\documentclass[12pt,reqno] {amsart}%
\usepackage{amsfonts}
\usepackage{amsmath}
\usepackage{amssymb}
\usepackage{xypic,color}
\usepackage{graphicx}%
\setcounter{MaxMatrixCols}{30}
\setlength{\hoffset}{-0.8in}
\setlength{\textwidth}{6.75in} \theoremstyle{plain}

\newtheorem{problem}{Problem}

\newtheorem{theorem}{Theorem}[section]
\newtheorem{conjecture}{Conjecture}
\newtheorem{corollary}[theorem]{Corollary}
\newtheorem{definition}[theorem]{Definition}
\newtheorem{lemma}[theorem]{Lemma}
\newtheorem{proposition}[theorem]{Proposition}
\newtheorem{remark}[theorem]{Remark}
\newtheorem{example}[theorem]{Example}

\def\bn{\begin{definition}}
\def\en{\end{definition}}
\def\ba{\begin{array}}
\def\ea{\end{array}}
\def\be{\begin{equation}}
\def\ee{\end{equation}}
\def\bd{\begin{description}}
\def\ed{\end{description}}
\def\bu{\begin{enumerate}}
\def\eu{\end{enumerate}}
\def\bi{\begin{itemize}}
\def\ei{\end{itemize}}

\newcommand{\A}{\mathcal A}

\newcommand{\B}{\mathcal B}




\newcommand{\bea}{\begin{eqnarray}}
\newcommand{\eea}{\end{eqnarray}}
\newcommand{\bes}{\begin{equation*}}
\newcommand{\ees}{\end{equation*}}
\newcommand{\beas}{\begin{eqnarray*}}
\newcommand{\eeas}{\end{eqnarray*}}

\def\C{\mathbb{C}}

\def\N{\mathbb{N}}

\def\cA{\mathcal{A}}

\def\ds{\displaystyle}

\def\<{\langle}
\def\>{\rangle}
\allowdisplaybreaks
\usepackage[colorlinks,
            linkcolor=black,
            anchorcolor=black,
            citecolor=black,
            ]{hyperref}
\begin{document}
\title[]{Disjointness of M\"{o}bius from asymptotically periodic functions}

\author[]{Fei Wei}

\address{Yau Mathematical Sciences Center, Tsinghua University, Beijing 100084, China}

\email{weif@mail.tsinghua.edu.cn}

\maketitle

\begin{abstract}
We investigate Sarnak's M\"obius Disjointness Conjecture through asymptotically periodic functions. It is shown that Sarnak's conjecture for rigid dynamical systems is equivalent to the disjointness of M\"obius from asymptotically periodic functions. We give sufficient conditions and a partial answer to the later one. As an application, we show that Sarnak's  conjecture holds for a class of rigid dynamical systems, which improves an earlier result of Kanigowski-Lema{\'{n}}czyk-Radziwi{\l}{\l}.\\
\textsc{Keywords}. Asymptotically periodic function, mean state, M\"{o}bius function, Sarnak's M\"{o}bius Disjointness Conjecture.\\
\textsc{MSC classes}. 37A55, 11N37
\end{abstract}
\section{Introduction}
Let $\N=\{0,1,2,\ldots\}$ denote the set of natural numbers and $\N^*=\{1,2,\ldots\}$. Functions from $\N$ (or $\N^*$) into $\C$ are called \emph{arithmetic functions}. Many problems in number theory can often be reformulated in terms of properties of arithmetic functions. For example, the M\"obius function $\mu(n)$ is defined by 0 if $n$ is not square free (i.e., divisible by a nontrivial square), and $(-1)^{r}$ if $n$ is the product of $r$ distinct primes.
It is well known that the Prime Number Theorem is
equivalent to that $\sum_{n\leq x}\mu(n)=o(x)$; the Riemann Hypothesis holds if and only if $\sum_{n\leq x}\mu(n)=o(x^{\frac{1}{2}+\epsilon})$,
for any $\epsilon>0$.

An arithmetic function $f$ is said to be \emph{disjoint} from another one $g$ if $\sum_{n=1}^{N}f(n)
\overline{g}(n)=o(N)$. Disjointness is a commonly concerned relation between arithmetic functions.
The disjointness of M\"obius from arithmetic functions plays an important role in number theory since they reflect certain random distribution among the values of the M\"obius function and are closely related to the distribution of primes. For example, the disjointness of $\mu(n)$ from periodic functions is equivalent to the prime number theorem in arithmetic progressions. Sarnak  (\cite{Sar}) conjectured that the M\"obius function is disjoint from all arithmetic functions arising from
any topological dynamical systems with zero topological entropy. More specifically,
\begin{conjecture}[Sarnak's M\"{o}bius Disjointness Conjecture (SMDC)]
Let $X$ be a compact Hausdorff space and $T$ a continuous map on $X$ with zero topological entropy, then
\[\lim_{N\rightarrow \infty}\frac{1}{N}\sum_{n=1}^{N}\mu(n)f(T^{n}x_{0})=0\]
for any $x_{0}\in X$ and $f\in C(X)$.
\end{conjecture}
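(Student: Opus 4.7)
The plan is to combine the Bourgain--Sarnak--Ziegler (BSZ) orthogonality criterion with a structural analysis of zero-entropy topological systems. Fix $f\in C(X)$ and $x_0\in X$, and set $a_n:=f(T^nx_0)$. By BSZ, to obtain the desired vanishing it suffices to prove that for any two distinct primes $p\neq q$,
\[
\lim_{N\to\infty}\frac{1}{N}\sum_{n\le N} f(T^{pn}x_0)\,\overline{f(T^{qn}x_0)} = 0,
\]
with quantitative uniformity as $p,q$ range over a growing set of primes. Thus I would reduce SMDC to a \emph{prime-orbit decorrelation} estimate on $(X,T)$ under the sole hypothesis $h_{\mathrm{top}}(T)=0$.

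Next I would decompose $f$ via a Jacobs--de~Leeuw--Glicksberg-style splitting into an almost-periodic part $f_{\mathrm{ap}}$, coming from the maximal equicontinuous (Kronecker) factor of $(X,T)$, and a complementary part $f_{\mathrm{wm}}$ which is orthogonal, in an appropriate topological sense, to every eigenfunction of $T$. On the $f_{\mathrm{ap}}$ side, each orbit closure is conjugate to a rotation on a compact abelian group, and the BSZ correlations reduce to sums of the form $\frac{1}{N}\sum_{n\le N}\mu(n)\chi(n\theta)$ for continuous characters~$\chi$. These are handled by Davenport's classical estimate
\[
\sum_{n\le N}\mu(n)\,e^{2\pi i n\theta}=O_A\!\bigl(N(\log N)^{-A}\bigr),
\]
uniform in $\theta$. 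This part of the argument is essentially classical and does not use the zero-entropy hypothesis.

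The substantive work concerns $f_{\mathrm{wm}}$. Here I would attempt to build a topological analogue of the Host--Kra tower of characteristic factors: iteratively isolate pro-nilpotent factors $Z_k(X,T)$ and, on each, invoke the Green--Tao Möbius--nilsequence orthogonality theorem to dispose of the projection of $f_{\mathrm{wm}}$ onto $Z_k$. One then hopes that zero topological entropy forces the tower to exhaust $X$, or at least forces the residual projection onto the "infinite-step" limit to contribute nothing to the BSZ correlations above --- essentially because positive contribution there would manifest as a form of randomness incompatible with $h_{\mathrm{top}}(T)=0$.

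The main obstacle is precisely this last step. Zero topological entropy does \emph{not} by itself imply a quantitative decay of correlations, nor does it rule out the existence of topologically weakly mixing, non-nilpotent rigid phenomena in $(X,T)$ for which no current method produces the BSZ decorrelation. This is the reason SMDC remains open in full generality: every known verification (nilsystems via Green--Tao, horocycle flows via Bourgain--Sarnak--Ziegler, rank-one transformations, analytic skew products, and the asymptotically periodic class treated in the present paper) exploits additional structure beyond zero entropy to force the decorrelation. Absent a new structural theorem for general zero-entropy topological systems, this plan delivers the conjecture only conditional on such an input; supplying that input is where I expect the genuine difficulty to lie.
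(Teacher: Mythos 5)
This statement is Sarnak's M\"obius Disjointness Conjecture itself (Conjecture~1.1), which is precisely the open problem motivating the paper; the paper does not prove it and explicitly says it ``is still open in its most general form.'' You are therefore not being asked to reproduce a proof that exists, and your proposal, to its credit, ends by acknowledging exactly this. The BSZ criterion plus a Jacobs--de~Leeuw--Glicksberg / Host--Kra-tower reduction is a reasonable template, and it is indeed the template behind several known special cases (nilsystems, horocycle flows), but you correctly locate the unbridgeable gap: zero topological entropy alone does not supply the prime-orbit decorrelation that BSZ requires, nor is there a structural theorem forcing the nilpotent tower to exhaust a general zero-entropy system. No argument in the present paper closes this gap either; the paper instead retreats to strictly stronger hypotheses --- strong asymptotic periodicity, countability of $X$, rigidity conditions of the form~(\ref{APR}) --- and deduces the conclusion for those subclasses via the second-moment bound of Theorem~\ref{main estimate} rather than via BSZ. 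So the verdict is: your plan is not a proof, you know it is not a proof, and the missing ingredient you name (a decorrelation input beyond $h_{\mathrm{top}}(T)=0$) is exactly what the field lacks. If you want your write-up to track what the paper actually accomplishes toward Conjecture~1.1, replace the BSZ/nilfactor route with the paper's own reduction: show the observable $n\mapsto f(T^nx_0)$ is (strongly) asymptotically periodic, invoke Theorem~\ref{main estimate} to make $\frac{1}{h}\sum_{l\le h}A^{lk}\mu$ small in $\mathcal H_E$, and conclude by the Cauchy--Schwarz argument of Section~\ref{proof of two theorems}; this works only for the restricted classes covered by Theorems~\ref{main theorem}, \ref{main theorem 1}, and~\ref{comparison result}, not for arbitrary zero-entropy $(X,T)$.
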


In recent years, a lot of progress have been made on Conjecture 1. See \cite{JB,JPT,FJ,FH,GT,GW,HWY,Kan,JP,LOZ,Pec,Vee,Wang,W,xu}, to list a few. In the following, we shall discuss only the results that are more related to this paper. Sarnak proved that SMDC is implied by Chowla's conjecture which is stated as follows \cite{Ch}.

\begin{conjecture}[Chowla's conjecture]
Let $a_{0},a_{1},a_{2},\ldots,a_{m}$ be distinct natural numbers, and $i_{s}\in \{1,2\}$ for $s=0,1,2,\ldots,m$, not all  $i_{s}$ are even numbers. Then
\[\lim_{N\rightarrow \infty}\frac{1}{N}\sum_{n=1}^{N}\mu^{i_{0}}(n+a_{0})\mu^{i_{1}}(n+a_{1})\cdot\cdot\cdot\mu^{i_{m}}(n+a_{m})=0.\]
\end{conjecture}
Chowla's conjecture is a longstanding open problem in number theory. It is open even in one of its simplest forms: $\sum_{n=1}^{N}\mu(n)\mu(n+2)=o(N)$. This estimate should be closely related to the twin prime conjecture.

\subsection{Asymptotically periodic functions} In order to use tools from operator algebra to study Sarnak's conjecture, Ge introduced the following notion of asymptotically periodic function in the survey paper \cite{LG} and proved that the  M\"{o}bius function is disjoint from certain asymptotically periodic functions.
\begin{definition} \label{def:asymptotically periodic}
A function $f\in l^{\infty}(\mathbb{N})$ is called \textbf{asymptotically periodic}\footnote{This definition is a little weaker than \cite[Definition 5.7]{LG}, in which the sequence $\{n_{j}\}_{j=1}^{\infty}$ is independent of $E$.}
if for any mean state $E$, there is a sequence $\{n_{j}\}_{j=1}^{\infty}$ of positive integers such that $f-A^{n_{j}}f$ has limit zero in $\mathcal{H}_{E}$.
\end{definition}
In the above definition, the action $A$ on $l^{\infty}(\mathbb{N})$, the algebra of all bounded arithmetic functions endowed with the pointwise addition and multiplication, is defined as
\begin{equation}\label{20110114formula}
Af(n)=f(n+1),
\end{equation}
for all $f\in l^{\infty}(\mathbb{N})$ and $n\in \mathbb{N}$. The mean states $E$ on $l^{\infty}(\mathbb{N})$ are given by certain limits of $\frac{1}{N}\sum_{n=0}^{N-1}f(n)$ along ``ultrafilters" and $\mathcal{H}_{E}$ is the Hilbert space obtained by the GNS construction on $l^\infty(\N)$ with respect
to $E$. We refer readers to Section \ref{invariant means} for more details.

In this paper, we further study properties of asymptotically periodic functions and the M\"obius disjointness of asymptotically periodic functions, and give
some applications of these results to Sarnak's conjecture. We first introduce the following subclass of asymptotically periodic functions.
\begin{definition} \label{def:strongly asymptotically periodic}
A function $f\in l^{\infty}(\mathbb{N})$ is called \textbf{strongly asymptotically periodic} if for any mean state $E$, there is a sequence $\{n_{j}\}_{j=1}^{\infty}$ of positive integers such that
when $j$ goes to infinity, $f-A^{ln_{j}}f$ converges to zero in $\mathcal{H}_{E}$ uniformly with respect to all $l\in\N$.
\end{definition}
Here are some examples. The function $e^{2\pi i\sqrt{n}}$ is a strongly asymptotically periodic function. For any strictly increasing sequence $\{N_{j}\}_{j=0}^{\infty}$ ($N_{0}=0$) and any bounded sequence $\{a_{j}\}_{j=0}^{\infty}$ of complex numbers, define $f(n)=a_{j}$ when $N_{j}\leq n<N_{j+1}$. Then $f$ is strongly asymptotically periodic. If $\theta$ is an irrational number, then $e^{2\pi in\theta}$ is an asymptotically periodic function but not in the strong sense. The function $e^{2\pi i n^2\theta}$ with $\theta$ irrational is disjoint from all asymptotically periodic functions. These results and more examples of strongly asymptotically periodic functions are shown in Section \ref{invariant means}.

In \cite{Ebe}, Eberlein introduced the notion of weakly almost periodic (WAP) functions. These functions have been studied in dynamical systems (see e.g., \cite{GM,Vee2}). Moreover, all these functions can be realized in topological dynamical systems with zero topological entropy (see \cite[Theorem 9.1]{Vee}). We shall show that WAP functions belong to the class of asymptotically periodic functions satisfying conditions (\ref{restriction3/1}) and (\ref{asymptotical periodicity3/1}) below, see Proposition \ref{WAP is asymptotically periodic function}.

Interestingly, there are strongly asymptotically periodic functions that cannot be realized in any topological dynamical system with zero topological entropy. In the following we give an example to illustrate it.
\begin{proposition}\label{non zero entropy example}
Suppose $s\geq 1$ and $m_{1},\ldots,m_{s}\in \mathbb{N}$ with at least one $m_{i}\geq 1$. Let $f=A^{m_{1}}(\mu^2)\cdots A^{m_{s}}(\mu^2)$. Then $f(n)$ is a strongly asymptotically periodic functions. Moreover, $f(n)$ cannot be realized in any topological dynamical system with zero topological entropy, i.e., there does not exist a topological dynamical system $(X,T)$ such that the topological entropy of $T$ is zero and $f(n)=F(T^{n}x_{0})$ for some $F\in C(X)$ and $x_{0}\in X$.
\end{proposition}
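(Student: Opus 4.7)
For strong asymptotic periodicity, the plan is to approximate $\mu^2$ by a bounded periodic function. For each prime cutoff $P$, set
\[h_P(n) := \prod_{p \le P} \mathbf{1}[p^2 \nmid n],\]
which is $\{0,1\}$-valued and periodic with period $\Pi_P := \prod_{p \le P} p^2$. Since $h_P(n) \neq \mu^2(n)$ only when some prime $p > P$ satisfies $p^2 \mid n$, the upper density of the exceptional set is at most $\sum_{p > P} 1/p^2 = O(1/P)$, and since the difference is $\{0,1\}$-valued this gives $\|\mu^2 - h_P\|_{\mathcal{H}_E}^2 = O(1/P)$ for every mean state $E$. With $F_P := \prod_{i=1}^{s} A^{m_i} h_P$ (periodic of period dividing $\Pi_P$, bounded by $1$), the telescoping inequality $|\prod a_i - \prod b_i| \le \sum |a_i - b_i|$ valid when $|a_j|, |b_j| \le 1$ yields $\|f - F_P\|_{\mathcal{H}_E} \le s\|\mu^2 - h_P\|_{\mathcal{H}_E} \to 0$. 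Since $E$ is shift-invariant, $A$ is an isometry on $\mathcal{H}_E$, and combined with $A^{l\Pi_P} F_P = F_P$ for every $l \in \N$ this gives
\[\|f - A^{l\Pi_P} f\|_{\mathcal{H}_E} \le \|f - F_P\|_{\mathcal{H}_E} + \|A^{l\Pi_P}(F_P - f)\|_{\mathcal{H}_E} = 2\|f - F_P\|_{\mathcal{H}_E},\]
uniformly in $l$. Setting $n_j := \Pi_j$ completes the first half.

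For non-realizability the key reduction is that if $f(n) = F(T^n x_0)$ for some topological dynamical system $(X, T)$, the map $\Phi(x) := (F(T^n x))_{n \in \N}$ is a continuous shift-equivariant factor map from $\overline{\{T^n x_0\}}$ onto the shift-orbit closure $X_f := \overline{\{A^k f : k \ge 0\}} \subseteq \{0,1\}^{\N}$, whence $h_{\mathrm{top}}(T) \ge h_{\mathrm{top}}(X_f)$. It therefore suffices to prove $h_{\mathrm{top}}(X_f) > 0$. Define the continuous shift-equivariant map $\pi : \{0,1\}^{\N} \to \{0,1\}^{\N}$ by $\pi(x)(n) = x(n+m_1)\cdots x(n+m_s)$; then $\pi(\mu^2) = f$, so $X_f = \pi(X_{\mu^2})$. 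I will invoke the classical characterization of the square-free flow (Sarnak, Cellarosi--Sinai): every $y \in \{0,1\}^{\N}$ that vanishes on the non-square-free integers belongs to $X_{\mu^2}$.

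To lower-bound $|B_N(X_f)|$, put $\tilde Z := \{n \in \N : n + m_i \text{ is square-free for every } i\}$; a standard sieve bound yields $|\tilde Z \cap [0, N-1]| \ge (\beta - o(1)) N$ with $\beta := \prod_p (1 - s(p)/p^2) > 0$, where $s(p)$ counts distinct residues $-m_i \bmod p^2$. Choose a $D$-separated subset $S \subseteq \tilde Z \cap [0, N-1]$ with $D := \max_i m_i + 1$; then the sets $\{n + m_i : i = 1,\dots,s\}$ for distinct $n \in S$ are pairwise disjoint and $|S| \ge \beta N / (2D)$ for large $N$. For each $T \subseteq S$ define $y_T \in \{0,1\}^{\N}$ by setting $y_T(n + m_i) := 1$ for $n \in T$ and $i = 1,\dots,s$; $y_T(n + m_1) := 0$ for $n \in S \setminus T$; $y_T := 0$ at every non-square-free integer; and $y_T$ arbitrary elsewhere. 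The disjointness makes this consistent, and $T \subseteq \tilde Z$ ensures the forced $1$'s land at square-free positions, so $y_T \in X_{\mu^2}$. A direct computation gives $\pi(y_T)(n) = \mathbf{1}_{\{n \in T\}}$ for every $n \in S$, so the $2^{|S|}$ blocks $\pi(y_T)|_{[0, N-1]}$ are pairwise distinct, giving
\[h_{\mathrm{top}}(X_f) \ge \frac{|S|}{N}\log 2 \ge \frac{\beta}{2D}\log 2 > 0.\]

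The principal obstacle is the characterization of the square-free flow used above: the inclusion ``admissible $\Rightarrow X_{\mu^2}$'' is a known consequence of a CRT-plus-sieve construction in which, given any prescribed pattern of forbidden $p^2$-residues, one produces a positive-density set of shifts of $\mu^2$ realizing it. Once this is cited, everything else is routine: sieve asymptotics for $|\tilde Z \cap [0,N-1]|$, the shift-invariance of mean states, and elementary counting.
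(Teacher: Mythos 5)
Your proof follows the same overall architecture as the paper's (Proposition 1.5) but differs in the details of both halves, and in one respect is more explicit and rigorous. For strong asymptotic periodicity, the paper invokes Theorem 4.9 (reduce to $\mu_r$, then use Mirsky's exact asymptotic $\langle\mu_r,A^m\mu_r\rangle_E=\prod_p(1-2/p^r)\prod_{p^r\mid m}(1+1/(p^r-2))$ to show the correlation along $n_j=(p_1\cdots p_j)^r$ is monotone in $l$); you instead approximate $\mu^2$ directly by the finitely-truncated indicator $h_P$, which is genuinely periodic, and handle the product by telescoping. Your route is more elementary and gives exactly the uniformity in $l$ one needs, via exact periodicity of $F_P$. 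For non-realizability both proofs pass to the anqie $X_f$ and use entropy monotonicity under factors (the paper's Proposition 3.8, your $\Phi$); but where the paper asserts positive entropy of $X_f$ with a bare ``it follows from [Sar]'', you actually supply a lower bound by pushing a combinatorial family $\{y_T\}$ of admissible configurations through $\pi$. This is a real improvement in explicitness: $X_f$ is a factor of the square-free flow $X_{\mu^2}$, and factors can have lower entropy, so the paper's citation alone does not establish positivity.

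There is, however, a concrete gap in your proof --- one that the paper itself shares. You write $\beta:=\prod_p(1-s(p)/p^2)>0$ as a bald claim, but this fails whenever $\{m_1,\dots,m_s\}$ covers a complete residue system modulo $p^2$ for some prime $p$. For instance, with $\{m_1,\dots,m_4\}=\{0,1,2,3\}$ one has $s(2)=4=2^2$, so $\beta=0$; and indeed in this case every $n$ has some $n+m_i\equiv 0\pmod 4$, so $f\equiv 0$ (at least for $n\ge 1$), $X_f$ is a single point of zero entropy, and $f$ is trivially realized in a zero-entropy system. So Proposition 1.5 as stated is false without an extra hypothesis (e.g.\ that no $p^2$ is covered), and the place your argument silently encodes that hypothesis is exactly the assertion $\beta>0$. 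You should make the needed hypothesis explicit and prove $\beta>0$ from it (for $p^2>s$ it is automatic since $s(p)\le s$; for the finitely many small $p$ it is precisely the ``no covering'' condition). Everything else --- the sieve lower bound on $\tilde Z$, the $D$-separated selection, the consistency of the $y_T$ construction, the use of the admissibility characterization of the square-free subshift, and the final $2^{|S|}$ block count --- is sound.
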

\subsection{The M\"{o}bius disjointness of asymptotically periodic functions} We are interested in the following problem.
\begin{problem}\label{disjointness of mobius function with all asymptotically periodic functions}
Is $\mu$ disjoint from all asymptotically periodic functions?
\end{problem}
We now explain a motivation for us to investigate the above problem. The positive answer to Chowla's conjecture implies that the set $\{A^{n}\mu: n\geq 0\}$ is an orthogonal set
(with respect to a given mean state) of vectors of the same norm.
Denote the norm of $\mu$ as $c$. By Bessel's inequality, we have
\begin{equation} \label{eq}
\langle f,f\rangle\geq \frac1{c^2}\sum_{n\in\mathbb{N}}|\langle A^{n}\mu,f\rangle|^2
\end{equation}
for any $f\in l^\infty(\mathbb{N})$. Assume that $f$ is an asymptotically periodic function, then there is a sequence $\{n_j\}_{j=1}^{\infty}$ of distinct positive integers such that $\lim_{j\rightarrow \infty}|\langle A^{n_j}\mu,f-A^{n_{j}}f\rangle|=0$. This implies that $\lim_{j\rightarrow \infty}|\langle A^{n_j}\mu,f\rangle|=|\langle\mu,f\rangle|$,
then $\langle\mu,f\rangle=0$ by the inequality (\ref{eq}).

The process of exploring Problem \ref{disjointness of mobius function with all asymptotically periodic functions} motivates us to study the average value of the M\"{o}bius function in short arithmetic progressions. Precisely, we should estimate the second moment of this average: $\sum_{n=1}^{N}|\sum_{l=1}^{h}\mu(n+kl)|^2$. About this, we show the following result.
\begin{theorem}\label{main estimate}
Let $k$ be a positive integer. Then for any $h\geq 3$,
\begin{equation}\label{has relation with k}
\limsup_{N\rightarrow \infty}\frac{1}{N}\sum_{n=1}^{N}\Bigg|\sum_{l=1}^{h}\mu(n+kl)\Bigg|^2\ll \frac{k}{\varphi(k)}\frac{\log\log h}{\log h}h^2.
\end{equation}
\end{theorem}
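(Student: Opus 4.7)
My plan is to reduce the bound to the Matomäki--Radziwiłł theorem on averages of bounded multiplicative functions in short intervals. Expanding the square and making the change of variables $j=l_{2}-l_{1}$,
\[
\frac{1}{N}\sum_{n=1}^{N}\Bigl|\sum_{l=1}^{h}\mu(n+kl)\Bigr|^{2}
=\sum_{l_{1},l_{2}=1}^{h}\frac{1}{N}\sum_{n=1}^{N}\mu(n+kl_{1})\mu(n+kl_{2})
\sim \sum_{|j|<h}(h-|j|)\,C_{N}(kj),
\]
where $C_{N}(a)=N^{-1}\sum_{n\leq N}\mu(n)\mu(n+a)$, with a boundary error $O(kh^{2}/N)$ that vanishes in the limit. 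The diagonal $j=0$ contributes $h\,C_{N}(0)\sim 6h/\pi^{2}=O(h)$, which is negligible compared to the target bound. The real task is therefore to control the off-diagonal sum $\sum_{0<|j|<h}(h-|j|)C_{N}(kj)$.

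For the off-diagonal terms I would split the sum over $n$ according to $d=\gcd(n,k)$. Since $\gcd(n+kj,k)=d$ as well, the product $\mu(n)\mu(n+kj)$ vanishes unless $d$ is squarefree. For each squarefree $d\mid k$ I would write $n=dn'$ with $\gcd(n',k/d)=1$, and use the multiplicativity of $\mu$ together with Möbius inversion to eliminate the coprimality indicators $\gcd(n',d)=\gcd(n'+(k/d)j,d)=1$. This reduces the problem to bounding second moments of $\mu$ along arithmetic progressions of modulus $q=k/d$ with coprime residue class,
\[
\frac{1}{X}\sum_{\substack{n\leq X\\ \gcd(n,q)=1}}\Bigl|\sum_{l=1}^{h}\mu(n+ql)\Bigr|^{2},
\]
which I would attack through the Dirichlet-character expansion
\[
\sum_{l=1}^{h}\mu(n+ql)
=\frac{1}{\varphi(q)}\sum_{\chi\bmod q}\overline{\chi(n)}\sum_{m\in(n,n+qh]}\mu(m)\chi(m),
\]
applying the Matomäki--Radziwiłł theorem to each twisted multiplicative function $\mu\chi$, and invoking Parseval's identity (rather than Cauchy--Schwarz) on the character variable to obtain the crucial $1/\varphi(q)$ saving. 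Summing the resulting contributions over squarefree $d\mid k$, weighted by the density $\varphi(k/d)/k$ of $n\leq N$ with $\gcd(n,k)=d$, then produces the announced bound with the factor $k/\varphi(k)$.

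The principal technical obstacle is obtaining the sharp $k/\varphi(k)$ factor rather than a weaker constant such as $k$: a naive application of Cauchy--Schwarz on the character sum loses an extra factor of $\varphi(q)$, and the correct $1/\varphi(q)$ must come from Parseval combined with the uniformity of Matomäki--Radziwiłł across Dirichlet characters $\chi$ modulo $q$. A secondary difficulty is the combinatorial bookkeeping in the Möbius-inversion step when $d$ and $k/d$ share prime factors, where several overlapping coprimality conditions have to be unwound without inflating the count of auxiliary terms.
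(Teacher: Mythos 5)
Your plan is essentially the paper's: decompose by $d=\gcd(n,k)$, pass to progressions modulo $k/d$, expand in Dirichlet characters, apply a character-uniform Matom\"{a}ki--Radziwi{\l}{\l} bound, and recover $1/\varphi(q)$ via Parseval rather than Cauchy--Schwarz --- that last point is exactly where the paper also gains, by proving a hybrid mean-value theorem (the analogue of the classical large sieve) so that the sum over $\chi\bmod q$ is controlled as a whole. Two points of divergence. First, your detour through the autocorrelations $C_N(kj)$, extracting the diagonal and then re-assembling the off-diagonal into a second moment, is more circuitous than the paper's route, which never leaves the short-interval form: writing $n\equiv a\ (\mathrm{mod}\ k)$, sorting $a$ by $d=\gcd(a,k)$, and substituting $n=dm$ lands directly on
\[
\frac{1}{k}\sum_{d\mid k}d\sum_{\substack{a=1\\(a,k/d)=1}}^{k/d}\sum_{x=X/d}^{2X/d}\Bigl|\sum_{\substack{m=x\\ m\equiv a\ (\mathrm{mod}\ k/d)}}^{x+hk/d}\mu(m)1_{(m,k)=1}(m)\Bigr|^{2}+O(X),
\]
where the coprimality indicator is simply carried along as part of the multiplicative function $f(m)=\mu(m)1_{(m,k)=1}(m)$. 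This sidesteps entirely the M\"{o}bius-inversion bookkeeping you flagged as your secondary difficulty: there is no unwinding of overlapping coprimality conditions because Theorem \ref{main theorem2} is stated for a general $1$-bounded multiplicative function, and $f$ is one. Second, and this is the genuine gap in your sketch: applying a Matom\"{a}ki--Radziwi{\l}{\l} estimate to $f\chi$ only gives a useful bound once you have shown that $f\chi$ is far, in the Granville--Soundararajan pretentious distance, from every $n\mapsto n^{it}$; this is the content of Lemma \ref{the pretentious distance of the moebius function}, which bounds $\inf_{d\le k}M_k(f;d;X;2X)$ from below via a zero-free-region/Perron argument for $L(s,\chi)$. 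Without that verification the error term $\frac{M_k+1}{\exp(M_k)}$ in formula (\ref{average on multiplicative function}) is not small, and the whole estimate is vacuous. Your sketch treats that step as automatic, which it is not.
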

Throughout this paper, $f\ll g$ means that there is an absolute constant $c$, such that $|f|\leq c|g|$;
$f=g+O(h)$ means $f-g\ll h$. We use $\varphi(k)$ to denote the Euler totient function.

We are more concerned about whether the left hand side of formula (\ref{has relation with k}) is still $o(h^2)$ when $k$ is far large than $h$. The estimate presented in formula (\ref{has relation with k}) implies that this holds for $k$ as large as $\exp(h^{o(1)})$ since $k/\varphi(k)\ll \log\log k$. We expect that the right hand side of formula (\ref{has relation with k}) is $o(h^2)$ independent of $k\ge 1$. This is likely to be true because the positive answer to Chowla's conjecture implies that the left hand side of formula (\ref{has relation with k}) should be $\frac{6}{\pi^2}h$.

In  Theorem \ref{main estimate}, we can replace $\mu$ by non-pretentious 1-bounded multiplicative functions such as the Liouville functions and $\mu(n)\chi(n)$, where $\chi$ is a Dirichlet character, see Lemma \ref{main theorem2}. Moreover, we recently extended Theorem \ref{main estimate} to the case that $\mu(n)$ is replaced by $\mu(n)e(P(n))$ for any $P(x)\in \mathbb{R}[x]$ (\cite{W}), and this is possible to be generalized to $\mu(n)$ twisted by any nilsequence.

Using Theorem \ref{main estimate}, we give a partial answer to Problem \ref{disjointness of mobius function with all asymptotically periodic functions}, which states that $\mu(n)$ is disjoint from a class of asymptotically periodic functions. Precisely,
\begin{theorem}\label{disjointfromasymptoticallyperiodicfunctions3/1}
Let $f\in l^{\infty}(\mathbb{N})$ satisfying that for any mean state $E$, there are sequences $\{h_{j}\}_{j=1}^{\infty}$ and $\{n_{j}\}_{j=1}^{\infty}$ of positive integers with
\begin{equation}\label{restriction3/1}
\lim_{j\rightarrow \infty}\frac{\log\log h_{j}}{\log h_{j}}\frac{n_{j}}{\varphi(n_{j})}=0
\end{equation}
such that
\begin{equation}\label{asymptotical periodicity3/1}
\lim_{j\rightarrow \infty}\frac{1}{h_{j}}\sum_{l=1}^{h_{j}}E(|f-A^{ln_{j}}f|^2)=0.
\end{equation}
Then we have
\[\lim_{N\rightarrow \infty}\frac{1}{N}\sum_{n=1}^{N}\mu(n)f(n)=0.\]
\end{theorem}
By the definition of strongly asymptotically periodicity, it is not hard to check the following result as an application of Theorem \ref{disjointfromasymptoticallyperiodicfunctions3/1}.
\begin{corollary}
Problem \ref{disjointness of mobius function with all asymptotically periodic functions} holds for all strongly asymptotically periodic functions.
\end{corollary}
For solving Problem \ref{disjointness of mobius function with all asymptotically periodic functions} completely, we provide a sufficient condition as follows.
\begin{proposition}\label{investigation of Problem 1}
Assume that
\begin{equation}\label{condition independent of k}
\limsup_{N\rightarrow \infty}\frac{1}{N}\sum_{n=1}^{N}\Bigg|\sum_{l=1}^{h}\mu(n+kl)\Bigg|^2=o(h^2),
\end{equation}
where the little ``$o$'' term is independent of $k\ge 1$.
Then Problem \ref{disjointness of mobius function with all asymptotically periodic functions} holds.
\end{proposition}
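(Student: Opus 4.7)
The plan is to reduce the disjointness statement to the vanishing $\langle \mu, f\rangle_E = E(\mu\bar f) = 0$ for every mean state $E$ on $l^\infty(\N)$, and then establish this vanishing by a Bessel-type inequality argument in the spirit of the discussion around equation (\ref{eq}), with the hypothesis (\ref{condition independent of k}) supplying the requisite approximate orthogonality in place of Chowla. The reduction is routine: any limit point of $\frac{1}{N}\sum_{n=1}^N\mu(n)\overline{f(n)}$ along a subsequence $N_j$ is realized as $\langle \mu,f\rangle_E$ for the mean state coming from a free ultrafilter containing $\{N_j\}$, so $\langle \mu,f\rangle_E=0$ for every $E$ forces the limit to exist and equal $0$.

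Fix a mean state $E$ and $\epsilon>0$. By (\ref{condition independent of k}), with the key feature that the $o(h^2)$ term is uniform in $k$, there is an integer $h_0=h_0(\epsilon)$ such that
\[\Bigl\|\sum_{l=1}^h A^{lk}\mu\Bigr\|_E^2 \;=\; E\Bigl(\Bigl|\sum_{l=1}^{h}A^{lk}\mu\Bigr|^2\Bigr) \;\leq\; \epsilon\,h^2\qquad \text{for all } h\geq h_0 \text{ and all }k\geq 1.\]
Fix $h=h_0$. Since $f$ is asymptotically periodic, there is a sequence $\{n_j\}$ with $\|A^{n_j}f-f\|_E\to 0$; since $E$ is $A$-invariant the shift $A$ is an isometry on $\mathcal{H}_E$, and the telescoping identity $f-A^{lk}f=\sum_{i=0}^{l-1}A^{ik}(f-A^kf)$ gives $\|f-A^{lk}f\|_E\leq l\,\|f-A^kf\|_E$. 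Choose $j$ so large that $\|A^{n_j}f-f\|_E\leq \epsilon/h$ and set $k=n_j$; then $\|f-A^{lk}f\|_E\leq \epsilon$ for every $1\leq l\leq h$.

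Using the $A$-invariance of the inner product, $\langle A^{lk}\mu,A^{lk}f\rangle_E=\langle\mu,f\rangle_E$ for each $l$, so summing and splitting $A^{lk}f=f-(f-A^{lk}f)$ yields
\[h\,\langle \mu,f\rangle_E \;=\; \Bigl\langle \sum_{l=1}^h A^{lk}\mu,\ f\Bigr\rangle_E \;-\; \sum_{l=1}^h\langle A^{lk}\mu,\ f-A^{lk}f\rangle_E.\]
Cauchy--Schwarz bounds the first term by $\sqrt{\epsilon}\,h\,\|f\|_\infty$, and term-by-term Cauchy--Schwarz with $\|A^{lk}\mu\|_E\leq 1$ and $\|f-A^{lk}f\|_E\leq l\epsilon/h$ bounds the second term by $\epsilon h$. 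Dividing by $h$ gives $|\langle \mu,f\rangle_E|\leq \sqrt{\epsilon}\,\|f\|_\infty+\epsilon$, and letting $\epsilon\to 0$ completes the proof.

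The only delicate point---and the reason the hypothesis is phrased with an error uniform in $k$---is the order of quantifiers: one must fix $h$ first (depending only on $\epsilon$), and only afterwards choose the common difference $k=n_j$ (depending on $h$, $\epsilon$, and $E$). If the $o(h^2)$ term in (\ref{condition independent of k}) were allowed to deteriorate with $k$, the estimate $\|\sum_l A^{lk}\mu\|_E\leq \sqrt{\epsilon}\,h$ would be destroyed as soon as we take $k=n_j$ very large, and the argument would collapse. All other steps are soft: asymptotic periodicity for $E$, isometry of $A$ on $\mathcal{H}_E$, Cauchy--Schwarz, and extraction of the mean state realizing the limsup.
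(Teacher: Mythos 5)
Your proposal is correct and follows essentially the same line of reasoning as the paper's proof: use the uniform-in-$k$ bound from (\ref{condition independent of k}) to control $\|\frac{1}{h}\sum_{l=1}^h A^{lk}\mu\|_E$, telescope $f-A^{lk}f$ via the isometry of $A$ on $\mathcal H_E$ to exploit asymptotic periodicity, split $\langle\mu,f\rangle_E$ by $A$-invariance of the inner product, and finish with Cauchy--Schwarz. The paper phrases the argument as a proof by contradiction while you give a direct $\epsilon$-argument, but the key steps, the order of quantifiers (fix $h$ before $k=n_j$), and the role played by the uniformity of the little-$o$ are identical.
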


It is unknown if the converse of the above proposition is true. It is proved in Proposition \ref{the tested property of moebius} that the disjointness of M\"obius from all strongly asymptotically periodic functions is equivalent to that for any given $k$,
\[\limsup_{N\rightarrow \infty}\frac{1}{N}\sum_{n=1}^{N}\Bigg|\sum_{l=1}^{h}\mu(n+kl)\Bigg|^2=o(h^2).\]

Although many asymptotically periodic functions cannot be realized in topological dynamical systems with zero topological entropy, they can be approximated measure-theoretically by realizable functions (see Theorem \ref{zero entropy function approach}). This leads to the following result.
\begin{theorem}\label{main theorem 2}
Assume that Sarnak's M\"{o}bius Disjointness Conjecture is true, then Problem \ref{disjointness of mobius function with all asymptotically periodic functions} holds.
\end{theorem}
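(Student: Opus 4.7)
The plan is to reduce the disjointness statement to showing $E(\mu f) = 0$ for every mean state $E$ on $l^\infty(\N)$, then invoke the measure-theoretic approximation supplied by Theorem~\ref{zero entropy function approach} to replace $f$ by a function realizable in a zero-entropy dynamical system, where SMDC can be applied directly.

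First I would recall that every mean state has the form $E_\omega(g) = \lim_{N\to\omega}\frac{1}{N}\sum_{n=1}^{N} g(n)$ for some free ultrafilter $\omega$ on $\N^*$. Hence the required assertion $\frac{1}{N}\sum_{n=1}^{N} \mu(n) f(n) \to 0$ is equivalent to $E(\mu f) = 0$ for every mean state $E$, because the set of accumulation points of a bounded complex sequence coincides with its set of ultrafilter limits. So I fix an arbitrary mean state $E$ and aim to prove $E(\mu f) = 0$.

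Next, given $\varepsilon > 0$, Theorem~\ref{zero entropy function approach} supplies a function $g \in l^\infty(\N)$ with $\|f-g\|_{\HHH_E} < \varepsilon$ such that $g(n) = F(T^{n} x_{0})$ for some compact topological dynamical system $(X,T)$ of zero topological entropy, some $F \in C(X)$, and some $x_{0} \in X$. Applying SMDC to $(X,T,F,x_{0})$ gives $\frac{1}{N}\sum_{n=1}^{N}\mu(n) g(n) \to 0$, and since $E$ is an ultrafilter limit of Cesaro averages this yields $E(\mu g) = 0$. Combining this with Cauchy--Schwarz in $\HHH_E$ and $\|\mu\|_{\HHH_E} \le 1$,
\[
|E(\mu f)| \le |E(\mu g)| + |E(\mu (f - g))| \le 0 + \|\mu\|_{\HHH_E}\,\|f-g\|_{\HHH_E} \le \varepsilon.
\]
Letting $\varepsilon \to 0$ gives $E(\mu f) = 0$, as needed.

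The main obstacle is not in the short chain above but in Theorem~\ref{zero entropy function approach} itself, which has to produce the $\HHH_E$-approximation of an arbitrary asymptotically periodic function by functions realizable in zero-entropy topological systems. This is a substantive point, since by Proposition~\ref{non zero entropy example} the function $f$ itself need not be realizable in any zero-entropy system; one must instead construct, from the sequence $\{n_{j}\}$ furnished by Definition~\ref{def:asymptotically periodic}, a zero-entropy model which captures $f$ only up to arbitrarily small error in the GNS norm. Once that approximation theorem is granted, the proof of the present theorem reduces, as above, to Hilbert-space geometry of $\HHH_E$ together with SMDC and Cauchy--Schwarz.
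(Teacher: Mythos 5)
The reduction in your first paragraph is fine, but the second paragraph rests on a mis-reading of Lemma~\ref{zero entropy function approach}, and this is where the argument breaks.

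What the lemma actually supplies, given a strictly increasing sequence $\{N_m\}$ with $N_m\,|\,N_{m+1}$ along which the Ces\`aro averages converge on $\mathcal{A}_f$, is a finite-range function $g$ (whose anqie has zero entropy) together with a \emph{subsequence} $\{N_{m(l)}\}$ such that $\frac{1}{N_{m(l)}}\sum_{n\le N_{m(l)}}|f(n)-g(n)|<\epsilon$. This is an $L^1$-type approximation along a specific subsequence chosen by the lemma; it is \emph{not} the inequality $\|f-g\|_{\mathcal{H}_E}<\varepsilon$ for a prescribed mean state $E$. Your chain $|E(\mu f)|\le |E(\mu g)|+\|\mu\|_{\mathcal{H}_E}\|f-g\|_{\mathcal{H}_E}$ therefore uses a quantity you have no control over. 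A mean state $E=E_\omega$ is given by a free ultrafilter $\omega$, and there is no reason the lemma's output subsequence $\{N_{m(l)}\}$ lies in (or accumulates at) $\omega$; moreover $|f-g|^2$ and $\mu(f-g)$ are generally not in $\mathcal{A}_f$, so even a sequence capturing $E|_{\mathcal{A}_f}$ need not capture the value $E(|f-g|^2)$. For arbitrary $E$ you cannot conclude $\|f-g\|_{\mathcal{H}_E}$ is small.

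The paper avoids this by arguing contrapositively and never returning to an ultrafilter limit: assume the disjointness fails, extract a concrete increasing sequence $\{N_m\}$ with $N_m\,|\,N_{m+1}$ along which $\frac{1}{N_m}\big|\sum_{n\le N_m}\mu(n)f(n)\big|\ge c_0$, pass (via Proposition~\ref{metrizable}) to a further subsequence defining an $A$-invariant state on $\mathcal{A}_f$, then apply Lemma~\ref{zero entropy function approach} \emph{to that sequence}. The lemma's output subsequence is then a subsequence of the "bad" sequence, so the lower bound $\ge c_0$ still holds along it, SMDC applied to $(X_g,\sigma_A)$ gives $\frac{1}{N_{m(l)}}\sum\mu(n)g(n)\to 0$, and the $L^1$-closeness along the same subsequence yields a contradiction — all computations happening along one sequence, never through an abstract $E$. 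If you want to keep your ultrafilter framing, you would first need to replace the ultrafilter by a sequence along which the averages of $\mu f$ stay bounded away from $0$, impose the divisibility condition (possible by nudging to nearby multiples), and then feed exactly that sequence into the lemma; as written, your step quoting Theorem~\ref{zero entropy function approach} claims more than it gives.
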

\subsection{Applications to Sarnak's conjecture for rigid dynamical systems}
Before introducing more results, we first recall the definition of rigid dynamical system. Let $(X,\mathcal{B},\nu,T)$ be a measure-preserving dynamical system, i.e., $X$ is a compact metric space, $T$ a continuous map on $X$, $\mathcal{B}$ the Borel $\sigma$-algebra of subsets of $X$ and $\nu$ a $T$-invariant Borel probability measure. Such a dynamical system is called \emph{rigid} if there is a sequence $\{n_{j}\}_{j=1}^{\infty}$ of positive integers such that for any $f\in L^2(X,\nu)$,
\[\lim_{j\rightarrow \infty}\|f\circ T^{n_{j}}-f\|_{L^2(\nu)}^2=0.\]
Rigid dynamical systems contain dynamical systems with discrete spectrum and a large class of skew products on the torus over a rotation of the circle \cite{Anzai2}. In the following for simplicity,
we use $(X,\nu,T)$ to denote a measure-preserving dynamical system.

From the viewpoint of dynamical systems, asymptotically periodic functions correspond to rigid measure-preserving dynamical systems (see Theorem \ref{A is rigid}). The major tool we use to build this connection between arithmetics and dynamics is anqie (of natural numbers), which was introduced by Ge in \cite{LG}. We refer readers to Section \ref{section:anqies} for knowledge on anqie. Based on this connection, corresponding to Problem \ref{disjointness of mobius function with all asymptotically periodic functions}, it is natural to consider the following problem.
\begin{problem}[Sarnak's conjecture for rigid dynamical systems]\label{rigid problem}
{\rm Let $X$ be a compact metric space and $T$ a continuous map on $X$. Suppose that $x_{0}\in X$ satisfies the following condition: for any $\nu$ in the weak* closure of $\{\frac{1}{N}\sum_{n=0}^{N-1}\delta_{T^{n}x_{0}}: N=1,2,\ldots\}$ in the space of Borel probability measures on $X$, there is a dense set $\mathcal{F}\subseteq C(X)$, such that for each $g(x)\in\mathcal{F}$ we can find a sequence $\{n_{j}\}_{j=1}^{\infty}$ (may depend on $\nu$, $g$) of positive integers satisfying
\begin{equation}\label{GAPR}
\lim_{j\rightarrow\infty}\|g\circ T^{n_{j}}-g\|_{L^2(\nu)}^2=0.
\end{equation}
Is it true that for any $f\in C(X)$,
\[\lim_{N\rightarrow \infty}\frac{1}{N}\sum_{n=1}^{N}\mu(n)f(T^{n}x_{0})=0?\]}
\end{problem}

\begin{proposition}\label{investigation of Problem 2}
Problem \ref{disjointness of mobius function with all asymptotically periodic functions} holds if and only if Problem \ref{rigid problem} holds.
\end{proposition}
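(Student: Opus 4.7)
My plan is to prove both directions of the equivalence by building a dictionary between asymptotically periodic functions on $\mathbb{N}$ and topological dynamical systems satisfying the rigidity hypothesis of Problem~\ref{rigid problem}. The bridge is that every mean state $E$ on $l^\infty(\mathbb{N})$, being an ultrafilter limit of Ces\`{a}ro means, corresponds canonically to an invariant measure $\nu$ lying in the weak* closure of an orbit's empirical measures, under which the GNS norm on $\mathcal{H}_E$ intertwines with the $L^2(\nu)$-norm of continuous pullbacks; this is essentially the content already exploited in Theorem~\ref{A is rigid}.

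For the direction Problem~\ref{disjointness of mobius function with all asymptotically periodic functions} $\Rightarrow$ Problem~\ref{rigid problem}, I take $(X,T)$ and $x\in X$ as in Problem~\ref{rigid problem}, fix $f\in C(X)$, and aim to show that $h(n):=f(T^n x)$ is asymptotically periodic. Given a mean state $E$, I set $\nu(g):=E(n\mapsto g(T^n x))$; this $\nu$ is $T$-invariant and belongs to the weak* closure of $\{\tfrac{1}{N}\sum_{n=0}^{N-1}\delta_{T^n x}\}$, so the rigidity hypothesis supplies a dense $\mathcal{F}\subseteq C(X)$ with sequences satisfying \eqref{GAPR}. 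Approximating $f$ within $\epsilon$ by $g_\epsilon\in\mathcal{F}$ and using shift-invariance of $E$ (which makes $A$ an isometry of $\mathcal{H}_E$), the triangle inequality yields a bound of the form $\|h-A^{n_j(g_\epsilon)}h\|_{\mathcal{H}_E}\le 2\epsilon+\|g_\epsilon\circ T^{n_j(g_\epsilon)}-g_\epsilon\|_{L^2(\nu)}$, whose last term is $o_j(1)$. A standard diagonal extraction along $\epsilon_k\downarrow 0$ then produces a single sequence $\{n_j\}$ with $\|h-A^{n_j}h\|_{\mathcal{H}_E}\to 0$, confirming that $h$ is asymptotically periodic, and Problem~\ref{disjointness of mobius function with all asymptotically periodic functions} closes the direction.

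For the reverse direction Problem~\ref{rigid problem} $\Rightarrow$ Problem~\ref{disjointness of mobius function with all asymptotically periodic functions}, I realise an asymptotically periodic $f$ as an orbit: with $C:=\|f\|_\infty$ and $B$ the one-sided shift, let $X_f:=\overline{\{B^n f:n\ge 0\}}\subseteq\{z\in\mathbb{C}:|z|\le C\}^{\mathbb{N}}$ and $F_0(g):=g(0)\in C(X_f)$, so $f(n)=F_0(B^n f)$. To verify the rigidity hypothesis of Problem~\ref{rigid problem} for $(X_f,B,f)$, I take $\nu$ any weak* limit measure and extend its defining subsequence along an ultrafilter to obtain a mean state $E_\nu$; asymptotic periodicity of $f$ supplies $\{n_j\}$ with $\|f-A^{n_j}f\|_{\mathcal{H}_{E_\nu}}\to 0$. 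I then choose $\mathcal{F}$ to be the dense (by Stone--Weierstrass) subalgebra of cylinder functions, i.e.\ polynomials in the coordinate evaluations $F_k(g):=g(k)$. Shift-invariance of $E_\nu$ immediately promotes the rigidity from $f$ to every $A^k f$, which unravels to $\|F_k-F_k\circ B^{n_j}\|_{L^2(\nu)}\to 0$; uniform boundedness of the $F_k$ and the triangle inequality then propagate \eqref{GAPR} to every polynomial in the $F_k$ along the \emph{same} sequence $\{n_j\}$. Problem~\ref{rigid problem} now applies and yields the disjointness of $\mu$ from $F_0(B^n f)=f(n)$. The main obstacle — and the essential shared step in both directions — is setting up the correspondence $E\leftrightarrow\nu$ cleanly enough that the identity $\|h-A^{n_j}h\|_{\mathcal{H}_E}^2=\|H\circ T^{n_j}-H\|_{L^2(\nu)}^2$ (for $h=H\circ T^{(\cdot)}x$, $H\in C(X)$) holds on the nose, since only then can a rigidity sequence be transferred intact between the arithmetic and dynamical sides; once this dictionary is in hand, both implications reduce to the approximation arguments sketched above.
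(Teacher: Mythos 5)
Your proposal follows essentially the same route as the paper's proof: in both directions you build the dictionary between a mean state $E$ and a $\sigma_A$-invariant measure $\nu$ via the anqie/shift realization (Theorem~\ref{the measure induced by a state}, Theorem~\ref{the method to compute the anqie}, Proposition~\ref{a factor}), show that rigidity of $(X,\nu,T)$ for a dense $\mathcal{F}$ is equivalent to asymptotic periodicity of the realized arithmetic function, and invoke Theorem~\ref{A is rigid}-type reasoning to propagate the rigidity sequence from the generator to all of $C(X_f)$. Your write-up is, if anything, a bit more careful than the paper's at two points it glosses over: the diagonal extraction needed to pass from the dense $\mathcal{F}$ to the given $f\in C(X)$ (the paper applies~\eqref{GAPR} directly to $f$), and the explicit choice of cylinder-function algebra as $\mathcal{F}$ on the shift side (the paper simply takes $\mathcal{F}=C(X_h)$).
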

When $\nu$ satisfies the condition in Problem \ref{rigid problem}, $(X,T,\nu)$ is rigid and then has zero measure-theoretic entropy (see e.g., \cite[Example 5.3.3]{KP7}), while $(X ,T)$ may not have zero topological entropy, see the paragraphs below Proposition \ref{asymptotically periodic has zeo metric entropy} for an example. Recently, in \cite{Kan}, Kanigowski, Lema{\'{n}}czyk and Radziwi{\l}{\l} gave a partial answer to Problem \ref{rigid problem}.
\begin{theorem}\cite[Theorem 2.1]{Kan}\label{polynomial rigidity}
With the same assumptions as Problem \ref{rigid problem}, if $T$ is a homeomorphism and for each $g(x)\in\mathcal{F}$ we can find a sequence $\{n_{j}\}_{j=1}^{\infty}$ (may depend on $\nu$, $g$) of positive integers satisfying either\\
{\rm(BPV rigidity)}: there is a constant $c>0$ such that $\sum_{p|n_{j}}\frac{1}{p}<c$ for any $j=1,2,\ldots$, and
\[\lim_{j\rightarrow\infty}\|g\circ T^{n_{j}}-g\|_{L^2(\nu)}^2=0.\]
or\\
{\rm(PR rigidity)}: for some $\delta>0$, the following holds:
\[\lim_{j\rightarrow\infty}\sum_{l=-n_{j}^{\delta}}^{n_{j}^{\delta}}\|g\circ T^{ln_{j}}-g\|_{L^2(\nu)}^2=0.\]
Then for any $f\in C(X)$,
\[\lim_{N\rightarrow \infty}\frac{1}{N}\sum_{n=1}^{N}\mu(n)f(T^{n}x_{0})=0.\]
\end{theorem}
Employing the estimate we obtained in Theorem \ref{main estimate}, we improve the above theorem to the following.
\begin{theorem}\label{comparison result}
With the same assumptions as Problem \ref{rigid problem}, if $T$ is a continuous map and for each $g(x)\in\mathcal{F}$, there are sequences $\{h_{j}\}_{j=1}^{\infty}$ and $\{n_{j}\}_{j=1}^{\infty}$ of positive integers with
\begin{equation}\label{restriction1}
\lim_{j\rightarrow \infty}\frac{\log\log h_{j}}{\log h_{j}}\frac{n_{j}}{\varphi(n_{j})}=0
\end{equation}
satisfying
\begin{equation}\label{asymptotical periodicity1}
\lim_{j\rightarrow \infty}\frac{1}{h_{j}}\sum_{l=1}^{h_{j}}\|g\circ T^{ln_{j}}-g\|_{L^2(\nu)}^2=0.
\end{equation}
Then for any $f\in C(X)$,
\[
\lim_{N\rightarrow \infty}\frac{1}{N}\sum_{n=1}^{N}\mu(n)f(T^{n}x_{0})=0.
\]
Moreover, the above disjointness also holds over short intervals in average, that is
\[\lim_{h\rightarrow \infty}\limsup_{N\rightarrow \infty}\frac{1}{Nh}\sum_{n=1}^{N}\Big|\sum_{l=1}^{h}\mu(n+l)f(T^{n+l}x_{0})\Big|=0.\]
\end{theorem}
In comparison with Theorem \ref{polynomial rigidity}, we relax $T$ to a continuous map. Also both BPV rigidity and PR rigidity are included in the scenario described by conditions (\ref{restriction1}), (\ref{asymptotical periodicity1}). See Remark \ref{extending conditions} for details.
There are examples that satisfy conditions (\ref{restriction1}), (\ref{asymptotical periodicity1}), but not BPV rigidity and PR rigidity (see Remark \ref{comparison rate}). Indeed, we show that conditions (\ref{restriction1}), (\ref{asymptotical periodicity1}) hold for any $(X,\nu,T)$ with discrete spectrum in Proposition \ref{discre spectrum}, while the set of these dynamical systems
are not strictly contained in the set of rigid dynamical systems satisfying
BPV rigidity or PR rigidity (see Remark \ref{extending conditions519}).

Related to the above result, we recently proved that Sarnak's conjecture holds for product flows between rigid dynamical systems satisfying conditions in Theorem \ref{comparison result} and affine linear flows on compact abelian groups of zero topological entropy \cite{W}.

Our paper is organized as follows.
In Section \ref{section:preliminaries}, we list some frequently used notation, and prove some preliminary results.
In Section \ref{section:anqies}, we study properties of anqies and
describe the topological characterizations of anqies in terms of the generating arithmetic functions.
We perform the GNS constructions on anqies, and show some examples of asymptotically and strongly asymptotically periodic functions in Section \ref{invariant means}, where Proposition \ref{non zero entropy example} is proved.
In Section
\ref{mean states}, we study the
connections between arithmetic functions and measure-preserving dynamical systems.
In Section \ref{estimate of second moment}, we show the estimate about the self-correlations of the M\"obius stated in Theorem \ref{main estimate}.
In Section \ref{proof of two theorems},
we prove Theorems \ref{disjointfromasymptoticallyperiodicfunctions3/1}, \ref{main theorem 2}, and Proposition \ref{investigation of Problem 1}.
As applications, we prove Proposition \ref{investigation of Problem 2} and Theorem \ref{comparison result} in Section \ref{applications}.

This work arose as part of my Ph.D. thesis at the Chinese Academy of Sciences \cite{FW} under the supervision of Professor Liming Ge. We refer to \cite{Co,KR} for basics and preliminary results in operator algebra, to \cite{Gl,IK} for that on topological dynamics and number theory.

{\bf Acknowledgements.}
I am indebted to my advisor Liming Ge for encouraging me in this research.
I am very grateful to Peter Sarnak  for insightful comments on the manuscript. I heartily thank Jinxin Xue for providing helpful comments and suggestions on the manuscript.
I would like to thank  Arthur Jaffe for his support, and Boqing Xue and Wei Yuan for valuable discussions.
This research was supported in part by the University of New Hampshire, by the AMSS of the Chinese Academy of Sciences, and by Grant TRT 0159 from the Templeton Religion Trust, and by the fellowship of China Postdoctoral Science Foundation 2020M670273.

\section{Preliminaries}\label{section:preliminaries}
In this section, we prove some preliminary results. First, we list some notation that will be used.

Let $\mathcal{H}$ be a Hilbert space. Denote by $\mathcal{B}(\mathcal{H})$ the algebra consists of all bounded linear operators on $\mathcal{H}$.
By Riesz representation theorem, for any $T\in\mathcal{B}(\mathcal{H})$,
there is a unique bounded linear operator $T^{*}$ satisfying $\langle Tx,y\rangle=\langle x,T^{*}y\rangle$ for any $x,y\in\mathcal{H}$.
Such a $T^*$ is called the \emph{adjoint of $T$}. We call a norm-closed *-subalgebra of $\mathcal{B}(\mathcal{H})$ a \emph{C*-algebra}. In this paper, we always assume that all C*-algebras are unital.

Suppose that $\mathcal{A}$ is a C*-algebra. We use $\mathcal{A}^{\sharp}$ to denote the set of all bounded linear functionals on $\mathcal{A}$. Denote by $(\mathcal{A}^{\sharp})_{1}$ the unit ball in $\mathcal{A}^{\sharp}$, i.e., $(\mathcal{A}^{\sharp})_{1}=\{\rho\in \mathcal{A}^{\sharp}:\|\rho\|\leq 1\}$. In general, the space $\mathcal{A}^{\sharp}$ can be equipped with many topological structures. Among them, the norm topology and weak* topology are used most frequently. For $\rho\in \mathcal{A}^{\sharp}$, its norm is given by $\|\rho\|=\sup_{x\in \mathcal{A},\|x\|\leq 1}|\rho(x)|$. When $x\in \mathcal{A}$, the equation $\sigma_{x}(\rho)=|\rho(x)|$ defines a semi-norm on $\mathcal{A}^{\sharp}$. The family $\{\sigma_{x}: x\in \mathcal{A}\}$ of semi-norms determines the \emph{weak* topology} on $\mathcal{A}^{\sharp}$. Note that each $\rho_{0}\in \mathcal{A}^{\sharp}$ has a base of neighborhoods consisting of sets of the form $\{\rho\in \mathcal{A}^{\sharp}: |\rho(x_{j})-\rho_{0}(x_{j})|<\epsilon\}$ ($j=1,\ldots,m$), where $\epsilon>0$ and $x_{1},\ldots,x_{m}\in \mathcal{A}$.

A non-zero linear functional $\rho$ on an abelian C*-algebra $\A$ is called \emph{a multiplicative state} if for any $A,B\in \mathcal{A}$, $\rho(AB)=\rho(A)\rho(B)$.

Suppose now that $\mathcal{A}$ is an abelian C*-algebra and $X$ is its maximal ideal space. We define the map $\gamma: \mathcal{A}\rightarrow C(X)$ by
\begin{equation}\label{Gelfand transform}
\gamma(f)(\rho)=\rho(f), \quad f\in \cA, \,\rho \in X.
\end{equation}
Here we use the fact that $X$ is also the space of all multiplicative states of $\mathcal{A}$. The map $\gamma$ is known as the \emph{Gelfand transform} from $\mathcal{A}$ onto $C(X)$, which is a *-isomorphism (see, e.g., \cite[Theorem 2.1]{Co}).

It is known that the above Hausdorff space $X$ is weak* compact. Next we show that $\mathcal{A}$ is countably generated as an abelian C*-algebra if and only if $X$ is metrizable and the topology induced by the metric coincides with the weak* topology. The sufficient part directly follows from \cite[Remark 3.4.15]{KR}. The necessary part is shown in the following proposition.

\begin{proposition}\label{metrizable}
Let $\mathcal{A}$ be an abelian C*-algebra. If $\mathcal{A}$ is countably generated, then $(\mathcal{A}^{\sharp})_{1}$ is metrizable and the toplology induced by the metric is equivalent to the weak* topology on $(\mathcal{A}^{\sharp})_{1}$. In particular,
the maximal ideal space of $\mathcal{A}$ is a compact metrizable space.
\end{proposition}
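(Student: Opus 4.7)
My plan is to reduce the proposition to the classical fact that on a weak* compact subset of the dual of a separable Banach space, the weak* topology is metrizable. Since countable generation of $\mathcal{A}$ as a C*-algebra implies norm separability of $\mathcal{A}$, the standard metrization trick will apply.

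First I would check separability. Let $\{a_n\}_{n=1}^\infty$ be a countable set that generates $\mathcal{A}$ as a C*-algebra. The $*$-subalgebra of $\mathcal{A}$ consisting of polynomials in $\{a_n, a_n^*\}$ with coefficients from $\mathbb{Q}+i\mathbb{Q}$ is countable and norm-dense in $\mathcal{A}$, so $\mathcal{A}$ is separable. Pick $\{x_k\}_{k=1}^\infty$ a countable norm-dense sequence in the closed unit ball of $\mathcal{A}$.

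Next, I would define the candidate metric on $(\mathcal{A}^{\sharp})_1$ by
\[
d(\rho,\sigma):=\sum_{k=1}^{\infty}2^{-k}\,|\rho(x_k)-\sigma(x_k)|.
\]
The series converges uniformly because each summand is bounded by $2^{-k}\cdot 2$. Symmetry and the triangle inequality are immediate; for non-degeneracy, if $d(\rho,\sigma)=0$ then $\rho$ and $\sigma$ agree on $\{x_k\}$, hence on its norm-closed linear span (the closed unit ball and then all of $\mathcal{A}$ by scaling), so $\rho=\sigma$. Thus $d$ is a metric.

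The main technical step is to verify that $d$ induces the weak* topology on $(\mathcal{A}^{\sharp})_1$. For one direction, a basic weak* neighborhood of $\rho_0$ determined by $y_1,\dots,y_m\in\mathcal{A}$ and $\epsilon>0$ can be approximated: choose $x_{k_j}$ close to $y_j/\|y_j\|$ in norm so that $|\rho(x_{k_j})-\rho_0(x_{k_j})|$ small forces $|\rho(y_j)-\rho_0(y_j)|$ small on $(\mathcal{A}^{\sharp})_1$; but such control on finitely many $x_{k_j}$ follows from $d(\rho,\rho_0)$ being small. Conversely, any $d$-ball around $\rho_0$ contains a weak* neighborhood: truncate the defining sum after $N$ terms so that the tail is at most $\epsilon/2$, and make the first $N$ terms small via a weak* neighborhood using the test functions $x_1,\dots,x_N$. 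Hence the identity map from $((\mathcal{A}^{\sharp})_1,\text{weak*})$ to $((\mathcal{A}^{\sharp})_1,d)$ is continuous; since the domain is weak* compact (Banach--Alaoglu) and the codomain is Hausdorff, this identity is a homeomorphism.

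Finally, the maximal ideal space $X$ of $\mathcal{A}$ is the set of multiplicative states, which is a weak*-closed subset of $(\mathcal{A}^{\sharp})_1$ (multiplicativity is preserved under weak* limits, and $\|\rho\|=\rho(1)=1$ for a multiplicative state on a unital abelian C*-algebra). Being a closed subspace of a compact metrizable space, $X$ itself is compact and metrizable, with the subspace metric inducing the weak* topology. The main obstacle in the argument is the careful two-sided comparison of the $d$-topology and the weak* topology; once the density of $\{x_k\}$ is exploited, compactness plus the Hausdorff property of both topologies makes the conclusion automatic.
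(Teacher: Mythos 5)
Your proof is correct and follows essentially the same route as the paper: choose a countable dense sequence $\{x_k\}$ in the unit ball, define the standard weighted metric $d(\rho,\sigma)=\sum_k 2^{-k}|\rho(x_k)-\sigma(x_k)|$, verify that it induces the weak* topology, and pass to the maximal ideal space as a closed subset. The only cosmetic differences are that you spell out the reduction from countable generation to norm-separability (which the paper asserts without proof) and you supplement the two-sided neighborhood comparison with the ``continuous bijection from a compact space to a Hausdorff space is a homeomorphism'' shortcut, whereas the paper works directly with net convergence in both directions.
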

\begin{proof}
Since $\mathcal{A}$ is countably generated, there is a countable dense subset in $\mathcal{A}$.
Let $\{g_{1},g_{2},\ldots\}$ be a dense subset of $(\mathcal{A})_1$, the unit ball in $\mathcal{A}$. For any $\rho_{1},\rho_{2}\in (\mathcal{A}^{\sharp})_{1}$,
we define $d(\rho_{1},\rho_{2})=\sum_{i=1}^{\infty}\frac{|(\rho_{1}-\rho_{2})(g_{i})|}{2^{i}}$.
It is not hard to check that $d$ is a metric on $(\mathcal{A}^{\sharp})_{1}$.
Moreover, for any net $\{\rho_{\alpha}\}$ of elements of $(\mathcal{A}^{\sharp})_{1}$, the net $\{d(\rho_{\alpha},\rho)\}$ converges to $0$ is equivalent to the condition that,
for any $i\geq 1$, the net $\{\rho_{\alpha}(g_{i})\}$ converges to $\rho(g_{i})$.

Next, we show that the weak* topology is equivalent to the topology induced by the metric $d$ on $(\mathcal{A}^{\sharp})_{1}$.
Suppose that the net $\{\rho_{\alpha}\}$ of elements in $(\mathcal{A}^{\sharp})_{1}$, weak* converges to $\rho$. Then, for any $i\geq 1$,
the net $\{\rho_{\alpha}(g_{i})\}$ converges to $\rho(g_{i})$. Thus the net $\{d(\rho_{\alpha},\rho)\}$ converges to $0$.
Conversely, if the net $\{d(\rho_{\alpha},\rho)\}$ converges to $0$, where $\rho_{\alpha}\in (\mathcal{A}^{\sharp})_{1}$, then $\{\rho_{\alpha}(g_{i})\}$ converges to $\rho(g_{i})$ for any $i\geq 1$.
Note that, for any $\alpha$, $\|\rho_{\alpha}\|\leq 1$. Then for any $g\in \mathcal{A}$, the net
$\{\rho_{\alpha}(g)\}$ converges to $\rho(g)$. So the net $\{\rho_{\alpha}\}$ is weak* convergent to $\rho$ in $(\mathcal{A}^{\sharp})_{1}$.

By Alaoglu-Bourbaki theorem $(\mathcal{A}^{\sharp})_{1}$ is weak* compact. Let $X$ be the maximal ideal space of $\mathcal{A}$.
Then, relative to the weak* topology, $X$ is a closed subset of $(\mathcal{A}^{\sharp})_{1}$. From the above analysis, we see that the weak* topology on $(\mathcal{A}^{\sharp})_{1}$ coincides with the topology induced by the metric $d$ on it. Thus $X$ is a compact metrizable space.
\end{proof}

\begin{proposition}\label{the set of natural numbers are dense in X}
Suppose that $\mathcal{A}$ is a C*-subalgebra of $l^{\infty}(\mathbb{N})$ and $X$ the maximal ideal space of $\mathcal{A}$.
Let $\iota: \mathbb{N}\rightarrow X$ be the map given by
\begin{equation}\label{multiplicative state of point evaluation}
\iota(n):f \mapsto f(n),
\end{equation}
for any $f\in \mathcal{A}$. Then the weak* closure of $\iota(\mathbb{N})$ is $X$ $($write $\overline{\iota(\mathbb{N})} = X$$)$.
\end{proposition}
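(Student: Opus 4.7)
The plan is to argue by contradiction, using the fact that the Gelfand transform $\gamma\colon\mathcal{A}\to C(X)$ from (\ref{Gelfand transform}) is an isometric $*$-isomorphism, combined with Urysohn's lemma on the compact Hausdorff space $X$.

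Suppose for contradiction that $Y:=\overline{\iota(\mathbb{N})}$ is a proper closed subset of $X$. Since $\mathcal{A}$ is a unital abelian C*-algebra, its maximal ideal space $X$ is weak* compact Hausdorff. Pick $\rho_{0}\in X\setminus Y$. By Urysohn's lemma applied to the disjoint closed sets $Y$ and $\{\rho_{0}\}$, there is a function $g\in C(X)$ with $g|_{Y}=0$ and $g(\rho_{0})=1$.

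Since $\gamma$ is surjective, choose $f\in\mathcal{A}$ with $\gamma(f)=g$. For every $n\in\mathbb{N}$, the evaluation functional $\iota(n)$ lies in $Y$, so by the definition (\ref{Gelfand transform}) and (\ref{multiplicative state of point evaluation}),
\[
f(n)=\iota(n)(f)=\gamma(f)(\iota(n))=g(\iota(n))=0.
\]
But $\mathcal{A}$ sits inside $l^{\infty}(\mathbb{N})$, so its elements are genuine bounded sequences on $\mathbb{N}$, and a sequence all of whose values are zero is the zero element. Thus $f=0$ in $\mathcal{A}$, which forces $g=\gamma(f)=0$, contradicting $g(\rho_{0})=1$.

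There is essentially no obstacle here beyond assembling the standard facts: the main step is noticing that $f\in\mathcal{A}\subseteq l^{\infty}(\mathbb{N})$ is determined by its pointwise values on $\mathbb{N}$, so the transported function $g=\gamma(f)$ cannot vanish on $\iota(\mathbb{N})$ without vanishing identically on $X$. The only small point worth checking carefully is that $\iota(n)$ really is an element of $X$, i.e.\ a nonzero multiplicative linear functional on $\mathcal{A}$, but this is immediate from (\ref{multiplicative state of point evaluation}) together with the pointwise algebra structure on $l^{\infty}(\mathbb{N})$.
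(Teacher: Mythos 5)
Your proof is correct and follows essentially the same argument as the paper: contradiction via Urysohn's lemma to produce a continuous function vanishing on $\iota(\mathbb{N})$ but not identically, then observing that the corresponding element of $\mathcal{A}\subseteq l^\infty(\mathbb{N})$ must be the zero sequence. Your added remark that one should verify $\iota(n)$ is a multiplicative state (hence in $X$) is a reasonable detail, though the paper takes it for granted.
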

\begin{proof}
Assume on the contrary that $\overline{\iota(\mathbb{N})}\neq X$. Choose $y\in X \setminus \overline{\iota(\mathbb{N})}$. By Urysohn's lemma, there is a $G\in C(X)$
such that $G(y)=1$ and $G(x)=0$ for any $x\in \overline{\iota(\mathbb{N})}$. By equation (\ref{Gelfand transform}), for any $n\in \mathbb{N}$, $0=G(\iota(n))=\iota(n)(\gamma^{-1}G)=(\gamma^{-1}G)(n)$. Then $\gamma^{-1}(G)=0$ and $G=0$ correspondingly. This contradicts $G(y)=1$. Hence $\overline{\iota(\mathbb{N})}=X$.
\end{proof}

If $\iota$ is injective, then we can view $\N$ as a subset of $X$. For $\mathcal{A}=l^{\infty}(\mathbb{N})$, $\iota$ is injective. We shall use $\beta\mathbb{N}$ to denote the maximal ideal space of $l^{\infty}(\mathbb{N})$, which is also known as the Stone-\v{C}ech compactification of $\mathbb{N}$ \cite{E}.
Since $l^{\infty}(\mathbb{N})$ is not a separable C*-algebra, it is not countably generated as a C*-algebra.  Correspondingly,
the maximal ideal space $\beta\mathbb{N}$ is not metrizable.

\section{Anqie of $\mathbb{N}$}\label{section:anqies}
In this section, we briefly introduce the concept of anqie and list some results that will be used later. For more about anqie, we refer to \cite{LG} and \cite{Fei}.
\begin{definition}\label{definition of anqies}
Let $X$ be a compact Hausdorff space and $\iota$ a map from $\mathbb{N}$ to $X$ with dense range. We call $X$ \textbf{an anqie} $($of $\mathbb{N})$ if $\iota(n)\mapsto \iota(n+1)$ is a well-defined map on $\iota(\mathbb{N})$ and it can be extended to a continuous map from $X$ into itself. If we denote this extended map by $\sigma_{A}$, we also call $(X,\sigma_{A})$ \textbf{an anqie} $($of $\mathbb{N})$.
\end{definition}
We now explain a little about the above notion. For a general map $\iota: \mathbb{N}\rightarrow X$,  $\iota(n)\mapsto \iota(n+1)$ may not be well-defined, such as $\iota:\mathbb{N}\rightarrow S^{1}$ (the unit circle) defined as $\iota(n)=e^{2\pi i\sqrt{n}}$. Even though $\iota(n)\mapsto \iota(n+1)$ is well defined, it may not induce a continuous map on $X$, such as $\iota:\mathbb{N}\rightarrow S^{1}$ defined as $\iota(n)=e^{2\pi in^2\theta}$ with $\theta$ irrational. So an anqie of $\mathbb{N}$ preserves the addition structure of natural numbers when $\mathbb{N}$ is mapped to $X$.
Here is a simple example of anqie.
\begin{example}\label{irrational rotation}{\rm Let $\theta$ be an irrational number with
$0<\theta<1$. Define $\iota: n \mapsto e^{2\pi i n\theta}$, a map from $\N$
into $S^1$. It is easy to see that $\iota$ has a dense range in $S^1$ and $
e^{2\pi i n\theta}\mapsto e^{2\pi i(n+1)\theta}=e^{2\pi i\theta}e^{2\pi
in\theta}$ induces a
continuous map $z\mapsto e^{2\pi i\theta}z$, denoted by $\sigma_{A}$ on $S^1$. Thus $(S^1, \sigma_{A})$
is an anqie of
$\N$.}
\end{example}

Next we consider how to construct anqies of $\mathbb{N}$. One way to obtain anqies of $\mathbb{N}$ is to construct point transitive topological dynamical systems.
Recall that a topological dynamical system (or, equivalently an $\N$-dynamics) is a pair $(X,T)$, where $X$ is a compact Hausdorff space and
$T$ a continuous map on $X$. Suppose that $(X,T,x_{0})$ is a point transitive topological dynamical system, i.e., the set $\{T^{n}x_{0}: n\in \mathbb{N}\}$ is
dense in $X$. Then $\iota:n\mapsto T^{n}x_{0}$ is a map from $\mathbb{N}$ to $X$ with dense range. It is easy to see that $\iota(n)\mapsto \iota(n+1)$ can be extended to the continuous map $T$ on $X$. Then $(X,T)$ is an anqie of $\mathbb{N}$. Summarize the above analysis, we conclude that
an $\N$-dynamics $(X, T)$ is an anqie of $\N$ if it is point transitive. In this construction, the structure of anqie  depends on the choice of the transitive point.

Another method to construct anqies is through C*-algebras.
\begin{proposition}\label{anqie is a shif invariant algebra}
Suppose that $\mathcal{A}$ is a C*-subalgebra of $l^{\infty}(\mathbb{N})$ and $X$ the maximal ideal space of $\mathcal{A}$. Then $X$ is an anqie of $\mathbb{N}$ with the map $\iota$ given by equation (\ref{multiplicative state of point evaluation}) if and only if $\mathcal{A}$ is closed under the action $A$ defined in (\ref{20110114formula}), i.e., $Af\in \mathcal{A}$ for any $f\in \mathcal{A}$.
\end{proposition}

\begin{proof}
Suppose that $X$ is an anqie of $\mathbb{N}$. Then the map $\iota(n)\mapsto \iota(n+1)$ is extended to a continuous map on $X$, denoted by $\sigma_{A}$. Given $f\in \mathcal{A}$, assume that $F=\gamma(f)\in C(X)$ (see equation (\ref{Gelfand transform})). Note that $F\circ \sigma_{A}\in C(X)$. Let $g= \gamma^{-1}(F\circ \sigma_{A})$ in $\mathcal{A}$. Then $g(n)=F\circ \sigma_{A}(\iota(n))=F(\iota(n+1))=f(n+1)=Af(n)$. Thus $g=Af$ in $\mathcal{A}$. This shows that $\mathcal{A}$ is $A$-invariant.

On the other hand, suppose that $\mathcal{A}$ is closed under $A$. Let $\sigma_{A}$ be the map from $X$ to itself given by $\sigma_{A}\rho(f)=\rho(Af)$ for any $\rho\in X$ and $f\in \mathcal{A}$.
It is easy to see that $\sigma_{A}(\iota(n))=\iota(n+1)$.
Now we show that $\sigma_{A}$ is a continuous map on $X$. If $\{\rho_{\alpha}\}$ is a weak* convergent net of elements of $X$, with limit $\rho$, then
for any $f\in \A$, $\rho_{\alpha}(Af)=(\sigma_{A}\rho_{\alpha})(f)$ converges to $\rho(Af)=(\sigma_{A}\rho)(f)$.
Thus the net $\{\sigma_{A}\rho_{\alpha}\}$ weak* converges to $\sigma_{A}\rho$ in $X$. Hence $\sigma_{A}$ is the continuous map on $X$ extended by $\iota(n)\mapsto \iota(n+1)$ and $X$ is an anqie of $\mathbb{N}$.
\end{proof}
From the above proposition, we can obtain anqies of $\mathbb{N}$ through constructing $A$-invariant C*-subalgebras of $l^{\infty}(\N)$. In particular, we often consider the anqie generated by a single arithmetic function $f$, i.e., the C*-algebra generated by $\{1, A^jf: j\in \mathbb{N}\}$. Denote it by $\mathcal{A}_{f}$. We use $X_{f}$ to denote the maximal ideal space of $\mathcal{A}_{f}$. From Proposition \ref{anqie is a shif invariant algebra}, we know that $\iota(n)\mapsto \iota(n+1)$ can be extended to a continuous map on $X_{f}$, denoted by $\sigma_{A}$. We also call $(X_{f},\sigma_{A})$ or $\mathcal{A}_{f}$ \emph{the anqie generated by $f$}. Let $\overline{f(\N)}$ denote the closure of $f(\N)$ in the complex plane $\C$. Since $\A_f$ contains $f$, there is a continuous map from $X_f$ onto
$\overline{f(\N)}$. But these two spaces may not be the same.

The following theorem describes $X_{f}$ in terms of $\overline{f(\mathbb{N})}$ and gives a representation of $\sigma_{A}$
(corresponding to the Bernoulli shift on a product space).
\begin{theorem}\cite[Theorem 2.3]{LG}\label{the method to compute the anqie}
Suppose that $f$ is a function in $l^{\infty}(\mathbb{N})$ and that $X_{f}$ is the maximal ideal space of the anqie $\mathcal{A}_{f}$ generated by $f$.
Denote by $\prod_{\mathbb{N}}\overline{f(\mathbb{N})}$ the Cartesian product of $\overline{f(\mathbb{N})}$ indexed by $\mathbb{N}$, endowed with the product topology.
Assume that $B$ is the Bernoulli shift on $\prod_{\mathbb{N}}\overline{f(\mathbb{N})}$
defined by \[B:(a_{0},a_{1},a_{2},\ldots)\mapsto (a_{1},a_{2},a_{3},\ldots).\]
Let $F$ be the map from $X_{f}$ into $\prod_{\mathbb{N}}\overline{f(\mathbb{N})}$,
such that for any $\rho\in X_{f}$,
\begin{equation}
F(\rho)=(\rho(f),\rho(Af),\ldots).
\end{equation}
The following statements hold.

(\romannumeral1) The space $X_{f}$ is homeomorphic to $F(X_{f})$.

(\romannumeral2) $F(X_{f})$ is  the closure of $\{(f(n),f(n+1),\ldots):n\in \mathbb{N}\}$
in $\prod_{\mathbb{N}}\overline{f(\mathbb{N})}$.

(\romannumeral3) The restriction of the Bernoulli shift $B$ on $F(X_{f})$ is identified with $\sigma_{A}$ on $X_{f}$.
\end{theorem}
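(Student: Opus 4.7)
The plan is to exploit Gelfand duality together with Proposition \ref{the set of natural numbers are dense in X}, reducing everything to checking statements on the dense subset $\iota(\mathbb{N}) \subseteq X_f$ and then extending by continuity. The first thing I would check is that $F$ actually takes values in $\prod_{\mathbb{N}}\overline{f(\mathbb{N})}$: each coordinate $\rho(A^j f)$ must lie in the spectrum of $A^j f \in \mathcal{A}_f$, and via the Gelfand isomorphism this spectrum equals the range of $\gamma(A^j f) \in C(X_f)$. Since $\gamma(A^j f)(\iota(n)) = f(n+j)$ for all $n$, and $\iota(\mathbb{N})$ is dense in $X_f$, continuity of $\gamma(A^j f)$ forces its range to lie in $\overline{f(\mathbb{N})}$, which is exactly what is needed.

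For part (i), continuity of $F$ will be immediate from the definitions of the weak* topology on $X_f$ and the product topology on the target: a net $\rho_\alpha \to \rho$ forces coordinate-wise convergence $\rho_\alpha(A^j f) \to \rho(A^j f)$ for every $j$. For injectivity, I would use that $\{1, A^j f : j \geq 0\}$ generates $\mathcal{A}_f$ as a C*-algebra, so any two multiplicative states agreeing on all $A^j f$ must agree on all $*$-polynomials in the generators, and by continuity on the full closure $\mathcal{A}_f$. The passage from continuous injection to homeomorphism onto image is then automatic because $X_f$ is compact and $\prod_{\mathbb{N}}\overline{f(\mathbb{N})}$ is Hausdorff.

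Part (ii) will reduce to a density argument: by construction $F(\iota(n)) = (f(n), f(n+1), \ldots)$, and Proposition \ref{the set of natural numbers are dense in X} gives $\overline{\iota(\mathbb{N})} = X_f$. Continuity of $F$ shows that $F(X_f)$ is contained in the closure of $\{(f(n),f(n+1),\ldots) : n \in \mathbb{N}\}$, while compactness of $X_f$ makes $F(X_f)$ closed in the product, giving the reverse inclusion. Part (iii) is then a one-line verification using the formula $\sigma_A\rho(g) = \rho(Ag)$ established in the proof of Proposition \ref{anqie is a shif invariant algebra}: for each $j$, the $j$-th coordinate of $F(\sigma_A \rho)$ is $\rho(A^{j+1} f)$, which matches the $j$-th coordinate of $B(F(\rho))$.

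The only step requiring any real care is the well-definedness verification above — ensuring that the Gelfand transform of $A^j f$ takes values in $\overline{f(\mathbb{N})}$ rather than in some strictly larger closed subset of $\mathbb{C}$. This is where one must cleanly connect the abstract spectral picture on $X_f$ with the concrete values of $f$ on $\mathbb{N}$, but it is a clean spectral observation rather than a genuine obstacle; after that is settled, the argument is standard Gelfand duality combined with the compact-to-Hausdorff principle.
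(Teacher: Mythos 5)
Your proof is correct and complete. The paper does not actually prove this theorem — it cites it from \cite[Theorem~2.3]{LG} — so there is no in-paper argument to compare against; but your argument is the natural one and it holds up under scrutiny.

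A few small confirmations on the points you flagged as needing care. The well-definedness check is exactly right: for a multiplicative state $\rho$ on $\mathcal{A}_f$, the value $\rho(A^jf)$ lies in the spectrum of $A^jf$, which under the Gelfand transform is the range of $\gamma(A^jf)\in C(X_f)$; since $\gamma(A^jf)(\iota(n))=f(n+j)$ and $\iota(\mathbb{N})$ is dense in $X_f$ by Proposition~\ref{the set of natural numbers are dense in X}, continuity gives $\gamma(A^jf)(X_f)\subseteq\overline{\{f(n+j):n\in\mathbb{N}\}}\subseteq\overline{f(\mathbb{N})}$. For injectivity in part~(i), one point worth making explicit is that a multiplicative state on a unital abelian C*-algebra is automatically $*$-preserving, so agreement on all $A^jf$ propagates to agreement on $*$-polynomials in the generators and, by norm continuity of states, to all of $\mathcal{A}_f$; you implicitly use this and it is fine. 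The compact-to-Hausdorff principle then upgrades the continuous injection to a homeomorphism. Parts~(ii) and~(iii) are handled exactly as one should: $F(X_f)=F(\overline{\iota(\mathbb{N})})$ is simultaneously contained in $\overline{F(\iota(\mathbb{N}))}$ by continuity and closed by compactness, forcing equality; and the intertwining identity $F(\sigma_A\rho)=B(F(\rho))$ follows coordinate-by-coordinate from $\sigma_A\rho(g)=\rho(Ag)$.
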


We refer readers to \cite[Theorem 3.4]{Fei} for a more general version of Theorem \ref{the method to compute the anqie}.
\begin{remark}\label{WAP}{\rm
It follows from the above theorem that for $f\in l^{\infty}(\mathbb{N})$, $X_{f}$ can be identified as the set of all pointwise limits of sequences $\{A^{n}f$, $n=0,1,2,\ldots\}$ in $l^{\infty}(\mathbb{N})$. This still holds when the semigroup $\mathbb{N}$ is replaced by the group $\mathbb{Z}$ or a general abelian topological group $G$. While for the case of $\mathbb{Z}$ or $G$, $X_{f}$ has been extensively studied in dynamical systems (see e.g., \cite{Ebe}, \cite{Vee2} and \cite{GM}). In \cite{Ebe}, Eberlein introduced the concept of weakly almost periodic function, i.e., $f\in l^{\infty}(G)$ with $X_{f}$ weak compact in $l^{\infty}(G)$. Let $W(G)$ denote the set of these functions. In \cite{Vee2}, Veech introduced a *-subalgebra $K(G)$ of $l^{\infty}(G)$ consisting of all $f\in l^{\infty}(G)$  with $X_{f}$ norm separable in $l^{\infty}(G)$, which contains $W(G)$. Recently, the M\"{o}bius disjointness of $W(\mathbb{Z})$ has been proved in \cite{Vee} and that of $K(\mathbb{Z})$ has been proved in \cite{HWY}.}
\end{remark}

Applying Theorem \ref{the method to compute the anqie}, we can obtain many interesting examples of anqies $(X_{f},\sigma_{A})$. 
The following two examples are given in \cite{LG, Fei}.

\begin{example}{\rm
Let $f(n)=e^{2\pi i\sqrt{n}}$, for $n\in \mathbb{N}$.
Then $X_{f}$ is homeomorphic to $\{e^{-\frac{1}{n}}f(n):n\in \mathbb{N}\}\cup S^{1}$, a subset of $\C$,
denoted by $X$.
And $\sigma_{A}$ is the identity map on $S^{1}$, while, on the set $\{e^{-\frac{1}{n}}f(n):n\in \mathbb{N}\}$,
$\sigma_{A}$ maps $e^{-\frac{1}{n}}f(n)$ to $e^{-\frac{1}{n+1}}f(n+1)$.}
\end{example}

\begin{example}\label{irrational rotation-3}{\rm
Suppose that $\theta$ is irrational and
$f(n)=e^{2\pi in^2\theta}$, then $X_{f}$ is homeomorphic to $S^{1}\times S^{1}$. Moreover,
if we identify $S^{1}\times S^{1}$ with $\mathbb{R}/\mathbb{Z}\times \mathbb{R}/\mathbb{Z}$, then we can rewrite the map $\sigma_{A}$ as
 \begin{align*}
        \sigma_{A}(\alpha_1, \alpha_2) =
        \begin{pmatrix}
            0 & 1 \\
            -1 & 2\\
        \end{pmatrix}
        \begin{pmatrix}
           \alpha_1 \\
           \alpha_2 \\
        \end{pmatrix} +
        \begin{pmatrix}
           0 \\
           2\theta
        \end{pmatrix}.
    \end{align*}
}\end{example}

At the end of this section, we state the following result which will be used in later parts of this paper. Recall that for two topological dynamical systems $(X_{1},T_{1})$ and $(X_{2},T_{2})$,
if there is a continuous surjective map $\varphi$ from $X_{1}$ onto $X_{2}$ such that $\varphi\circ T_{1}=T_{2}\circ \varphi$,
we call $\varphi$ \emph{a factor map} and $(X_{2},T_{2})$ a \emph{factor} of $(X_{1},T_{1})$.
Moreover, if $\varphi$ is a homeomorphism, we say that $(X_{1},T_{1})$ and $(X_{2},T_{2})$ are \emph{(topologically) conjugate (to each other)}.
\begin{proposition}\label{a factor}
Let $f$ be an arithmetic function realized in $(X,T)$, i.e., there is a continuous function $g\in C(X)$ and $x_{0}\in X$
such that $f(n)=g(T^n x_{0})$. Suppose that $X_{f}$ is the maximal ideal space of the anqie generated by $f$.
Let $Y$ be the closure of the set $\{T^{n}x_{0}:n\in \mathbb{N}\}$ in $X$. Then $(X_{f},\sigma_{A})$ is a factor of $(Y,T)$.
\end{proposition}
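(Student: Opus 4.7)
My plan is to realize the factor map explicitly via the coordinate description of $X_{f}$ given by Theorem \ref{the method to compute the anqie}. That result identifies $X_{f}$ with the set $F(X_{f})\subseteq \prod_{\mathbb{N}}\overline{f(\mathbb{N})}$ obtained as the closure of the orbit sequence $\{(f(n),f(n+1),\ldots):n\in\mathbb{N}\}$, and identifies $\sigma_{A}$ with the restriction of the Bernoulli shift $B$. So the strategy is: produce a continuous $\mathbb{N}$-equivariant map from $Y$ into $\prod_{\mathbb{N}}\overline{f(\mathbb{N})}$ whose image is exactly $F(X_{f})$.

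First I would define $\varphi: Y\to \prod_{\mathbb{N}}\overline{f(\mathbb{N})}$ by
\[
\varphi(y)=\bigl(g(y),g(Ty),g(T^{2}y),\ldots\bigr).
\]
Before anything else I need to check this makes sense, i.e., that each coordinate lies in $\overline{f(\mathbb{N})}$. The set $\{T^{n}x_{0}\}_{n\in\mathbb{N}}$ is forward $T$-invariant, so by continuity of $T$ its closure $Y$ satisfies $TY\subseteq Y$; inductively $T^{k}Y\subseteq Y$. Therefore $g(T^{k}y)\in g(Y)\subseteq \overline{g(\{T^{n}x_{0}\})}=\overline{f(\mathbb{N})}$, as required. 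Continuity of $\varphi$ in the product topology is immediate: each coordinate $y\mapsto g(T^{k}y)$ is continuous, so $\varphi$ is continuous.

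Next I would pin down the image. For $y=T^{n}x_{0}$, one has $\varphi(T^{n}x_{0})=(g(T^{n}x_{0}),g(T^{n+1}x_{0}),\ldots)=(f(n),f(n+1),\ldots)$, so $\varphi(\{T^{n}x_{0}:n\in\mathbb{N}\})$ is precisely the orbit sequence whose closure, by part (ii) of Theorem \ref{the method to compute the anqie}, equals $F(X_{f})$. Since $Y$ is compact and $\varphi$ continuous, $\varphi(Y)$ is compact, hence closed, and contains $\varphi(\{T^{n}x_{0}\})$, so $\varphi(Y)\supseteq F(X_{f})$. Conversely density of $\{T^{n}x_{0}\}$ in $Y$ together with continuity of $\varphi$ gives $\varphi(Y)\subseteq \overline{\varphi(\{T^{n}x_{0}\})}=F(X_{f})$. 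Thus $\varphi$ is a continuous surjection onto $F(X_{f})$, which by part (i) is homeomorphic to $X_{f}$.

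Finally I would verify equivariance:
\[
\varphi(Ty)=(g(Ty),g(T^{2}y),\ldots)=B(g(y),g(Ty),\ldots)=B\varphi(y),
\]
and by part (iii) of Theorem \ref{the method to compute the anqie}, $B$ restricted to $F(X_{f})$ is identified with $\sigma_{A}$, giving $\varphi\circ T=\sigma_{A}\circ \varphi$. The only subtlety worth watching is the verification that $TY\subseteq Y$ so that the coordinates $g(T^{k}y)$ remain in $\overline{f(\mathbb{N})}$; once that is in hand, Theorem \ref{the method to compute the anqie} does essentially all the work and no genuine obstacle arises.
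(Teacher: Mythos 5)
Your proof is correct but genuinely different in approach from the paper's. You work concretely inside the coordinate model of $X_{f}$ supplied by Theorem~\ref{the method to compute the anqie}: you build the factor map explicitly as $\varphi(y)=(g(y),g(Ty),g(T^{2}y),\ldots)$, check $TY\subseteq Y$ so the coordinates land in $\overline{f(\mathbb{N})}$, identify $\varphi(Y)$ with $F(X_{f})$ using compactness and density in both directions, and read off equivariance from the Bernoulli shift. The paper instead argues purely through Gelfand duality: the point-evaluation map $\varrho:n\mapsto T^{n}x_{0}$ induces an embedding $C(Y)\hookrightarrow l^{\infty}(\mathbb{N})$ whose image contains $\cA_{f}$, so restriction of multiplicative states gives a continuous surjection from (a homeomorphic copy of) $Y$ onto $X_{f}$, and one checks that this commutes with the shifts. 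The two routes buy different things: yours is more elementary and explicit, handing you the factor map in coordinates with all verifications at the level of sequences; the paper's is more structural, requiring only the general fact that a surjection of commutative C*-algebras dualizes to a continuous injection/surjection of spectra, and it does not need the product-space model at all. Both are complete; neither has a gap.
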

\begin{proof}
Since $\varrho: n\mapsto T^{n}x_{0}$ is a map from $\mathbb{N}$ to $Y$ with dense range,
it induces an embedding from $C(Y)$ into $l^{\infty}(\mathbb{N})$ (denoted by $\varrho$ again),
i.e., for any $h\in C(Y)$, $\varrho(h)(n)=h(T^{n}x_{0})$.
Then $\varrho (C(Y))$ is a C*-subalgebra of $l^{\infty}(\mathbb{N})$. Denote the maximal ideal space of $\varrho (C(Y))$ by $\widetilde{Y}$. By Proposition \ref{the set of natural numbers are dense in X}, the map $T^{n}x_{0}\mapsto \iota(n)$ can be extended to a homeomorphism from $Y$ onto $\widetilde{Y}$ (denoted by $\varrho$ again).
Note that $\varrho(g)=f$. So $\mathcal{A}_{f}$, the anqie generated by $f$, is a *-subalgebra of $\varrho (C(Y))$.

Since each multiplicative state on $\varrho (C(Y))$ (an element in $\widetilde{Y}$) is also a multiplicative state on $\mathcal{A}_{f}$
(an element in $X_{f}$) and that every maximal ideal in $\mathcal{A}_{f}$ extends to a maximal ideal (may not be unique) in $\varrho (C(Y))$,
the induced map $\pi$ from $\widetilde{Y}$ onto $X_{f}$ given by
\[
\pi(\rho)(h) = \rho(h),\quad \rho \in \widetilde{Y},\, h\in \cA_{f}.
\]
is continuous and surjective.
It is not hard to check that $\pi\circ\varrho:Y\rightarrow X_{f}$ is a factor map. Then $(X_{f},\sigma_{A})$ is a factor of $(Y,T)$.
\end{proof}
\section{Mean states and asymptotically periodic functions}\label{invariant means}
\label{Invariant means}
In number theory, we are often concerned with estimates
of the form $\frac{1}{x}\sum_{n\leq x}f(n)$. For this purpose,
we shall consider states on $l^{\infty}(\mathbb{N})$ given by certain limits of $\frac{1}{N}\sum_{n=0}^{N-1}f(n)$ along ``ultrafilters".
Then the inner product of two functions $f$ and $g$ given by the states is exactly certain limits of sums like $\frac{1}{N}\sum_{n=0}^{N-1}f(n)\overline{g(n)}$.

Recall that $\beta\mathbb{N}$ is the maximal ideal space of $l^{\infty}(\mathbb{N})$. Elements in $\beta\mathbb{N}\setminus \mathbb{N}$ are called \emph{free ultrafilters}. By Proposition \ref{the set of natural numbers are dense in X}, $\mathbb{N}$ is dense in $\beta \mathbb{N}$. Given a free ultrafilter $\omega$, for any $f\in l^{\infty}(\mathbb{N})$, there is a subsequence $\{m_{j}\}_{j=1}^{\infty}$ of $\mathbb{N}$ (depending on $f$) such that $\omega(f)=\lim_{j\rightarrow \infty}f(m_{j})$. We usually write $\omega(f)=\lim_{n\rightarrow \omega}f(n)$, called \emph{the limit of $f$ at $\omega$}.

For a C*-subalgebra $\mathcal{A}$ of $l^{\infty}(\mathbb{N})$, the linear functional $\rho$ is called a \emph{state} on $\mathcal{A}$ if $\rho(1)=1$ and $\rho(f)\geq 0$ for any $f\in \mathcal{A}$ with $f\geq 0$. We shall study the $A$-invariant states on anqies defined below.

\begin{definition}
Suppose that $\mathcal{A}$ is an $A$-invariant C*-subalgebra of $l^{\infty}(\mathbb{N})$, i.e., $Af\in \mathcal{A}$ for any $f\in \mathcal{A}$.
A state $\rho$ on $\mathcal{A}$ is called \textbf{$A$-invariant}, or ``invariant" for short, if $\rho(Af)=\rho(f)$ for any $f\in \mathcal{A}$.
\end{definition}
Invariant states may or may not be related to average values of functions. Here we give an example to explain this phenomena.
\begin{example}{\rm
Let $G=\cup_{n=1}^{\infty}\{n^2-n,n^2-n+1,\ldots,n^2-1\}$ be a subset of $\mathbb{N}$, and $G_{n}=\{i\in G: 0\leq i\leq n-1\}$. Define $F_{n}(f)=\frac{1}{|G_{n}|}\sum_{i\in G_{n}}f(i)$ for $f\in l^{\infty}(\mathbb{N})$.
Then for each given $f$ the function $n\mapsto F_{n}(f)$ gives rise to a function in $l^{\infty}(\mathbb{N})$.
Choose $\omega\in \beta\mathbb{N}\setminus \mathbb{N}$, and define $F_{\omega}(f)=\lim_{n\rightarrow \omega}F_{n}(f)$.
Then $F_{\omega}$ is an $A$-invariant state on $l^{\infty}(\mathbb{N})$. If $\chi_G$ is the
characteristic function supported on $G$, then $F_\omega(\chi_G)=1$.
But the relative density of $G$ in $\N$ is zero. Thus $F_\omega(f)$
does not depend on the average sum $\frac1n\sum_{i=0}^{n-1}f(i)$.
}\end{example}

On the other hand, there are $A$-invariant states depending on average values of functions, which are called ``mean states" in \cite[Definition 5.3]{LG}.
\begin{definition}\label{def of mean state}
Suppose $\omega\in\beta\N\setminus \N$ is a given
free ultrafilter. For any $n\in\N$ and any $f$ in $l^\infty(\N)$, we
define $E_n(f)=\frac 1{n}\sum_{j=0}^{n-1}f(j)$. Then, for each given $f$, the function $n\to E_n(f)$ gives rise to another function in
$l^\infty(\N)$. The limit of $E_n(f)$ at $\omega$ is denoted by
$E_\omega(f)$. Then $E_\omega$ is an $A$-invariant state defined on $l^\infty(\N)$
or called ``a\textbf{ mean state}''
$($or, ``a\textbf{ mean}" for short$)$.
\end{definition}
From now on, we shall use $E$ to denote a given mean state on $l^{\infty}(\mathbb{N})$ (depending on a free ultrafilter). For a real-valued function $f\in l^\infty(\mathbb{N})$, we always have:
$$
\liminf_{n\to \infty} \frac 1{n}\ds\sum_{j=0}^{n-1}f(j) \le E(f) \le
\limsup_{n\to \infty} \frac 1{n}\ds\sum_{j=0}^{n-1}f(j).
$$

Suppose $\mathcal{A}$ is a countably generated  anqie of $\mathbb{N}$.  For each $N\in \mathbb{N}$,
define the state $\rho_{N}$ on $\mathcal{A}$ by $\rho_{N}(f)=\frac{1}{N}\sum_{j=0}^{N-1}f(j)$.
So $\{\rho_{N}\}_{N=1}^{\infty}$ is a sequence in $(\mathcal{A}^{\sharp})_{1}$.
By Proposition \ref{metrizable},  $(\mathcal{A}^{\sharp})_{1}$ is metrizable and compact, so there is a subsequence $\{\rho_{N_{m}}\}_{m=1}^{\infty}$ that
converges to some $\rho\in (\mathcal{A}^{\sharp})_{1}$.  We call this $\rho$ {\sl the limit of $\rho_{N_m}$ or the state given (uniquely) by the sequence $\{N_{m}\}_{m=1}^{\infty}$}. It is not hard to check that $\rho$ is an $A$-invariant state, and for any free ultrafilter $\omega$ in the closure of $\{N_{m}: m=1,2,3,\ldots\}$ in $\beta\mathbb{N}$, the restriction of $E_{\omega}$ on $\mathcal{A}$ is $\rho$.

Now we perform the GNS construction on $l^\infty(\N)$ with respect
to $E$.
Define $\<f, g\>_{E}=E(\bar{g}f)$, the semi-inner product
on $l^\infty(\N)$ and $\|f\|_{E}=(\<f,f\>_{E})^{\frac{1}{2}}$, the semi-norm on $l^\infty(\N)$ (see \cite[Proposition 4.3.1]{KR}).
We use $\mathcal{K}$ to denote the subalgebra of
$l^\infty(\N)$ containing all $f$ so that $E(|f|^2)=\<f,f\>_{E}=0$. Then
$\mathcal{K}$ is a closed two-sided ideal in $l^\infty(\N)$. Thus
$\B:=l^\infty(\N)/\mathcal{K}$ is a C*-algebra, and $\<\ ,\
\>_{E}$ induces an inner product on $\B$.
For $f\in l^\infty(\N)$, we
may use $\tilde{f}$ (or simply $f$ if there is no ambiguity) to
denote the coset $f+\mathcal{K}$ in $\B$. When
$\tilde{f},\tilde{g}\in\B$, we still use
\begin{equation}\label{inner product}
\<\tilde{f},\tilde{g}\>_{E}=E(f\overline{g})=\lim_{n\to \omega} \frac 1{n}\sum_{j=0}^{n-1}f(j)\overline{g}(j)
\end{equation}
to denote the inner product on
$\B$ and
\begin{equation}\label{inner norm}
\|\tilde{f}\|_E=(\<\tilde{f}, \tilde{f}\>_{E})^{\frac12}=\Big(\lim_{n\to \omega} \frac 1{n}\sum_{j=0}^{n-1}|f(j)|^2\Big)^{\frac{1}{2}}
\end{equation}
for
the (Hilbert space) vector norm on $\B$. The completion of $\B$
under this norm is denoted by $\mathcal{H}_E$.
\begin{remark}{\rm Our later results will depend on $E$ but not on a specific one. Therefore,
our definitions or properties stated later are for any mean state $E$.
For example, if $f$ and $g$ are \emph{orthogonal}, $E(f\overline{g})=0$ holds for any mean state $E$.
The orthogonality of arithmetic functions may be viewed as disjointness between two functions in number theory.}
\end{remark}
Next, we study some properties of (strongly) asymptotically periodic functions (Definitions 1.3 and 1.4). We start from the following generalized notion of periodicity that introduced in \cite[Section 5]{LG}.
An arithmetic function $f\in l^{\infty}(\mathbb{N})$ is said to be\emph{ essentially periodic} (or ``e-periodic")
if there is an integer $n_{0}\geq 1$ such that $f=A^{n_{0}}f$ in $\mathcal{H}_{E}$.
The smallest such $n_{0}$ $(\geq 1)$ is called the e-period of $f$. From the definition of strongly asymptotically periodic functions, it is easy to see that e-periodic functions belong exactly to this class.

In the following, we use $e(x)$ to denote $e^{2\pi ix}$ for simplicity, and $1_{S}$ to denote the indicator of a predicate $S$,
that is $1_{S}=1$ when $S$ is true and $1_{S}=0$ when $S$ is false. It is not hard to check that $e(\sqrt{n})$ is an e-periodic function of e-period 1. Note that arithmetic functions satisfying $f(n)=f(n+1)$ for all $n$ must be constant ones.
Thus e-periodic functions are far from periodic ones. In the following, we shall construct e-periodic functions with e-period $k$,
for any $k\geq 1$.
\begin{example}\label{random asymptotically periodic functions}{\rm
Let $\{m_{j}\}_{j=1}^{\infty}$ and $\{n_{j}\}_{j=1}^{\infty}$ be two sequences of positive integers with
$\lim_{j\rightarrow \infty}m_{j}=\lim_{j\rightarrow \infty}n_{j}=\infty$.
For any given $q$, choose $\alpha=\{0,1,\ldots,1\},\beta=\{1,0,\ldots,0\}$ as two vectors of length $q$. We construct the function $f$ (written as $\{f(n)\}_{n=1}^{\infty}$) successively:
 \[\underbrace{\alpha\alpha\cdot\cdot\cdot\alpha}_{m_{1}}\underbrace{\beta\beta\cdot\cdot\cdot\beta}_{n_{1}}
\underbrace{\alpha\alpha\cdot\cdot\cdot\alpha}_{m_{2}}\underbrace{\beta\beta\cdot\cdot\cdot\beta}_{n_{2}}\cdot\cdot\cdot.\]
Then $f$ is an e-periodic function with e-period $q$, and so a strongly asymptotically periodic function.}
\end{example}
The proof of the above fact is more involved. Here are some details.

By $\lim_{j\rightarrow \infty}m_{j}=\lim_{j\rightarrow \infty}n_{j}=\infty$, for any $\epsilon>0$,
there is an $j_{0}$ such that $qm_{j}, qn_{j}>\frac{1}{\epsilon}+1$  when $j> j_{0}$.
Then the number of $n$ between $1$ and $N$ satisfying $f(n+q)\neq f(n)$ is less than $2qj_{0}+qN\epsilon$.
Moreover, $\lim_{N\rightarrow \infty}\frac{1}{N}\sum_{n=1}^{N}|f(n+q)-f(n)|^2\leq \lim_{N\rightarrow \infty}(\frac{2qj_{0}}{N}+q\varepsilon)\leq q\varepsilon$.
Since $\epsilon$ is arbitrarily small, $\lim_{N\rightarrow \infty}\frac{1}{N}\sum_{n=1}^{N}|f(n+q)-f(n)|^2=0$.
It is easy to see that for any positive integer $l\leq q-1$, $\lim_{N\rightarrow \infty}\frac{1}{N}\sum_{n=1}^{N}|f(n+l)-f(n)|^2\neq 0$.
Hence $f$ is an e-periodic function with e-period $q$.

In Example \ref{random asymptotically periodic functions},
if the two sequences $\{m_{j}\}_{j=1}^{\infty}$ and $\{n_{j}\}_{j=1}^{\infty}$ further satisfy
$\lim_{j\rightarrow \infty}\frac{m_{j}}{n_{j}}=a\neq 0$, then we can show that $f$ is not the weak limit of periodic functions. That is for any mean state $E$, there does not exist a sequence $\{f_{n}\}_{n=1}^{\infty}$ of periodic functions,  such that the limit of $f-f_{n}$ is zero in $\mathcal{H}_{E}$.

Using a similar argument to the proof of Example \ref{random asymptotically periodic functions}, we have the following result.
\begin{example}\label{reflect the distribution of moebius in short intervals}
Let $\{N_{j}\}_{j=0}^{\infty}$ be a sequence of natural numbers with $N_{0}=0$ and $\lim_{j\rightarrow \infty}(N_{j+1}-N_{j})=\infty$. Let $q\geq 1$ and $a\geq 0$ be given integers. Define $f(n)$ to be $a_{j}1_{n\equiv a(mod~q)}$ when $N_{j}\leq n<N_{j+1}$ for $j=0,1,\ldots$, where $\{a_{j}\}_{j=0}^{\infty}$ is a sequence of complex numbers with $\sup_{j}|a_{j}|<\infty$. Then $f$ is an e-periodic function with e-period $q$, and so a strongly asymptotically periodic function.
\end{example}
By Definitions \ref{def:asymptotically periodic} and \ref{def:strongly asymptotically periodic}, e-periodic and strongly asymptotically periodic functions are asymptotically periodic. There are many asymptotically periodic functions that are far from e-periodic ones. For example, if $\theta$ is irrational then $f(n)=e(n\theta)$ is asymptotically periodic. But it is not the weak (or $l^2$-) limit of e-periodic functions. In fact, we have the following result.
\begin{proposition}
Let $\theta$ be an irrational number and  $f(n)=e(n\theta)$. Then $f$ is orthogonal to all e-periodic functions, that is for any e-periodic function $g$, $E(f\overline{g})=0$ holds for all mean states $E$.
\end{proposition}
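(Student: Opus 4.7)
My plan is to exploit the eigenfunction-like identity $A^{n_0}f = e(n_0\theta) f$ together with the $A$-invariance of the mean state $E$. Since $g$ is e-periodic with some e-period $n_0 \geq 1$, we have $\|g - A^{n_0}g\|_E = 0$, so in the Hilbert space $\HHH_E$ the vectors $g$ and $A^{n_0}g$ coincide. This will let me compare $\langle f,g\rangle_E$ with $\langle A^{n_0}f, A^{n_0}g\rangle_E$ and derive a scalar equation that forces $\langle f,g\rangle_E = 0$.

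Concretely, the steps are the following. First, observe pointwise that $A^{n_0}f(n) = e((n+n_0)\theta) = e(n_0\theta)\, f(n)$, so $A^{n_0}f = e(n_0\theta) f$ as elements of $l^\infty(\N)$ (and hence of $\HHH_E$). Second, use that $E$ is $A$-invariant to compute
\[
\langle A^{n_0}f, A^{n_0}g\rangle_E = E\bigl((A^{n_0}f)\,\overline{A^{n_0}g}\bigr) = E\bigl(A^{n_0}(f\overline{g})\bigr) = E(f\overline{g}) = \langle f,g\rangle_E.
\]
Third, since $g = A^{n_0}g$ in $\HHH_E$, Cauchy--Schwarz (applied to $A^{n_0}g - g$, which has zero $\HHH_E$-norm) gives $\langle A^{n_0}f, A^{n_0}g\rangle_E = \langle A^{n_0}f, g\rangle_E$. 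Combining with the first step,
\[
\langle f,g\rangle_E = \langle A^{n_0}f, g\rangle_E = e(n_0\theta)\,\langle f,g\rangle_E.
\]
Finally, since $\theta$ is irrational and $n_0 \geq 1$, $e(n_0\theta) \neq 1$, so $(1 - e(n_0\theta))\langle f,g\rangle_E = 0$ forces $\langle f,g\rangle_E = E(f\overline{g}) = 0$. As this holds for every mean state $E$, $f$ is orthogonal to $g$.

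There is no real obstacle here; the only point that requires a line of care is the passage from $A^{n_0}g = g$ in $\HHH_E$ (which is an equality of cosets, not of arithmetic functions) to $\langle A^{n_0}f, A^{n_0}g\rangle_E = \langle A^{n_0}f, g\rangle_E$, which is justified by the continuity of the inner product on $\HHH_E$ and the fact that $\|A^{n_0}g - g\|_E = 0$. Everything else is a direct computation using the pointwise eigenvalue relation for $f$ and the definition of $A$-invariance.
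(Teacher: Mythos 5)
Your proof is correct, and it takes a genuinely different route from the paper's. The paper averages over many shifts: writing $\langle f,g\rangle_E = \langle \frac{1}{m}\sum_{l=1}^m A^{lk}f,\, g\rangle_E$ and bounding $\|\frac{1}{m}\sum_{l=1}^m A^{lk}f\|_E$ via the geometric-sum estimate $\bigl|\frac{1}{m}\sum_{l=1}^m e(lk\theta)\bigr| < \epsilon$, then using Cauchy--Schwarz to conclude $|\langle f,g\rangle_E| < \epsilon\|g\|_{l^\infty}$ for all $\epsilon > 0$. You instead use a single shift and the exact eigenvalue relation $A^{n_0}f = e(n_0\theta)\,f$, combined with $A$-invariance of $E$ and the equality $g = A^{n_0}g$ in $\mathcal{H}_E$, to obtain the scalar identity $\langle f,g\rangle_E = e(n_0\theta)\langle f,g\rangle_E$; since $\theta$ is irrational, $e(n_0\theta) \neq 1$, and orthogonality follows immediately. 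Your argument is shorter and eliminates both the $\epsilon$-limit and the exponential-sum estimate. The paper's averaging version is a bit more robust in principle (it would work for any $f$ whose Ces\`aro averages $\frac{1}{m}\sum_{l=1}^m A^{lk}f$ tend to zero in $\|\cdot\|_E$, even without an exact eigenvalue relation), but for $f(n) = e(n\theta)$ the eigenvalue identity is available and your proof exploits it to better effect.
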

\begin{proof}
Suppose that the e-period of $g$ is $k$. Given a mean state $E$, by the $A$-invariance of $E$, $E(f\overline{g})=\<f,g\>_{E}=\<A^{lk}f, A^{lk}g\>_{E}=\<A^{lk}f, g\>_{E}$ for any $l\geq 1$. Thus $\<f,g\>_{E}=\<\frac{1}{m}\sum_{l=1}^{m}A^{lk}f,g\>_{E}$.
For any $\epsilon> 0$, we can choose a sufficiently large integer $m$ such that $|\frac{1}{m}\sum_{l=1}^{m}e((n+lk)\theta)|=|\frac{1}{m}\sum_{l=1}^{m}e(lk\theta)|<\epsilon$ for any $n\in \N$. Hence $\|\frac{1}{m}\sum_{l=1}^{m}A^{lk}f\|_{E}<\epsilon$.
It follows from the Cauchy-Schwarz inequality that $|\<f,g\>_{E}|< \epsilon\|g\|_{l^{\infty}}$. Letting $\epsilon\rightarrow 0$. Then $\<f,g\>_{E}=0$.
\end{proof}
Next, we provide another example of strongly asymptotically periodic function.
\begin{example}
Let $f\in l^{\infty}(\mathbb{N})$. Suppose that the closure of $\{(f(n),f(n+1),\ldots):n\in \mathbb{N}\}$ in $\prod_{\mathbb{N}}\overline{f(\mathbb{N})}$, endowed with the product topology, is countable. Then $f$ is a strongly asymptotically periodic function.
\end{example}
In the following, we give some detailed argument for the above nontrivial fact.
Denote $\mathcal{A}_{f}$ as the anqie generated by $f$ and
$X_{f}$ as the maximal ideal space of $\mathcal{A}_{f}$. By Theorem \ref{the method to compute the anqie}, $X_{f}$ is homeomorphic to the closure of $\{(f(n),f(n+1),\ldots):n\in \mathbb{N}\}$ in $\prod_{\mathbb{N}}\overline{f(\mathbb{N})}$ and so is a countable space. Let $E$ be a mean state on $l^{\infty}(\mathbb{N})$. Then the restriction $E$ on $\mathcal{A}_{f}$ is an invariant state on $\mathcal{A}_{f}$. By Theorem \ref{the measure induced by a state}, there is a $\sigma_{A}$-invariant probability measure $\nu$ on $X_{f}$ such that
$E(g)=\int_{X_{f}}g(x)~d\nu$  for any $g\in \mathcal{A}_{f}$. Since $X_{f}$ is a countable and compact metric space, $\nu$ must be an atomic measure. Assume that $\nu$ is supported at $x_1,x_2,\ldots$ in $X_f$.
For each $x_i$, there are two nature numbers $s_{i}$ and $t_{i}$ such that $A^{-s_{i}}\{x_i\}\cap A^{-t_{i}}\{x_i\}
\neq \emptyset$.
Thus there is a $k_{i}$ such that $A^{k_{i}}x_{i}=x_{i}$. Set $n_{j}=\prod_{i=1}^{j}k_{i}$. So for each $l\geq 1$,
\begin{eqnarray*}
       \|f-A^{ln_{j}}f\|^2_{E} &=& E(|f-A^{ln_{j}}f|^2) \\
       &=&\int_{X_{f}}|(f-f\circ A^{ln_{j}})(x)|^2~d\nu \\
        &=&\sum_{m=1}^{\infty}|f(x_{m})-f\circ A^{ln_{j}}(x_{m})|^2~\nu(\{x_m\})  \\
        &=&\sum_{m=j+1}^{\infty} |f(x_{m})-f\circ A^{ln_{j}}(x_{m})|^2~\nu(\{x_{m}\}).
     \end{eqnarray*}
Then
$$\|f-A^{ln_{j}}f\|^2_{E} \leq
4\|f\|_{l^{\infty}}^2
\sum_{m=j+1}^{\infty}\nu(\{x_{m}\})\rightarrow 0
$$
as $j$ goes to $\infty$. Hence $f$ is a strongly asymptotically periodic function.

\begin{proposition}\label{the tested property of moebius}
The M\"obius function is disjoint from all strongly asymptotically periodic functions if and only if  for any given integer $q\geq 1$,
\begin{equation}\label{20210409}
\limsup_{N\rightarrow \infty}\frac{1}{N}\sum_{n=1}^{N}\Bigg|\sum_{l=1}^{h}\mu(n+ql)\Bigg|^2=o(h^2).
\end{equation}
\end{proposition}
\begin{proof}
We first prove $``\Rightarrow "$ part. Given integers $q\geq 1$ and $a\geq 0$. Take $a_{j}=e(\theta_{j})$ such that \[\sum_{N_{j}\leq n< N_{j+1}}\mu(n)e(\theta_{j})1_{n\equiv a(mod~q)}=\Bigg|\sum_{\substack{N_{j}\leq n< N_{j+1}\\n\equiv a(mod~q)}}\mu(n)\Bigg|.\] Then the M\"obius disjointness of $f(n)$ for any $f(n)$ defined as in Example \ref{reflect the distribution of moebius in short intervals} is equivalent to \[\lim_{m\rightarrow \infty}\frac{1}{N_{m}}\sum_{j=0}^{m-1}\Bigg|\sum_{\substack{N_{j}\leq n<N_{j+1}\\n\equiv a(mod~q)}}\mu(n)\Bigg|=0\] for any sequence $\{N_{j}\}_{j=0}^{\infty}$ with $N_{0}=0$ and $\lim_{j\rightarrow \infty}(N_{j+1}-N_{j})=\infty$. This is further equivalent to (see e.g., \cite[Lemma 5.2]{GW})
\begin{equation}\label{20210410}
\lim_{h\rightarrow \infty}\limsup_{N\rightarrow \infty}\frac{1}{Nh}\sum_{m=1}^{N} \Bigg|\sum_{\substack{n=m\\n\equiv a(mod~q)}}^{m+h}\mu(n)\Bigg|=0.
\end{equation}
It is not hard to check that for $N$ large enough,
\begin{equation}\label{an identity0409}
\sum_{n=1}^{N}\Bigg|\sum_{l=1}^{h}\mu(n+ql)\Bigg|=\frac{1}{q}\sum_{a=1}^{q}
  \sum_{m=1}^{N}\Bigg|\sum_{\substack{n=m\\n\equiv a(mod~q)}}^{m+hq}\mu(n)\Bigg|+O(N).
\end{equation}
Note that $q$ is given and by the trivial estimate,
\[\limsup_{N\rightarrow \infty}\frac{1}{N}\sum_{n=1}^{N}\Bigg|\frac{1}{h}\sum_{l=1}^{h}\mu(n+ql)\Bigg|^2\leq \limsup_{N\rightarrow \infty}\frac{1}{N}\sum_{n=1}^{N}\Bigg|\frac{1}{h}\sum_{l=1}^{h}\mu(n+ql)\Bigg|.\]
By equations (\ref{20210410}) and (\ref{an identity0409}), we obtain equation (\ref{20210409}).

We then prove $``\Leftarrow "$ part. Let $f(n)$ be a strongly asymptotically periodic function. It suffices to show that for any mean state $E$, $\<f,\mu\>_{E}=E(f\mu)=0$. By the strongly asymptotical periodicity of $f$,
for any $\epsilon>0$, there is a positive integer $n_{0}$ such that
\begin{equation}\label{1}
\|f-A^{ln_{0}}f\|_{E}< \epsilon
\end{equation}
for any $l\in \N$. By equation (\ref{20210409}), there is a sufficiently large $l_{0}$ such that
\[\limsup_{N\rightarrow \infty}\frac{1}{N}\sum_{n=1}^{N}\Bigg|\frac{1}{l_{0}}\sum_{l=1}^{l_{0}}\mu(n+ln_{0})\Bigg|^2<\epsilon.\]
This implies
\begin{equation}\label{20409}
\|\frac{1}{l_{0}}\sum_{l=1}^{l_{0}}A^{ln_{0}}\mu\|_{E}< \epsilon.
\end{equation}
Note that for any $l\in \mathbb{N}$, $\<f,\mu\>_{E}=\<A^{ln_{0}}f,A^{ln_{0}}\mu\>_{E}=\<A^{ln_{0}}f-f,A^{ln_{0}}\mu\>_{E}+\<f,A^{ln_{0}}\mu\>_{E}$.
Then
\begin{equation}\label{30409}
\<f,\mu\>_{E}=\frac{1}{l_{0}}\sum_{l=1}^{l_{0}}\<A^{ln_{0}}f-f,A^{ln_{0}}\mu\>_{E}+\<f,\frac{1}{l_{0}}\sum_{l=1}^{l_{0}}A^{ln_{0}}\mu\>_{E}.
\end{equation}
By the Cauchy-Schwarz inequality and equations (\ref{1}), (\ref{20409}), (\ref{30409}), we conclude that $|\<f,\mu\>_{E}|< \epsilon(\|f\|_{l^{\infty}}+1)$ for any $\epsilon>0$. Letting $\epsilon\rightarrow 0$, $\<f,\mu\>_{E}=0$.
\end{proof}

There are many arithmetic functions that are not asymptotically periodic, such as $f(n)=e(n^2\theta)$ with $\theta$ irrational. This follows from the fact $\|f-A^{m}f\|_{E}=\sqrt{2}$ for any $m\geq 1$ and any mean state $E$.
Moreover, this function is orthogonal to all asymptotically periodic functions, which is claimed in \cite[Theorem 5.9]{LG} without proof. Here we give the proof in the following theorem.
\begin{theorem}\label{f is orthogonal to any asymptotically periodic function}
Let $f(n)=e(n^2\theta)$ with $\theta$ irrational. We have

(\romannumeral1) For any $l\neq m$, $\<A^{l}f,A^{m}f\>_{E}=0$ for any mean state $E$.

(\romannumeral2) The function $f$ is orthogonal to all asymptotically periodic functions in $l^{\infty}(\mathbb{N})$.
\end{theorem}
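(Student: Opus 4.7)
For part (i), the plan is a direct Weyl equidistribution calculation. Using $A$-invariance of the mean state $E$, assume $l>m$ and set $k=l-m\geq 1$. Then
\[
\langle A^{l}f, A^{m}f\rangle_{E}=\langle A^{k}f,f\rangle_{E}=E\big(n\mapsto e((n+k)^{2}\theta-n^{2}\theta)\big)=e(k^{2}\theta)\,E\big(n\mapsto e(2kn\theta)\big).
\]
Since $\theta$ is irrational and $k\neq 0$, the number $2k\theta$ is irrational, so by Weyl's equidistribution theorem $\frac{1}{N}\sum_{n=0}^{N-1}e(2kn\theta)\to 0$. This forces $E(n\mapsto e(2kn\theta))=0$ for every mean state, proving (i). In particular $\|f\|_{E}=1$ and the family $\{A^{n}f:n\in\N\}$ is orthonormal in $\mathcal{H}_{E}$.

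For part (ii), let $g$ be asymptotically periodic and let $E$ be any mean state; pick $\{n_{j}\}_{j=1}^{\infty}$ with $\|g-A^{n_{j}}g\|_{E}\to 0$. The key identity is obtained from $A$-invariance of $E$:
\[
\langle f,g\rangle_{E}=\langle A^{n_{j}}f,A^{n_{j}}g\rangle_{E}=\langle A^{n_{j}}f,g\rangle_{E}+\langle A^{n_{j}}f,A^{n_{j}}g-g\rangle_{E}.
\]
The second summand is controlled by Cauchy--Schwarz: $|\langle A^{n_{j}}f,A^{n_{j}}g-g\rangle_{E}|\leq \|f\|_{E}\,\|A^{n_{j}}g-g\|_{E}\to 0$. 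So it suffices to show $\langle A^{n_{j}}f,g\rangle_{E}\to 0$ along some subsequence.

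At this point I would invoke part (i). Once we know that the $n_{j}$ can be taken pairwise distinct, the family $\{A^{n_{j}}f\}_{j\ge 1}$ is orthonormal in $\mathcal{H}_{E}$, so Bessel's inequality yields $\sum_{j}|\langle A^{n_{j}}f,g\rangle_{E}|^{2}\leq \|g\|_{E}^{2}<\infty$, and in particular $\langle A^{n_{j}}f,g\rangle_{E}\to 0$. Combined with the previous display this gives $\langle f,g\rangle_{E}=0$, and since $E$ was arbitrary $f$ is orthogonal to $g$.

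The one point that needs care---and the main obstacle---is the reduction to a sequence of distinct $n_{j}$, since the definition of asymptotically periodic only provides some sequence. If $\{n_{j}\}$ already has infinitely many distinct values, pass to a subsequence. Otherwise some value $n_{0}$ is repeated infinitely often, which forces $g=A^{n_{0}}g$ in $\mathcal{H}_{E}$; then $g=A^{kn_{0}}g$ for every $k\geq 1$, so $\{kn_{0}\}_{k\ge 1}$ is a distinct sequence still witnessing the asymptotic periodicity, and the Bessel step applies. In either case the argument closes, and in fact the orthogonality is uniform in $E$ as required.
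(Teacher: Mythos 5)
Your argument is essentially the same as the paper's: part (i) by Weyl equidistribution, and part (ii) by combining $A$-invariance of $E$, Cauchy--Schwarz, and Bessel's inequality applied to the orthonormal family $\{A^{n_j}f\}$ furnished by part (i) (the paper phrases this as a proof by contradiction, you argue directly, but it is the same mechanism). Your closing remark about passing to distinct $n_j$ is a small but genuine point of extra care that the paper leaves implicit --- the paper's Bessel step tacitly treats $\{A^{n_j}f:j\ge 1\}$ as an infinite orthogonal set, and your reduction (pass to a subsequence, or if some $n_0$ repeats infinitely often conclude $g=A^{n_0}g$ in $\mathcal H_E$ and use $\{kn_0\}_{k\ge 1}$) legitimately closes that.
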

\begin{proof}
(\romannumeral1) This result is true as
\[
\<A^{l}f,A^{m}f\>_{E}=e((l^2-m^2)\theta)\cdot\lim_{N\rightarrow \infty}\frac{1}{N}\sum_{j=1}^{N}e((2l-2m)j\theta)=0.
\]
(\romannumeral2) Let $g$ be an asymptotically periodic function.
Assume on the contrary that $|\<f,g\>_{E}|>\delta$ for some  $\delta>0$. By definition, there is a sequence $\{n_{j}\}_{j=1}^{\infty}$
such that  $\lim_{j\rightarrow \infty}\|A^{n_{j}}g-g\|_{E}=0$.
So when $j$ is large enough, $\|A^{n_{j}}g-g\|_{E}<{\delta}/{2}$. By the Cauchy-Schwarz inequality and the fact that $\|f\|_E=1$, we have
\begin{eqnarray*}
      |\<g,A^{n_{j}}f\>_{E}| &=& |\<g-A^{n_{j}}g,A^{n_{j}}f\>_{E}+\<A^{n_{j}}g,A^{n_{j}}f\>_{E}| \\
       &\geq & |\<A^{n_{j}}g,A^{n_{j}}f\>_{E}|-|\<g-A^{n_{j}}g,A^{n_{j}}f\>_{E}| \\
       &>& {\delta}/{2}.
\end{eqnarray*}
It follows from (\romannumeral1) that the set $\{A^{n_{j}}f: j=1,2,\ldots\}$ is an orthogonal set in $\mathcal{H}_{E}$.
By Bessel's inequality, $\|g\|_{E}^2\geq \sum_{j=1}^{\infty}|\<g,A^{n_{j}}f\>_{E}|^2=\infty$. This contradicts the fact that $g\in l^\infty(\N)$. Hence $\<f,g\>_{E}=0$ for any mean state $E$.
\end{proof}

\begin{remark}{\rm
Based on the above proof,
an arithmetic function is orthogonal to all asymptotically periodic functions if it satisfies condition (\romannumeral1) of Theorem \ref{f is orthogonal to any asymptotically periodic function}.
}\end{remark}

\begin{theorem}\label{the product of the translations of the Moebius function}
Let $r\geq 2$ and $\mu_{r}(n)=1$ if $n$ is $r$-th power-free and zero otherwise. For any $s\geq 1$ and $m_{1},\ldots,m_{s}\in \mathbb{N}$, $\prod_{i=1}^{s}A^{m_{i}}(\mu_{r})$ is strongly asymptotically periodic.
\end{theorem}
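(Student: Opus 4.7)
My plan is to approximate $\mu_r$ by a family of \emph{exactly periodic} truncations. For a parameter $y>0$, define
\[
\mu_r^{(y)}(n) := \prod_{p\le y}\mathbf 1_{p^r\nmid n},
\]
which is $\{0,1\}$-valued and periodic with period $P_y:=\prod_{p\le y} p^r$. Because of this, $A^{lP_y}\mu_r^{(y)}=\mu_r^{(y)}$ holds identically in $l^\infty(\N)$ for every $l\in\N$. The product $f^{(y)}:=\prod_{i=1}^s A^{m_i}\mu_r^{(y)}$ is likewise periodic of period $P_y$, so $A^{lP_y}f^{(y)}=f^{(y)}$ for all $l\in\N$. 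If I can show $\|f-f^{(y)}\|_E\to 0$ as $y\to\infty$, where $f:=\prod_{i=1}^s A^{m_i}\mu_r$, then putting $n_j:=P_{y_j}$ for any sequence $y_j\to\infty$ will give, by the $A$-invariance of $E$ and the triangle inequality,
\[
\|f-A^{ln_j}f\|_E\le \|f-f^{(y_j)}\|_E + \|A^{ln_j}(f^{(y_j)}-f)\|_E = 2\|f-f^{(y_j)}\|_E,
\]
a bound \emph{uniform in $l\in\N$} that tends to $0$, establishing Definition \ref{def:strongly asymptotically periodic}.

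The quantitative input is the estimate
\[
\|\mu_r-\mu_r^{(y)}\|_E^2\le \sum_{p>y}\frac{1}{p^r},
\]
which tends to $0$ as $y\to\infty$ thanks to the hypothesis $r\ge 2$. To prove it, observe that $\mu_r\le \mu_r^{(y)}$ pointwise and both are $\{0,1\}$-valued, so $|\mu_r-\mu_r^{(y)}|^2 = \mu_r^{(y)}-\mu_r\le \sum_{p>y}\mathbf 1_{p^r\mid n}$. The function $n\mapsto \mathbf 1_{p^r\mid n}$ is periodic of period $p^r$ with density $1/p^r$, so $E(\mathbf 1_{p^r\mid n})=1/p^r$ for every mean state $E$ (immediate from Definition \ref{def of mean state}); summing over $p>y$ yields the desired bound.

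To pass from $\mu_r$ to the full product, I telescope: since every factor $A^{m_i}\mu_r$ and $A^{m_i}\mu_r^{(y)}$ has sup-norm at most $1$,
\[
|f-f^{(y)}|\le \sum_{j=1}^s |A^{m_j}(\mu_r-\mu_r^{(y)})|,
\]
so the triangle inequality in $\mathcal H_E$ together with $A$-invariance of $E$ gives $\|f-f^{(y)}\|_E\le s\,\|\mu_r-\mu_r^{(y)}\|_E$. Combining everything, $\|f-A^{lP_y}f\|_E\le 2s\,\|\mu_r-\mu_r^{(y)}\|_E$, which tends to $0$ uniformly in $l\in\N$ as $y\to\infty$.

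The main (and essentially only) subtlety is securing uniformity in $l$: this is exactly why one must use an \emph{exactly} periodic truncation $\mu_r^{(y)}$ rather than some approximately periodic surrogate, so that the identity $A^{lP_y}f^{(y)}=f^{(y)}$ holds on the nose. As a bonus, the sequence $\{n_j\}$ produced by this scheme does not depend on $E$, which is slightly stronger than what Definition \ref{def:strongly asymptotically periodic} demands.
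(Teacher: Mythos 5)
Your proof is correct, and it takes a genuinely different route from the paper. The paper reduces to showing $\mu_r$ alone is strongly asymptotically periodic (leaving the passage to the product implicit), picks the same sequence $n_j=\prod_{p\le p_j}p^r$, and then invokes Mirsky's exact two-point correlation formula
\[
\<\mu_r,A^m\mu_r\>_E=\prod_p\Big(1-\frac{2}{p^r}\Big)\prod_{p^r\mid m}\Big(1+\frac{1}{p^r-2}\Big),
\]
observing that $\<\mu_r,A^{ln_j}\mu_r\>_E\ge\<\mu_r,A^{n_j}\mu_r\>_E$ (since $n_j\mid ln_j$) to get the required uniformity in $l$, and then letting $j\to\infty$ in the resulting Euler-product expression for $\|\mu_r-A^{n_j}\mu_r\|_E^2$. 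You instead build an exactly $P_y$-periodic truncation $\mu_r^{(y)}$, bound $\|\mu_r-\mu_r^{(y)}\|_E^2\le\sum_{p>y}p^{-r}$ by an elementary density count (using only that $\mathbf 1_{p^r\mid n}$ is periodic of density $1/p^r$, so $E(\mathbf 1_{p^r\mid n})=1/p^r$ for any mean state), and get uniformity in $l$ for free from the identity $A^{lP_y}f^{(y)}=f^{(y)}$. You also spell out the telescoping step for the $s$-fold product, which the paper compresses into ``it suffices to prove that $\mu_r$ is strongly asymptotically periodic.'' The net effect: your argument avoids any appeal to Mirsky's theorem, is fully self-contained, and delivers the same sequence $\{n_j\}$ as the paper; the paper's route is shorter to state once Mirsky's formula is granted, and gives the exact asymptotics of $\|\mu_r-A^{n_j}\mu_r\|_E^2$ rather than an upper bound. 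As you note, your sequence is manifestly independent of $E$, matching the stronger variant of the definition mentioned in the paper's footnote.
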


\begin{proof}
It suffices to prove that $\mu_{r}$ is strongly asymptotically periodic.
Let $p_j$ be the $j$-th prime, define the sequence $\{n_j\}_{j=1}^\infty$ by  $n_{j}=p_{1}^rp_{2}^r\cdots p_{j}^r$.
By \cite{MIR}, for any positive integer $m$, we have for any mean state $E$,
\[
\<\mu_{r},A^{m}\mu_{r}\>_{E}=\lim_{N\rightarrow \infty}\frac{1}{N}\sum_{n=1}^{N}\mu_{r}(n)\mu_{r}(n+m)=\prod_{p}(1-\frac{2}{p^r})\prod_{p^r|m}(1+\frac{1}{p^r-2}).
\]
So for any positive integer $l$, $\<\mu_{r},A^{ln_{j}}\mu_{r}\>_{E}\geq \<\mu_{r},A^{n_{j}}\mu_{r}\>_{E}$. Moreover,
\begin{align*}
  \|\mu_{r}-A^{ln_{j}}\mu_{r}\|_{E}^2\leq & \|\mu_{r}-A^{n_{j}}\mu_{r}\|_{E}^2=2\<\mu_{r},\mu_{r}\>_{E}-2\<\mu_{r},A^{n_{j}}\mu_{r}\>_{E} \\
  =&2\sum_{n=1}^{\infty}\frac{\mu(n)}{n^{r}}-2\prod_{p}(1-\frac{1}{p^r})\prod_{p>p_{j}}(1+\frac{1}{p^r-2})^{-1}\\
  =&2\prod_{p}(1-\frac{1}{p^r})(1-\prod_{p>p_{j}}
(1+\frac{1}{p^r-2})^{-1})
\end{align*}
tends to $0$ when $j$ goes to infinity. Then $\{\mu_{r}-A^{ln_{j}}\mu_{r}\}_{j=1}^{\infty}$ converges to zero in $\mathcal{H}_{E}$ uniformly with respect to all $l\in\N$.\end{proof}

Now we prove Proposition \ref{non zero entropy example}.
\begin{proof}[Proof of Proposition \ref{non zero entropy example}]
By Theorem \ref{the product of the translations of the Moebius function}, $f=A^{m_{1}}(\mu^2)\cdots A^{m_{s}}(\mu^2)$ is strongly asymptotically periodic. Let $(X_{f},\sigma_{A})$ be the anqie generated by $f$. Then by Theorem \ref{the method to compute the anqie}, we describe $X_{f}$ as a closed subspace $\widetilde{X_{f}}$ of $\{0,1\}^{\mathbb{N}}$ and represent $\sigma_{f}$ as the Bernoulli shift $B$ on the space. It follows from \cite{Sar} that $(\widetilde{X_{f}},B)$ has positive topological entropy and then $(X_{f},\sigma_{A})$ has positive entropy. Assume on the contrary that there is a dynamical system $(X,T)$ with the topological entropy of $T$ zero, such that $f(n)=F(T^{n}x_{0})$ for some $F\in C(X)$ and $x_{0}\in X$. By Proposition \ref{a factor}, the topological entropy of $T$ is greater than or equal to that of $\sigma_{A}$. This contradicts the assumption that the topological entropy of $T$ is zero. Hence $f(n)$ cannot be realized in any dynamical systems with zero topological entropy.
\end{proof}

In the next section, we shall see that the anqie of $\mathbb{N}$ generated by any asymptotically periodic function
is closely related to a rigid dynamical system.
\section{$\sigma_{A}$-invariant measures}\label{mean states}
\label{A-invariant measure induced by a state}
Suppose that $(X,\sigma_{A})$ (or $\A$) is an anqie of $\mathbb{N}$. It is a basic fact that a continuous map on $X$ is always (Borel) measurable. Then $\sigma_{A}$ is a measurable transformation.
We call a (Borel) measure $\nu$ on $X$ \emph{$\sigma_{A}$-invariant} if for any Borel set $F$ of $X$, $\nu(F)=\nu((\sigma_{A})^{-1}F)$.
In the following, we show that for any given invariant state on $\A$, there is an induced $\sigma_{A}$-invariant Borel probability measure on $X$.
\begin{theorem}\label{the measure induced by a state}
Let $(X,\sigma_{A})$ (or $\A$) be an anqie of $\mathbb{N}$. Suppose $\rho$ is an invariant state on $\mathcal{A}$.
Then there is a unique $\sigma_{A}$-invariant Borel probability measure $\nu$ on $X$ such that for any $g\in \mathcal{A}$,
\begin{equation}\label{the representation}
\rho(g)=\int_{X}g(x)~d\nu,
\end{equation}
where $g(x)$ is the image of $g$ under the Gelfand transform $($see equation (\ref{Gelfand transform})$)$.
\end{theorem}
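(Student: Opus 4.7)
The plan is to invoke the Gelfand transform together with the Riesz--Markov--Kakutani representation theorem, and then use the $A$-invariance of $\rho$ to upgrade the resulting probability measure to a $\sigma_A$-invariant one.

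First, since $\mathcal{A}$ is a unital abelian C*-algebra, the Gelfand transform $\gamma:\mathcal{A}\to C(X)$ of (\ref{Gelfand transform}) is a $*$-isomorphism. Define $\tilde{\rho}:C(X)\to\mathbb{C}$ by $\tilde{\rho}(F)=\rho(\gamma^{-1}(F))$. Since $\rho$ is a state on $\mathcal{A}$, the functional $\tilde{\rho}$ is positive and linear on $C(X)$ with $\tilde{\rho}(1)=1$. The Riesz--Markov--Kakutani representation theorem (in its version for compact Hausdorff spaces, which is essential here as $X$ need not be metrizable when $\mathcal{A}$ is not countably generated) then produces a unique regular Borel probability measure $\nu$ on $X$ such that $\tilde{\rho}(F)=\int_X F\,d\nu$ for every $F\in C(X)$. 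Translating back through $\gamma$ yields the identity (\ref{the representation}), and uniqueness of $\nu$ is immediate from the uniqueness clause of Riesz--Markov--Kakutani applied to the positive functional $\tilde{\rho}$.

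The only remaining point is $\sigma_A$-invariance of $\nu$. From the construction of $\sigma_A$ in the proof of Proposition \ref{anqie is a shif invariant algebra}, for each $f\in\mathcal{A}$ and each $\tau\in X$ one has $(\sigma_A\tau)(f)=\tau(Af)$, which rephrases as the intertwining identity $\gamma(Af)=\gamma(f)\circ\sigma_A$ on $X$. Hence, for any $F=\gamma(f)\in C(X)$,
\begin{equation*}
\int_X F\circ\sigma_A\,d\nu=\tilde{\rho}(\gamma(Af))=\rho(Af)=\rho(f)=\tilde{\rho}(F)=\int_X F\,d\nu,
\end{equation*}
where the third equality uses the $A$-invariance of $\rho$. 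Thus the pushforward measure $(\sigma_A)_*\nu$ and $\nu$ agree on $C(X)$, and by the uniqueness part of Riesz--Markov--Kakutani they coincide as regular Borel probability measures on $X$. This gives $\nu(\sigma_A^{-1}(B))=\nu(B)$ for every Borel set $B$, as required.

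No step is deep; the main care is that $X$ may fail to be metrizable, so I would be explicit about using the general form of Riesz--Markov--Kakutani and regular Borel measures, rather than invoking a sequential or metric version. Beyond that, the argument is a straightforward combination of Gelfand duality with the representation theorem, plus a one-line use of the defining intertwining relation $\gamma\circ A=(\cdot\circ\sigma_A)\circ\gamma$.
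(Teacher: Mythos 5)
Your proposal is correct, and it is cleaner than the paper's argument, but it does differ in how $\sigma_A$-invariance is established. Both you and the paper obtain $\nu$ from the positive linear functional $\tilde\rho=\rho\circ\gamma^{-1}$ via Riesz--Markov--Kakutani, and both start from the computation $\int_X F\circ\sigma_A\,d\nu=\rho(Af)=\rho(f)=\int_X F\,d\nu$ for $F=\gamma(f)\in C(X)$. The paper then promotes this identity on $C(X)$ to all Borel sets by hand: it first shows $\nu(B)=0\Rightarrow\nu(\sigma_A^{-1}B)=0$ using inner regularity of $\nu$ and the formula $\nu(K)=\inf\{\rho(h):h\in C(X),\,h|_K=1\}$, and then handles general Borel sets by Lusin's theorem and dominated convergence. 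You instead observe that this identity says $(\sigma_A)_*\nu$ and $\nu$ integrate $C(X)$ identically and invoke RMK uniqueness, which is shorter and more conceptual.

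The one point your write-up glosses over is that RMK uniqueness is a statement about \emph{regular} Borel measures, so you must know that $(\sigma_A)_*\nu$ is regular before concluding $(\sigma_A)_*\nu=\nu$ on all Borel sets. This is true and not hard: if $L\subseteq\sigma_A^{-1}(B)$ is compact with $\nu(L)>\nu(\sigma_A^{-1}B)-\epsilon$, then $\sigma_A(L)$ is a compact subset of $B$ with $(\sigma_A)_*\nu(\sigma_A(L))=\nu(\sigma_A^{-1}\sigma_A(L))\geq\nu(L)$, giving inner regularity of the pushforward; and for a finite measure on a compact Hausdorff space inner regularity implies outer regularity. Since you explicitly flagged that $X$ need not be metrizable, you should make this regularity-of-the-pushforward step explicit rather than folding it into the phrase ``coincide as regular Borel probability measures.'' Notably, the paper's more laborious Lusin-based route avoids having to address this point at all, which is likely why the author chose it. Once you add the regularity observation, your proof is complete and arguably preferable.
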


\begin{proof}
Since $\rho$ is an invariant state on $\mathcal{A}$ and $\A\cong C(X)$, $\rho$ can be viewed as a state on $C(X)$
satisfying $\rho(f\circ \sigma_{A})=\rho(f)$ for any $f\in C(X)$.
By the Riesz representation theorem, there is a unique Borel measure $\nu$ on $X$, such that for any $f\in C(X)$,
\begin{equation}
\label{Riesz representation theorem}
\rho(f)=\int_{X}f(x)~d\nu.
\end{equation}
Moreover, $\nu$ is regular and it has the property that for any compact subset $K\subset X$,
\begin{equation}\label{the measure of K}
\nu(K)=\inf\{\rho(h):h\in C(X),h|_{K}=1\}.
\end{equation}
In the following, we show that $\nu(\sigma_{A}^{-1}(F))=\nu(F)$ for any Borel set $F$.
First we prove that if $\nu(F)=0$, then $\nu(\sigma_{A}^{-1}(F))=0$. In fact, by the regularity of $\nu$, for any $\epsilon>0$,
there is a compact set $K\subset \sigma_{A}^{-1}(F)$ such that
\begin{equation}\label{regularity inequality}
\nu(\sigma_{A}^{-1}(F))<\nu(K)+\epsilon.
\end{equation}
Note that $\sigma_{A}(K)\subset F$. So $\nu(\sigma_{A}(K))=0$. By equation (\ref{the measure of K}),
there is an $h\in C(X)$ such that $h|_{\sigma_{A}(K)}=1$ and $\rho(h)<\epsilon$. By equation (\ref{the measure of K}) again,
$\nu(K)\leq \rho(h\circ \sigma_{A})=\rho(h)< \epsilon$. Then $\nu(\sigma_{A}^{-1}(F))<2\epsilon$ by equation (\ref{regularity inequality}). Since $\epsilon$ can be arbitrarily small, $\nu(\sigma_{A}^{-1}(F))=0$.

Now we assume that $\nu(F)\neq 0$.   By Lusin's Theorem, there is a sequence $\{f_{n}\}_{n=1}^{\infty}$ in $C(X)$
and a Borel set $G$ with $\nu(G)=0$, such that $\|f_{n}\|\leq 1$ and  $\lim_{n\rightarrow \infty}f_{n}(x)=\chi_{F}(x)$ for any $x\in X\setminus G$,
where $\chi_{F}$ is the characteristic function supported on $F$. Thus $\lim_{n\rightarrow \infty}f_{n}\circ \sigma_{A}(x)=\chi_{F}\circ \sigma_{A}(x)$ for $x\in X\setminus \sigma_{A}^{-1}G$.
By the analysis in the above paragraph, $\nu(\sigma_{A}^{-1}G)=0$. By equation (\ref{Riesz representation theorem}) and the Lebesgue Dominated Convergence Theorem,
\[\lim_{n\rightarrow \infty}\rho(f_{n})=\lim_{n\rightarrow \infty}\int_{X}f_{n}(x)~d\nu=\nu(F)\]
and
\[\lim_{n\rightarrow \infty}
\rho(f_{n}\circ \sigma_{A})=\lim_{n\rightarrow \infty}\int_{X}f_{n}\circ \sigma_{A}(x)~d\nu=\nu(\sigma_{A}^{-1}(F)).\]
Since $\rho(f_{n})=\rho(f_{n}\circ A)$, we obtain $\nu(F)=\nu(\sigma_{A}^{-1}(F))$ as claimed.

Finally, it follows from $\rho(1)=1$ that $\nu(X)=1$. Thus $\nu$ is a $\sigma_{A}$-invariant Borel probability measure on $X$.
\end{proof}
We call the $\sigma_{A}$-invariant (Borel) probability measure $\nu$ given by equation (\ref{the representation}) \emph{the measure induced by $\rho$}. Suppose $\mathcal{A}$ is a countably generated anqie of $\mathbb{N}$, then by Proposition \ref{metrizable}, $(\mathcal{A}^{\sharp})_{1}$ is a compact metrizable space.
Hence for any mean state $E$ on $l^{\infty}(\N)$, there is a sequence $\{N_{m}\}_{m=1}^{\infty}$ of positive integers such that for any $g\in \A$,
$E(g)=\lim_{m\rightarrow \infty}\frac{1}{N_{m}}\sum_{n=0}^{N_{m}-1}g(n)$.
By Theorem \ref{the measure induced by a state}, there is a $\sigma_{A}$-invariant probability measure $\nu$ on $X$ such that for any $g\in \A$,
\begin{equation}\label{generic measure}
\lim_{m\rightarrow \infty}\frac{1}{N_{m}}\sum_{n=0}^{N_{m}-1}g(n)=\int_{X}g(x)d\nu.
\end{equation}
On the other hand, for any $x\in X$ define a Borel probability measure $\delta_{x}$ on $X$ such that for any Borel set $B$ in $\B$, $\delta_{x}(B)=1$ if $x\in B$, and $0$ otherwise. For each $N\geq 1$, define
\begin{equation}\label{eqDirac}
\delta_{N,x}=\frac{1}{N}\sum_{n=0}^{N-1}\delta_{(\sigma_{A})^{n}x}.
\end{equation}
It is easy to check that $\delta_{N,x}$ is a Borel probability measure on $X$.
Now fix $x=\iota(0)$, i.e., the multiplicative state on $\mathcal{A}$ given by $\iota(0): f\mapsto f(0)$ for any $f\in \mathcal{A}$. Then
\[\int_{X}g(x)d\delta_{N_{m},\iota(0)}=\frac{1}{N_{m}}\sum_{n=0}^{N_{m}-1}\int_{X}g(x)d\delta_{(\sigma_{A})^{n}(\iota(0))}=\frac{1}{N_{m}}\sum_{n=0}^{N_{m}-1}g(n)\]
holds for any $g\in \mathcal{A}$, correspondingly $g(x)\in C(X)$.
By equation (\ref{generic measure}),
\[\lim_{m\rightarrow \infty}\int_{X}g(x)d\delta_{N_{m},\iota(0)}=\int_{X}g(x)d\nu.\]
We call $\nu$ the \emph{(weak*) limit} of $\delta_{N_{m},\iota(0)}$. In \cite{EJMT5}, $\iota(0)$ is also called the \emph{quasi-generic for $\nu$ along $\{N_{m}\}_{m=1}^{\infty}$}.
\begin{remark} {\rm
Let $\mathcal{A}_{f}$ be the anqie generated by $f$ and $X_{f}$ the maximal ideal space of $\mathcal{A}_{f}$.
By Theorem \ref{the method to compute the anqie},
$X_{f}$ is the closure of $\{(f(n),f(n+1),\ldots):n\in \mathbb{N}\}$ in $\prod_{\mathbb{N}}\overline{f(\mathbb{N})}$.
Suppose that $\rho$ is an invariant state on $\mathcal{A}_{f}$, and $\nu$ the measure induced by $\rho$.
Naturally,  $\nu$ can be extended to a probability measure (denote by $\widetilde{\nu}$) on $\prod_{\mathbb{N}}\overline{f(\mathbb{N})}$, which is defined by $\widetilde{\nu}(F)=\nu(F\cap X_{f})$ for any Borel set $F$ of $\prod_{\mathbb{N}}\overline{f(\mathbb{N})}$.
It is easy to see that $\widetilde{\nu}$ is $B$-invariant, where $B$ is the Bernoulli shift on $\prod_{\mathbb{N}}\overline{f(\mathbb{N})}$.
In the following, we still use $\nu$ to denote $\widetilde{\nu}$ if it makes no ambiguity.
}\end{remark}
Next, we discuss the connection between asymptotically periodic functions and rigid dynamical systems.
\begin{theorem}\label{A is rigid}
Suppose that $f$ is an asymptotically periodic function. Let $(X_{f},\sigma_{A})$ (or, $\mathcal{A}_{f}$) be the anqie generated by $f$. Let $E$ be a mean state and $\nu$ the measure induced by $E$ on $X_{f}$. Then $(X_{f},\nu,\sigma_{A})$ is rigid.
\end{theorem}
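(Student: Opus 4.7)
The plan is to take the sequence $\{n_j\}$ provided by Definition \ref{def:asymptotically periodic} applied to $f$ and the given mean state $E$, and show that this same sequence witnesses rigidity of $(X_f,\nu,\sigma_A)$; that is, I will prove $\|g\circ\sigma_A^{n_j}-g\|_{L^2(\nu)}\to 0$ for every $g\in L^2(X_f,\nu)$.

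First I would translate the hypothesis from the semi-inner product on $\mathcal{A}_f\subset l^\infty(\mathbb{N})$ to the $L^2(\nu)$-inner product on $C(X_f)$. For any $h_1,h_2\in\mathcal{A}_f$, the product $h_1\overline{h_2}$ is again in $\mathcal{A}_f$, so by Theorem \ref{the measure induced by a state},
\[
\langle h_1,h_2\rangle_E = E(h_1\overline{h_2}) = \int_{X_f}\widetilde{h_1}\,\overline{\widetilde{h_2}}\,d\nu = \langle\widetilde{h_1},\widetilde{h_2}\rangle_{L^2(\nu)},
\]
where $\widetilde{\cdot}$ is the Gelfand transform; moreover, as in the proof of Proposition \ref{anqie is a shif invariant algebra}, $\widetilde{A^n h}=\widetilde{h}\circ\sigma_A^n$. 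Hence $\|A^{n_j}f-f\|_E\to 0$ becomes $\|\widetilde{f}\circ\sigma_A^{n_j}-\widetilde{f}\|_{L^2(\nu)}\to 0$.

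Next I would enlarge the class of functions for which the rigidity estimate holds. Let $R:=\{g\in L^2(X_f,\nu):\|g\circ\sigma_A^{n_j}-g\|_{L^2(\nu)}\to 0\}$. Using $\sigma_A$-invariance of $\nu$, a routine $3\varepsilon$-argument shows $R$ is a closed linear subspace of $L^2(\nu)$, and it is trivially closed under complex conjugation. By $A$-invariance of $E$,
\[
\|A^{n_j}(A^m f)-A^m f\|_E=\|A^m(A^{n_j}f-f)\|_E=\|A^{n_j}f-f\|_E\to 0,
\]
so each $\widetilde{A^m f}$ lies in $R$. The telescoping identity
\[
(g_1 g_2)\circ\sigma_A^{n_j}-g_1 g_2 = (g_1\circ\sigma_A^{n_j})(g_2\circ\sigma_A^{n_j}-g_2)+(g_1\circ\sigma_A^{n_j}-g_1)g_2,
\]
together with the $L^\infty$-bound on each $\widetilde{A^m f}$, propagates membership in $R$ through finite products; combined with conjugation-closedness this shows that $R$ contains every $*$-polynomial in $\{1\}\cup\{\widetilde{A^m f}:m\geq 0\}$.

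Finally I would close the argument with Stone--Weierstrass and $L^2$-density. This $*$-polynomial algebra is unital, conjugation-closed, and separates points of $X_f$ (two multiplicative states of $\mathcal{A}_f$ agreeing on all $A^m f$ and their conjugates must coincide), so Stone--Weierstrass makes it uniformly dense in $C(X_f)$, and therefore $L^2(\nu)$-dense in $L^2(X_f,\nu)$. Combined with closedness of $R$, this forces $R=L^2(X_f,\nu)$, which is rigidity. The only real subtlety is the bookkeeping of the identifications between $\mathcal{A}_f\subset l^\infty(\mathbb{N})$ and $C(X_f)$ via the Gelfand transform; once that is clean, the density-plus-closed-subspace argument is routine, with asymptotic periodicity supplying rigidity on the single generator $f$ and $A$-invariance of $E$ propagating it to the whole of $L^2(\nu)$.
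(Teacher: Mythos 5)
Your proposal is correct and follows essentially the same line as the paper's own proof: both pass from the single estimate $\lim_j \|A^{n_j}f - f\|_E = 0$ to all of $L^2(\nu)$ by exploiting the algebra structure of $\mathcal{A}_f\cong C(X_f)$ and then $L^2$-density. Where the paper simply asserts that the same convergence ``is not hard to check'' for every $g\in\mathcal{A}_f$ and then passes to general Borel sets via Lusin's theorem and dominated convergence, you make the first step explicit (telescoping plus $A$-invariance of $E$ to handle shifts, products, and conjugates) and replace the Lusin step with the cleaner observation that the set $R$ of rigid vectors is a closed subspace of $L^2(\nu)$ containing a dense set; the $\sigma_A$-invariance of $\nu$ is the key fact in both, used to control $\|g\circ\sigma_A^{n_j} - g_k\circ\sigma_A^{n_j}\|_{L^2(\nu)}$. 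One small remark: you do not actually need Stone--Weierstrass, since the $*$-polynomials in $\{1\}\cup\{A^m f:m\ge 0\}$ are norm-dense in $\mathcal{A}_f$ by definition of the anqie generated by $f$, and the Gelfand transform carries this directly to uniform density in $C(X_f)$.
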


\begin{proof}
By the definition of asymptotically periodic function, there is a sequence of positive integers $\{n_{j}\}_{j=1}^{\infty}$ with
$\lim_{j\rightarrow \infty}E(|A^{n_{j}}f-f|^2)=0$.
It is not hard to check that for any $g\in \mathcal{A}_{f}$, $\lim_{j\rightarrow \infty}E(|A^{n_{j}}g-g|^2)=0$.
Thus for any $g(x)\in C(X_{f})$, by equation (\ref{the representation}), we have
\begin{equation}\label{the integral expression}
\lim_{j\rightarrow \infty}\int_{X_{f}}|g\circ (\sigma_{A})^{n_{j}}(x)-g(x)|^2d\nu=0.
\end{equation}
By Lusin's Theorem, for any Borel set $F$ of $X_{f}$, there is a sequence $\{g_{n}\}_{n=1}^{\infty}$ of continuous functions on $X_{f}$ with $\|g_{n}\|\leq 1$ such that  $\lim_{n\rightarrow \infty}g_{n}(x)=\chi_{F}(x)$ for almost all $x$. Then by Lebesgue's Dominated Convergence Theorem, for any $\epsilon>0$,
there is a $g_{n_{0}}$ such that $\int_{X_{f}}|\chi_{F}(x)-g_{n_{0}}(x)|d\nu<{\epsilon}/{3}$. By equation (\ref{the integral expression}),
there is a sufficiently large $K$ such that $\int_{X_{f}}|g_{n_{0}}\circ (\sigma_{A})^{n_{j}}(x)-g_{n_{0}}(x)|^2d\nu<{\epsilon}/{3}$ when $j>K$.
Thus
\begin{eqnarray*}
  \nu((\sigma_{A})^{-n_{j}}F\triangle F)&=&\int_{X_{f}}|\chi_{F}\circ (\sigma_{A})^{n_{j}}(x)-\chi_{F}(x)|d\nu \\
&\leq&\int_{X_{f}}|\chi_{F}\circ (\sigma_{A})^{n_{j}}(x)-g_{n_{0}}\circ (\sigma_{A})^{n_{j}}(x)|d\nu \\
  && + \int_{X_{f}}|g_{n_{0}}\circ (\sigma_{A})^{n_{j}}(x)-g_{n_{0}}(x)|d\nu
+\int_{X_{f}}|g_{n_{0}}(x)-\chi_{F}(x)|d\nu<\epsilon.
\end{eqnarray*}
Hence $\lim_{j\rightarrow \infty}\nu((\sigma_{A})^{-n_{j}}F\triangle F)=0$.
\end{proof}
It is known that for a measure-preserving dynamical system $(X,\nu,T)$ with $T$ a homeomorphism, if $\nu$ has discrete spectrum, then $T$ is rigid (see e.g., \cite{KP7})). In the following, we show that this rigidity satisfies conditions (\ref{restriction1}), (\ref{asymptotical periodicity1}) in Theorem \ref{comparison result}.
\begin{proposition}\label{discre spectrum}
Let $X$ be a compact metric space and $T:X\rightarrow X$ be a homeomorphism. Let $\nu$ be a $T$-invariant probability measure on $X$. Suppose that $\nu$ has discrete spectrum. Then $(X,\nu,T)$ satisfies conditions (\ref{restriction1}), (\ref{asymptotical periodicity1}) in Theorem \ref{comparison result}. That is, for any $g(x) \in L^2(X,\nu)$, there are sequences $\{h_j\}_{j=1}^\infty$ and $\{n_j\}_{j=1}^\infty$ of positive integers with
\begin{equation}\label{5/19formula1}
\lim_{j\rightarrow \infty} \frac{\log\log h_j}{\log h_j} \frac{n_j}{\varphi(n_j)} = 0
\end{equation}
such that
\begin{equation}\label{5/19formula2}
\lim_{j\rightarrow \infty} \frac{1}{h_j} \sum_{l=1}^{h_j} \|g \circ T^{ln_j} - g\|_{L^2(\nu)}^2 = 0.
\end{equation}
\end{proposition}

\begin{proof}
Since $\nu$ has discrete spectrum, by definition there is a standard orthogonal basis $\{g_s(x)\}_{s=1}^\infty$ in $L^2(X,\nu)$ with $g_s(Tx) = e^{2\pi i \lambda_s } g_s(x)$ for some real number $\lambda_s$, where $s=1,2,\ldots.$ Let $g(x)\neq 0 \in L^2(X,\nu)$, write $g(x) = \sum_{s=1}^\infty a_s g_s(x)$. Then
$\|g\|_{L^2(\nu)}^2 = \sum_{s=1}^\infty |a_s|^2 < \infty.$
For $j=1,2,\ldots,$ choose $\epsilon_j = \|g\|^2_{L^2(\nu)}/j$ and $N_j\geq 1$ with
$
\sum_{s=N_j+1}^\infty |a_s|^2 < \frac{\epsilon_j}{8}.
$
Let $t_j = 2\|g\|^2_{L^2(\nu)} e^{N_j}/\epsilon_j$. Choose $n_j$ such that
$
|e^{2\pi i n_j \lambda_s} - 1| \leq 1/t_{j}
$
for $s=1,\ldots,N_j$, where $1\leq n_j \leq t_j^{N_j}$.  Let $h_{j}=t_{j}^{1/2}$.
By the choice of $n_j$ and $h_{j}$, as well as the estimate $\frac{n_{j}}{\varphi(n_{j})}\ll \log\log n_{j}$, it is not hard to check that they satisfy condition (\ref{5/19formula1}).
Then
\beas
&& \frac{1}{h_j} \sum_{l=1}^{h_j} \|g\circ T^{ln_j} - g\|^2_{L^2(\nu)}=\|\sum_{s=1}^\infty a_s g_s \circ T^{ln_j}(x) - \sum_{s=1}^\infty a_s g_s(x)\|^2_{L^2(\nu)} \\
&=& \frac{1}{h_j} \sum_{l=1}^{h_j} \sum_{s=1}^\infty |a_s|^2 | e^{2\pi il n_j \lambda_s } -1 |^2 \\
&=& \frac{1}{h_j} \sum_{l=1}^{h_j} \sum_{s=1}^{N_j} |a_s|^2 | e^{2\pi il n_j \lambda_s } -1 |^2 + \frac{1}{h_j} \sum_{l=1}^{h_j} \sum_{s=N_j+1}^{\infty} |a_s|^2 | e^{2\pi il n_j \lambda_s } -1 |^2 \\
&\leq&  \frac{1}{h_j} \sum_{l=1}^{h_j} N_j \|g\|^2_{L^2(\nu)} \frac{l^2}{t_j^2} + \frac{\epsilon_j}{2} \leq \frac{N_j\|g\|^2_{L^2(\nu)}}{t_j} + \frac{\epsilon_j}{2} < \epsilon_j \rightarrow 0, ~~~~~~as~j \rightarrow \infty.
\eeas
\end{proof}
\begin{proposition}\label{WAP is asymptotically periodic function}
Let $f\in l^{\infty}(\mathbb{Z})$ be a weakly almost periodic function (see Remark \ref{WAP} for definition). Then $f_{1}$ $($=$f$ restricted to $\mathbb{N}$$)$ belongs to the class of asymptotically periodic functions described by conditions (\ref{restriction3/1}) and (\ref{asymptotical periodicity3/1}).
That is, for any mean state $E$, there are sequences $\{h_{j}\}_{j=1}^{\infty}$ and $\{n_{j}\}_{j=1}^{\infty}$ of positive integers with
\begin{equation}\label{5/19formula5}
\lim_{j\rightarrow \infty}\frac{\log\log h_{j}}{\log h_{j}}\frac{n_{j}}{\varphi(n_{j})}=0
\end{equation}
such that
\begin{equation}\label{5/19formula6}
\lim_{j\rightarrow \infty}\frac{1}{h_{j}}\sum_{l=1}^{h_{j}}E(|f-A^{ln_{j}}f|^2)=0.
\end{equation}
\end{proposition}
\begin{proof}
Let $\mathcal{A}_{f}$ be the C*-algebra of $l^{\infty}(\mathbb{Z})$ generated by $1$ and $\{A^{n}f:n\geq 0\}$. We use $X_{f}$ to denote the maximal ideal space of $\mathcal{A}_{f}$ and $\sigma_{A}$ the homeomorphism on $X_{f}$ induced by $n\mapsto n+1$ on $\mathbb{Z}$. Let $E$ be a mean state on $l^{\infty}(\N)$ depending on $\omega$ (see Definition \ref{def of mean state}), where $\omega$ is in the weak* closure of the sequence $\{N_{m}\}_{m=1}^{\infty}$ of positive integers in $\mathbb{\beta}\mathbb{N}$. Then $E$ can be naturally treated as a state on $\mathcal{A}_{f}$ in the way that
\[g\mapsto E(g_{1})\] for any $g(n)\in \mathcal{A}_{f}$, where $g_{1}(n)$ is the restriction of $g(n)$ to $\mathbb{N}$. Define the state $\rho_{N_{m}}$ on $\mathcal{A}_{f}$ by $\rho_{N_{m}}(g)=\frac{1}{N_{m}}\sum_{n=0}^{N_{m}-1}g(n)$ for any $g\in\mathcal{A}_{f}$. So $E$ is in the weak* closure of $\{\rho_{N_{m}}\}_{m=1}^{\infty}$ in $(\mathcal{A}_{f}^{\sharp})_{1}$. Since $(\mathcal{A}_{f}^{\sharp})_{1}$ is compact and metrizable, there is a subsequence $\{N_{m_{s}}\}_{s=1}^{\infty}$ of positive integers satisfying for any $g\in \A_{f}$,
\begin{equation}\label{the state defined on the subsequence}
E(g)=\lim_{s\rightarrow \infty}\frac{1}{N_{m_{s}}}\sum_{n=0}^{N_{m_{s}}-1}g(n).
\end{equation}
By Theorem \ref{the measure induced by a state}, there is a $\sigma_{A}$-invariant probability measure $\nu$ on $X_{f}$ such that for any $g\in \A_{f}$,
\begin{equation}\label{generic measure2}
\lim_{s\rightarrow \infty}\frac{1}{N_{m_{s}}}\sum_{n=0}^{N_{m_{s}}-1}g(n)=\int_{X_{f}}\widetilde{g}(x)d\nu,
\end{equation}
where $\widetilde{g}(x)$ is the image of $g(n)$ under the Gelfand transform.
Thanks to \cite[Proposition 5.1]{HWY}, $\nu$ has discrete spectrum. By Proposition \ref{discre spectrum}, there are sequences $\{n_{j}\}_{j=1}^{\infty}$ and $\{h_{j}\}_{j=1}^{\infty}$ of positive integers satisfying condition (\ref{5/19formula5}) such that
\[\lim_{j\rightarrow \infty}\frac{1}{h_{j}}\sum_{l=1}^{h_{j}}\|\widetilde{f}((\sigma_{A})^{ln_{j}}x)-\widetilde{f}(x)\|_{L^{2}(\nu)}^2=0.\]
By equations (\ref{the state defined on the subsequence}) and (\ref{generic measure2}),
\[\lim_{j\rightarrow \infty}\frac{1}{h_{j}}\sum_{l=1}^{h_{j}}E(|f-A^{ln_{j}}f|^2)=\lim_{j\rightarrow \infty}\frac{1}{h_{j}}\sum_{l=1}^{h_{j}}\int_{X_{f}}|\widetilde{f}((\sigma_{A})^{ln_{j}}x)-\widetilde{f}(x)|^2d\nu=0.\]
\end{proof}
To summarize, through invariant states we establish a connection between arithmetics and measurable dynamics.
Specifically, for any given arithmetic function $f$ in $l^{\infty}(\mathbb{N})$, it corresponds to a measure-preserving dynamical system $(X_{f},\nu, \sigma_{A})$.
So we can apply tools in ergodic theory to study the system $(X_{f},\nu, \sigma_{A})$ and further study properties of $f$.
\section{Proofs of Theorem \ref{main estimate}}\label{estimate of second moment}
Theorem \ref{main estimate} comes from a general result on the average of bounded multiplicative functions in short arithmetic progressions (see Proposition \ref{main theorem2} below). In the statement of the next result, we shall use the following distance function of Granville and Soundararajan,
\[\mathbb{D}_{k}(f(n),g(n);x):=\Big(\sum_{\substack{p\leq x\\p \nmid k}}\frac{1-\text{Re}(f(p)\overline{g(p)})}{p}\Big)^{\frac{1}{2}}\]
for two multiplicative functions $f(n)$ and $g(n)$ with $|f(n)|,|g(n)|\leq 1$ for all $n\geq 1$. This distance function was used in \cite{BGS} to measure the pretentiousness between any multiplicative function $f(n)$ and some function for which exceptional modulus $k$ does exist. Throughout define
\begin{align*}
 M_{k}(f;x;T):&=\inf_{|t|\leq T}\mathbb{D}_{k}(f,n\mapsto n^{it};x)^2,\\
 M_{k}(f;k;x;T):&=\inf_{\chi(mod~k)}\inf_{|t|\leq T}\mathbb{D}_{k}(f\chi,n\mapsto n^{it};x)^2.
\end{align*}
\begin{proposition}\label{main theorem2}
Let $X$ be large enough with $1\leq k\leq (\log X)^{1/32}$. Let $3\leq h\leq X/k$. Let $f(n)$ be a multiplicative function with $|f(n)|\leq 1$ for all $n\geq 1$. Then
\begin{equation}\label{average on multiplicative function}
\sum_{\substack{a=1\\(a,k)=1}}^{k}\sum_{x=X}^{2X}\Bigg|\sum_{\substack{n=x\\n\equiv a(mod~k)}}^{x+hk}f(n)\Bigg|^2 \ll  h^2X\varphi(k)\Big(\frac{k}{\varphi(k)}\frac{\log \log h}{\log h}+\frac{1}{(\log X)^{1/300}}+\frac{M_{k}(f;k;X;2X)+1}{\exp(M_{k}(f;k;X;2X))}\Big).
\end{equation}
\end{proposition}
The major ingredient of our proof of the above result is Matom\"{a}ki-Radziwi{\l}{\l}'s estimate \cite{KM} on averages of multiplicative functions in short intervals and the large sieve. We defer the proof to Appendix C. Here we list some recent results on averages of multiplicative functions in short arithmetic progressions: the method used in \cite{KMT17} can give that the coefficient before $\frac{\log \log h}{\log h}$ is $k$ in formula (\ref{average on multiplicative function}); \cite[Theorem 3.1]{Kan} gave the result that when $f=\mu(n)$, then for any $\epsilon>0$, the left hand side of formula (\ref{average on multiplicative function})$\leq \epsilon h^2X\varphi(k)$, whenever $\sum_{p|k}1/p\leq (1-\epsilon)\sum_{p\leq h}1/p$;
For general multiplicative function, \cite[Theorem 1.6, Corollary 1.7]{KMT} gave the result that for any $\epsilon>0$, the left hand side of formula (\ref{average on multiplicative function})$\leq \epsilon h^2X\varphi(k)$ when $k$ is $h^{\epsilon^2}$-typical (i.e., there are not many prime factors of $k$ less than $h^{\epsilon^2}$).

The reason that we give the estimate in form of formula (\ref{average on multiplicative function}) is that our main interest is to concern about when $k$ is far larger than $h$,  whether the first term of the right hand side of formula (\ref{average on multiplicative function}) is still $h^2X\varphi(k)o_{h}(1)$. According to our result, this is true if $k$ is as large as $\exp(h^{o(1)})$ since $k/\varphi(k)\ll \log\log k$. We believe that the coefficient before $(\log \log h)/\log h(=o_{h}(1))$ in formula (\ref{average on multiplicative function}) can be as small as $O(1)$, an absolute constant independent of $k$. Actually, note that
\[\frac{1}{X} \sum_{n=1}^{X} \left|\sum_{l=1}^h \mu(n+lk)\right|^2 = \frac{1}{Xk} \sum_{a=1}^{k} \sum_{x=1}^{X} \Bigg|\sum_{\substack{n=x\\ n \equiv a(mod~k)} }^{x+hk} \mu(n)\Bigg|^2 + O(1).\]
It is likely to believe that
\begin{equation}\label{formula0410}
\limsup_{X \rightarrow \infty} \frac{1}{Xk} \sum_{a=1}^{k} \sum_{x=1}^{X} \Bigg|\sum_{\substack{n=x\\n\equiv a(mod~k)} }^{x+hk} \mu(n)\Bigg|^2= o(h^2),
\end{equation}
where the little ``o" term is independent of $k\geq 1$. This is implied by a positive answer to the Chowla conjecture. For a general non-pretentious multiplicative function $f(n)$ with $|f(n)|\leq 1$ for any $n\in \mathbb{N}$, equation (\ref{formula0410}) in which $\mu(n)$ is replaced by $f(n)$  would be implied by a positive answer to Elliott's conjecture (see \cite[Conjecture II]{Elliot}, \cite{KMT17}).

As an application of Proposition \ref{main theorem2}, we shall prove certain self correlations of the M\"{o}bius function which is stated in Theorem \ref{main estimate}. In this proof, we need the following known result about the non-pretentious nature of $\mu(n)1_{(n,k)=1}$ (see e.g., \cite[Lemma C.1]{KMT17}).
\begin{lemma}\label{the pretentious distance of the moebius function}
Let $X$ be large enough with $k\leq \log X$. Let $f(n)=\mu(n)1_{(n,k)=1}$. Then
\[
\inf_{1\leq d\leq k}M_{k}(f;d;X;2X) \geq (1/3-\epsilon) \log\log X+O(1),
\]
where $\epsilon>0$ is sufficiently small.
\end{lemma}
\begin{proof}[Proof of Theorem \ref{main estimate}]
Given $k\geq 1$ and $h\geq 2$. For $X$ large enough with $\log X>h^2k$,
\begin{align}
  &\sum_{n=X}^{2X}|\sum_{l=1}^{h}\mu(n+kl)|^2=\sum_{a=1}^{k}\sum_{\substack{n=X\\n\equiv a(mod~k)}}^{2X}|\sum_{l=1}^{h}\mu(n+kl)|^2 \nonumber \\
  &=\sum_{a=1}^{k}
  \sum_{m=X/k}^{2X/k}|\sum_{l=1}^{h}\mu(km+kl+a)|^2+O(h^2k)
  =\sum_{a=1}^{k}
  \sum_{m=X/k}^{2X/k}|\sum_{\substack{n=(m+1)k\\n\equiv a(mod~k)}}^{(m+h+1)k}\mu(n)|^2+O(h^2k) \nonumber \\
&=\sum_{a=1}^{k}
  \sum_{\substack{x=X\\k|x}}^{2X}|\sum_{\substack{n=x\\n\equiv a(mod~k)}}^{x+hk}\mu(n)|^2+O(h^2k)
=\frac{1}{k}\sum_{a=1}^{k}
  \sum_{x=X}^{2X}|\sum_{\substack{n=x\\n\equiv a(mod~k)}}^{x+hk}\mu(n)|^2+O(X)\\
&=\frac{1}{k}\sum_{d|k}\sum_{\substack{a=1\\(a,k/d)=1}}^{k/d}\sum_{x=X}^{2X}|\sum_{\substack{n=x/d\\n\equiv a(mod~k/d)}}^{x/d+hk/d}\mu(dn)|^2+O(X)\nonumber \\
&=\frac{1}{k}\sum_{d|k}d\sum_{\substack{a=1\\(a,k/d)=1}}^{k/d}\sum_{x=X/d}^{2X/d}\Big|\sum_{\substack{n=x\\n\equiv a(mod~k/d)}}^{x+hk/d}\mu(n)1_{(n,k)=1}(n)\Big|^2+O(X\frac{1}{k}\sum_{d|k}\varphi(k/d))+O(X)\nonumber \\
&=\frac{1}{k}\sum_{d|k}d\sum_{\substack{a=1\\(a,k/d)=1}}^{k/d}\sum_{x=X/d}^{2X/d}\Big|\sum_{\substack{n=x\\n\equiv a(mod~k/d)}}^{x+hk/d}\mu(n)1_{(n,k)=1}(n)\Big|^2+O(X). \nonumber
\end{align}
Summarize the above, we have
\begin{equation}\label{eqi}
 \sum_{n=X}^{2X}|\sum_{l=1}^{h}\mu(n+kl)|^2=\frac{1}{k}\sum_{d|k}d\sum_{\substack{a=1\\(a,k/d)=1}}^{k/d}\sum_{x=X/d}^{2X/d}\Big|\sum_{\substack{n=x\\n\equiv a(mod~k/d)}}^{x+hk/d}\mu(n)1_{(n,k)=1}(n)\Big|^2+O(X).
\end{equation}
Then for $X$ large enough, by Proposition \ref{main theorem2} and Lemma \ref{the pretentious distance of the moebius function},
\begin{align*}
  &\sum_{n=X}^{2X}|\sum_{l=1}^{h}\mu(n+kl)|^2\ll \frac{1}{k}(\sum_{d|k}d) h^2X/d\varphi(k/d)\Big(\frac{k/d}{\varphi(k/d)}\frac{\log\log h}{\log h}+\frac{1}{(\log X)^{1/400}}\Big)\\
  &=h^2X(\sum_{d|k}\frac{1}{d})\frac{\log\log h}{\log h}+\frac{h^2X}{(\log X)^{1/400}}\leq h^2X \prod_{p|k}(1-1/p)^{-1} \frac{\log\log h}{\log h}+\frac{h^2X}{(\log X)^{1/400}}\\
  &\leq h^2X \frac{k}{\varphi(k)}\frac{\log\log h}{\log h}+\frac{h^2X}{(\log X)^{1/400}}.
\end{align*}
Hence
\begin{equation}\label{main estimate 2}
\limsup_{N\rightarrow \infty}\frac{1}{N}\sum_{n=1}^{N}|\sum_{l=1}^{h}\mu(n+kl)|^2\ll h^2\frac{k}{\varphi(k)}\frac{\log\log h}{\log h},
\end{equation}
as claimed.
\end{proof}
\section{Proofs of Theorems \ref{disjointfromasymptoticallyperiodicfunctions3/1}, \ref{main theorem 2}, and Proposition \ref{investigation of Problem 1}}\label{proof of two theorems}
As an application of Theorem \ref{main estimate}, at the beginning of this section, we prove
that the M\"{o}bius function is disjoint from certain asymptotically periodic functions (i.e., Theorem \ref{disjointfromasymptoticallyperiodicfunctions3/1}).
\begin{proof}[Proof of Theorem \ref{disjointfromasymptoticallyperiodicfunctions3/1}]
Assume on the contrary, there is an $f\in l^{\infty}(\mathbb{N})$ with conditions (\ref{restriction3/1}) and (\ref{asymptotical periodicity3/1}) such that
$\lim_{N\rightarrow \infty}\frac{1}{N}\sum_{n=1}^{N}\mu(n)f(n)\neq 0$,
then there is a constant $c_{0}>0$ and a mean state $E$ such that
\begin{equation}\label{3/11formula1}
|\<f,\mu\>_{E}|\geq c_{0}
\end{equation}
By conditions (\ref{restriction3/1}) and (\ref{asymptotical periodicity3/1}), there are correspondingly sequences $\{h_{j}\}_{j=0}^{\infty}$ and $\{n_{j}\}_{j=0}^{\infty}$ of positive integers with
\begin{equation}\label{restriction3/1formula2}
\lim_{j\rightarrow \infty}\frac{\log\log h_{j}}{\log h_{j}}\frac{n_{j}}{\varphi(n_{j})}=0
\end{equation}
and
\begin{equation}\label{asymptotical periodicity3/1formula3}
\lim_{j\rightarrow \infty}\frac{1}{h_{j}}\sum_{l=1}^{h_{j}}E(|f-A^{ln_{j}}f|^2)=0.
\end{equation}
Let $\delta=\frac{c_{0}}{2(\|f\|_{l^{\infty}}+1)}$. By Theorem \ref{main estimate}, formulas (\ref{restriction3/1formula2}) and (\ref{asymptotical periodicity3/1formula3}), there is a $k_{0}$ such that
\[\frac{1}{h_{k_{0}}}\sum_{l=1}^{h_{k_{0}}}\|A^{ln_{k_{0}}}f-f\|_{E}^2<\delta^2\] and
\[\|\frac{1}{h_{k_{0}}}\sum_{l=1}^{h_{k_{0}}}A^{ln_{k_{0}}}\mu\|_{E}^2<\delta^2.\]
For any $l\in \mathbb{N}$, $\<f,\mu\>_{E}=\<A^{ln_{k_{0}}}f,A^{ln_{k_{0}}}\mu\>_{E}=\<A^{ln_{k_{0}}}f-f,A^{ln_{k_{0}}}\mu\>_{E}
+\<f,A^{ln_{k_{0}}}\mu\>_{E}$. Then
\begin{align*}
 |\<f,\mu\>_{E}|=&\Big|\frac{1}{h_{k_{0}}}\sum_{l=1}^{h_{k_{0}}}\<A^{ln_{k_{0}}}f-f,A^{ln_{k_{0}}}\mu\>_{E}+\<f,\frac{1}{h_{k_{0}}}
 \sum_{l=1}^{h_{k_{0}}}A^{ln_{k_{0}}}\mu\>_{E}\Big| \\
  \leq &\frac{1}{h_{k_{0}}}\sum_{l=1}^{h_{k_{0}}}\|A^{ln_{k_{0}}}f-f\|_{E}\cdot\|\mu\|_{E}+
  \|\frac{1}{h_{k_{0}}}\sum_{l=1}^{h_{k_{0}}}A^{ln_{k_{0}}}\mu\|_{E}\cdot\|f\|_{E}\\
 \leq& \Big(\frac{1}{h_{k_{0}}}\sum_{l=1}^{h_{k_{0}}}\|A^{ln_{k_{0}}}f-f\|_{E}^2\Big)^{\frac{1}{2}}+
 \|\frac{1}{h_{k_{0}}}\sum_{l=1}^{h_{k_{0}}}A^{ln_{k_{0}}}\mu\|_{E}\cdot\|f\|_{E}\\
 \leq &\delta(\|f\|_{l^{\infty}}+1)=c_{0}/2.
\end{align*}
Here we applied the Cauchy-Schwarz inequality to the first and second inequalities in the above, and the fact that $\|\mu\|_{E}\leq \|\mu\|_{l^{\infty}}=1$ and $\|f\|_{E}\leq \|f\|_{l^{\infty}}$. This contradicts formula (\ref{3/11formula1}). Hence the claim in this theorem holds.
\end{proof}

Now we prove Proposition \ref{investigation of Problem 1}.
\begin{proof}[Proof of Proposition \ref{investigation of Problem 1}]
Assume on the contrary that Problem 1 does not hold, that is, there is an asymptotically periodic function $f(n)$ such that
$\lim_{N\rightarrow \infty}\frac{1}{N}\sum_{n=1}^{N}\mu(n)f(n)\neq 0$. Then there is a $c_{0}>0$, a mean state $E$ and a sequence $\{n_{j}\}_{j=1}^{\infty}$ of positive numbers such that
\begin{equation}\label{5five}
|\<\mu,f\>_{E}|\geq c_{0}
\end{equation}
and
\begin{equation}\label{7seven}
\lim_{j\rightarrow\infty}\|A^{n_{j}}f-f\|_{E}^2=0.
\end{equation}
Let $\delta=\frac{c_{0}}{2(\|f\|_{l^{\infty}}+1)}$. By formula (\ref{condition independent of k}), choose a sufficiently large $l_{0}$ with
\begin{equation}
\|\frac{1}{l_{0}}\sum_{l=1}^{l_{0}}A^{lk}\mu\|_{E}<\delta,
\end{equation}
for any $k\geq 1$.
By equation (\ref{7seven}), there is an $n_{0}$ such that
\[\|A^{n_{0}}f-f\|_{E}<\frac{2\delta}{l_{0}+1}.\]
Then by the triangle inequality,
\begin{equation}
\frac{1}{l_{0}}\sum_{l=1}^{l_{0}}\|f-A^{ln_{0}}f\|_{E}\leq \frac{1}{l_{0}}\sum_{l=1}^{l_{0}}\sum_{j=1}^{l}\|A^{(j-1)n_{0}}f-A^{jn_{0}}f\|_{E}=\frac{1}{l_{0}}\sum_{l=1}^{l_{0}}\sum_{j=1}^{l}
\|f-A^{n_{0}}f\|_{E}<\delta.
\end{equation}
By the $A$-invariance of $E$ and the Cauchy-Schwarz inequality,
\begin{align*}
  \<f,\mu\>_{E}=&\frac{1}{l_{0}}\sum_{l=1}^{l_{0}}\<A^{ln_{0}}f-f,A^{ln_{0}}\mu\>_{E}
  +\<f,\frac{1}{l_{0}}\sum_{l=1}^{l_{0}}A^{ln_{0}}\mu\>_{E}\\
  & \leq \frac{1}{l_{0}}\sum_{l=1}^{l_{0}}\|f-A^{ln_{0}}f\|_{E}\cdot\|\mu\|_{l^{\infty}}+
  \|\frac{1}{l_{0}}\sum_{l=1}^{l_{0}}A^{ln_{0}}\mu\|_{E}\cdot\|f\|_{l^{\infty}}\\
  &<\delta(1+\|f\|_{l^{\infty}})=c_{0}/2.
\end{align*}
This contradicts formula (\ref{5five}). Hence formula (\ref{condition independent of k}) implies Problem 1.
\end{proof}
In the rest of this section, we shall prove Theorem \ref{main theorem 2}, which states that if SMDC holds, then $\mu$  is disjoint from all asymptotically periodic functions. Before proving it, we need some preparations. We first provide a property of asymptotically periodic functions.
\begin{proposition}\label{asymptotically periodic has zeo metric entropy}
Let $f$ be an asymptotically periodic function and $\rho$ an invariant state on $\mathcal{A}_{f}$. Then for the measure-preserving dynamical
system $(X_{f},\nu, \sigma_{A})$ with $\nu$ the probability measure induced by $\rho$ on $X_{f}$, the measure-theoretic entropy of $\sigma_{A}$ is zero.
\end{proposition}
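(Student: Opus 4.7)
The plan is to establish the conclusion via a two-step chain: first, to argue that $(X_{f}, \nu, \sigma_{A})$ is rigid in the sense preceding Theorem \ref{A is rigid}; second, to invoke the standard structural fact that any rigid measure-preserving system has vanishing Kolmogorov--Sinai entropy. The second step is well-known, so the real work is arranging the hypotheses so that (the argument of) Theorem \ref{A is rigid} is applicable to the given invariant state $\rho$.

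For the rigidity step I would first realize $\rho$ as the restriction to $\mathcal{A}_{f}$ of a mean state on $l^{\infty}(\mathbb{N})$. Since $\mathcal{A}_{f}$ is countably generated, Proposition \ref{metrizable} ensures that $(\mathcal{A}_{f}^{\sharp})_{1}$ is weak*-metrizable and compact. Viewing the empirical functionals $\rho_{N}(g) = \tfrac{1}{N}\sum_{n=0}^{N-1} g(n)$ as elements of $(\mathcal{A}_{f}^{\sharp})_{1}$ and extracting a subsequence $\{N_{m}\}$ along which they weak*-converge to $\rho$, one can pick any free ultrafilter $\omega \in \beta\mathbb{N}\setminus\mathbb{N}$ in the weak* closure of $\{N_{m}\}$; the associated mean state $E_{\omega}$ on $l^{\infty}(\mathbb{N})$ then satisfies $E_{\omega}|_{\mathcal{A}_{f}} = \rho$. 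Applying the asymptotic periodicity of $f$ to $E_{\omega}$ supplies a sequence $\{n_{j}\}$ with $\|A^{n_{j}} f - f\|_{E_{\omega}} \to 0$, and by $A$-invariance this rigidity passes to every shift $A^{k} f$, and hence (by the algebra operations together with density) to every $g \in \mathcal{A}_{f}$. Transferring through the Gelfand isomorphism $\mathcal{A}_{f} \cong C(X_{f})$ and the integral representation in Theorem \ref{the measure induced by a state}, one obtains $\|g \circ \sigma_{A}^{n_{j}} - g\|_{L^{2}(\nu)} \to 0$ for every $g \in C(X_{f})$, and a Lusin-type approximation, exactly as in the proof of Theorem \ref{A is rigid}, extends this to arbitrary $g \in L^{2}(\nu)$, establishing rigidity of $(X_{f}, \nu, \sigma_{A})$.

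The entropy step is the classical contradiction argument. Assume $h_{\nu}(\sigma_{A}) > 0$; then by Sinai's theorem the system admits a non-trivial Bernoulli factor $(Y, \mu, S)$. Bernoulli systems are strongly mixing, so $\mu(B \cap S^{-n} B) \to \mu(B)^{2}$ along the full sequence for every measurable $B$. On the other hand, rigidity descends along factor maps, so $(Y, \mu, S)$ is rigid along $\{n_{j}\}$; applied to $g = \chi_{B}$ this gives $\mu(B \cap S^{-n_{j}} B) \to \mu(B)$. Comparing the two limits forces $\mu(B) \in \{0,1\}$ for every $B$, contradicting the non-triviality of the factor. Hence $h_{\nu}(\sigma_{A}) = 0$.

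The main obstacle is the first reduction: one must verify that the invariant state $\rho$ of interest is genuinely a weak* accumulation point of the empirical states $\rho_{N}$, so that the lift to a mean state $E_{\omega}$ is available and the asymptotic periodicity hypothesis applies. The countable generation of $\mathcal{A}_{f}$ together with the density of $\iota(\mathbb{N})$ in $X_{f}$ should yield this reduction in the framework of the paper, where invariant states on anqies are consistently produced in Section \ref{mean states} precisely as such weak* limits of empirical averages.
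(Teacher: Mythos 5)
Your overall strategy matches the paper's one-line proof exactly: establish rigidity of $(X_{f},\nu,\sigma_{A})$ via (the argument of) Theorem \ref{A is rigid}, then invoke the standard fact that rigid measure-preserving systems have zero Kolmogorov--Sinai entropy. The paper cites \cite[Example 5.3.3]{KP7} for the second step; your independent argument via Sinai's weak isomorphism theorem, strong mixing of Bernoulli factors, and the fact that rigidity descends to factors is a valid proof of the same fact, if somewhat heavier machinery than needed.

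The worry you flag at the end, however, is not merely ``the main obstacle'' to be tidied up --- it is fatal for the proposition as literally stated, and your proposed reduction cannot be made to work for an arbitrary invariant state. The asymptotic periodicity of $f$ supplies rigidity sequences only with respect to \emph{mean} states $E$, and via your ultrafilter argument these correspond exactly to those invariant states $\rho$ on $\mathcal{A}_{f}$ that arise as weak* accumulation points of the empirical states $\rho_{N}$. A general $A$-invariant state need not lie in that closure. Indeed the unrestricted statement is false: $\mu^{2}$ is asymptotically periodic (Theorem \ref{the product of the translations of the Moebius function}), yet --- as the paper itself observes in the paragraph immediately following this proposition --- Peckner's measure of maximal entropy on $X_{\mu^{2}}$ has measure-theoretic entropy $\tfrac{6}{\pi^{2}}\log 2>0$, and via Theorem \ref{the measure induced by a state} it corresponds to an $A$-invariant state on $\mathcal{A}_{\mu^{2}}$. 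The paper's own subsequent wording (``any measure induced by a mean state'') confirms that the intended hypothesis is that $\rho$ be the restriction of a mean state $E$ on $l^{\infty}(\mathbb{N})$, equivalently a weak* limit of a subsequence $\rho_{N_{m}}$. Under that reading your reduction is correct --- given $\rho_{N_{m}}\to\rho$, any free ultrafilter $\omega$ in the closure of $\{N_{m}\}$ in $\beta\mathbb{N}$ yields $E_{\omega}|_{\mathcal{A}_{f}}=\rho$ --- and your proof goes through. But you should adopt this as part of the hypothesis rather than hope that countable generation and density of $\iota(\mathbb{N})$ imply it for an arbitrary invariant state; they do not.
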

The above proposition follows immediately from Theorem \ref{A is rigid} and \cite[Example 5.3.3]{KP7}.
The basic connection between topological entropy (denoted by $h(T)$) and measure-theoretic entropy (denoted by $h_{\nu}(T)$) is the variational principle (see, e.g., \cite[Theorem 8.6]{pw}).
It states that for any topological dynamical system $(X,T)$,
$h(T)=\sup\{h_\nu(T):\nu$ is a $T$-invariant Borel probability measure on $X\}$. By this principle, it is  easy to see that if $h(T)=0$, then $h_\nu(T)=0$ for any $T$-invariant probability measure $\nu$.

Here is an interesting example about topological entropy and measure-theoretic entropy. By Theorem \ref{the product of the translations of the Moebius function}, $\mu^2$ is an asymptotically periodic function. So by Proposition \ref{asymptotically periodic has zeo metric entropy}, for any measure induced by a mean state $\rho$ on $X_{\mu^2}$, the measure-theoretic entropy of $\sigma_{A}$ is zero. While Peckner proved in \cite{RP} that there is a $\sigma_{A}$-invariant measure on $X_{\mu^2}$ such that the measure-theoretic entropy of $\sigma_{A}$ is equal to $\frac{6}{\pi^2}\log 2$, which equals the topological entropy of $\sigma_{A}$. So the measure-theoretic entropy varies with respect to different measures.

The following lemma is a consequence of Proposition \ref{asymptotically periodic has zeo metric entropy} and \cite[Lemmas 4.28, 4.29]{EJMT5}, which are used to prove the equivalence between SMDC and the M\"{o}bius disjointness of completely deterministic sequences.
\begin{lemma}\label{zero entropy function approach}
Let $f$ be an asymptotically periodic function and $\mathcal{A}_{f}$ be the anqie generated by $f$. Suppose $\{N_{m}\}_{m=1}^{\infty}$ is a strictly increasing sequence of positive integers
such that $N_{m}|N_{m+1}$. Further suppose the sequence $\{N_{m}\}_{m=1}^{\infty}$ satisfies the condition that
there is an $A$-invariant state $\rho$ on $\A_{f}$, such that for any $h\in \A_{f}$, $\rho(h)=\lim_{m\rightarrow \infty}\frac{1}{N_{m}}\sum_{n=1}^{N_{m}}h(n)$.
Then for any $\epsilon>0$, there is an arithmetic function $g$ with finite range, and a subsequence $\{N_{m(l)}\}_{l=1}^{\infty}$ such that

(\romannumeral1) for $(X_{g},\sigma_{A})$ the anqie generated by $g$, the topological entropy of $\sigma_{A}$ is zero.

(\romannumeral2) $\frac{1}{N_{m(l)}}\sum_{n=1}^{N_{m(l)}}|f(n)-g(n)|<\epsilon.$
\end{lemma}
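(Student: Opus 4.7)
The plan is to use Proposition \ref{asymptotically periodic has zeo metric entropy} to produce a zero-entropy measure-preserving system for which $\iota(0)$ is quasi-generic along $\{N_m\}$, then discretize $f$ to a finite-valued symbolic sequence, and finally invoke the Weiss-type approximation \cite[Lemmas 4.28 and 4.29]{EJMT5} to find a zero topological entropy sequence $g$ close to this discretization along a subsequence of $\{N_m\}$.

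First, applying Proposition \ref{asymptotically periodic has zeo metric entropy} to $\rho$ yields a $\sigma_A$-invariant probability measure $\nu$ on $X_f$ with $h_\nu(\sigma_A)=0$; the hypothesis on $\{N_m\}$ together with equation (\ref{generic measure}) makes $\iota(0)$ quasi-generic for $\nu$ along $\{N_m\}$. Next, partition the compact set $\overline{f(\mathbb{N})}$ into finitely many Borel pieces $B_1,\dots,B_r$ of diameter less than $\epsilon/2$, pick representatives $z_i\in B_i$, and define $\tilde f(n):=z_i$ whenever $f(n)\in B_i$. Then $|f-\tilde f|<\epsilon/2$ pointwise, and $(X_{\tilde f},\sigma_A)$ is a topological factor of $(X_f,\sigma_A)$, so the push-forward measure $\tilde\nu$ still has zero measure-theoretic entropy and $\iota(0)$ remains quasi-generic for $\tilde\nu$ along $\{N_m\}$ in $X_{\tilde f}$. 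Since the finite-valued sequence $\tilde f$ is therefore completely deterministic along $\{N_m\}$, \cite[Lemmas 4.28 and 4.29]{EJMT5} supply a subsequence $\{N_{m(l)}\}$ (which automatically inherits the divisibility $N_{m(l)}\mid N_{m(l+1)}$ from the ambient sequence) and a finite-valued arithmetic function $g$ whose orbit closure $(X_g,\sigma_A)$ has zero topological entropy and satisfies $\frac{1}{N_{m(l)}}\sum_{n=1}^{N_{m(l)}}|\tilde f(n)-g(n)|<\epsilon/2$. The triangle inequality then yields (ii), while (i) is built into the construction of $g$.

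The principal obstacle is the passage from measure-theoretic to topological entropy: zero entropy of $(X_f,\nu,\sigma_A)$ does not preclude other $\sigma_A$-invariant measures on $X_{\tilde f}$ from having positive entropy, so $(X_{\tilde f},\sigma_A)$ itself may well have positive topological entropy (as the example of $f=\mu^2$ in Proposition \ref{non zero entropy example} illustrates). This is precisely what the Kamae--Weiss machinery behind \cite[Lemmas 4.28--4.29]{EJMT5} resolves---it replaces $\tilde f$ by a nearby $g$ whose orbit closure has zero topological entropy, at the cost of an $\epsilon/2$ error and the passage to a subsequence. A minor bookkeeping point is that those lemmas measure proximity via Hamming distance on the finite alphabet $\{z_1,\dots,z_r\}$, whereas (ii) requires an $L^1$ bound on complex-valued sequences; the two differ by at most $\max_{i,j}|z_i-z_j|\leq 2\|f\|_\infty$, which we absorb into the choice of Hamming threshold.
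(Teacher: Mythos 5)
Your strategy matches the paper's: combine Proposition \ref{asymptotically periodic has zeo metric entropy} with the Kamae--Weiss machinery of \cite[Lemmas 4.28, 4.29]{EJMT5}; the paper presents the lemma as an immediate consequence of these two ingredients and leaves the details to the reader, which you supply. The one genuine gap is the claim that $(X_{\tilde f},\sigma_A)$ is a topological factor of $(X_f,\sigma_A)$. For an arbitrary Borel partition of $\overline{f(\mathbb{N})}$ the coordinate-wise discretization $\phi$ is Borel but not continuous (for instance no nontrivial clopen partition exists when $\overline{f(\mathbb{N})}$ is connected), so there is no topological factor map $X_f \to X_{\tilde f}$. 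Consequently the empirical measures of $\tilde f$ along $\{N_m\}$ need not converge to $\phi_*\nu$, and neither of the facts you derive from the factor claim --- zero entropy of the limit measure and quasi-genericity of $\iota(0)$ in $X_{\tilde f}$ along $\{N_m\}$ --- follows as stated.

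The repair is standard: choose the partition so that each boundary $\partial B_i$ has measure zero for the marginal of $\nu$ under evaluation at coordinate $0$; such a partition of any prescribed mesh always exists for a finite Borel measure on a compact metric space, since for each center only countably many spheres can carry positive mass. Then $\phi$ is $\nu$-almost-everywhere continuous, the mapping theorem gives $\frac{1}{N_m}\sum_{n<N_m}\delta_{B^n\tilde f}\to\phi_*\nu$, and since measurable factor maps do not increase measure-theoretic entropy, $h_{\phi_*\nu}(B)\le h_\nu(\sigma_A)=0$. This restores exactly the hypotheses needed to invoke \cite[Lemmas 4.28, 4.29]{EJMT5}; with that fix the remainder of your argument --- including the Hamming-versus-$L^1$ bookkeeping and the observation that divisibility of $\{N_m\}$ passes to any subsequence --- is correct.
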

Based on such connections between asymptotically periodic functions and arithmetic functions with associated anqies having zero entropy, we are ready to prove Theorem \ref{main theorem 2}.
\begin{proof}[Proof of Theorem \ref{main theorem 2}]
Assume on the contrary that there is some asymptotically periodic function $f$ such that
$\lim_{N\rightarrow \infty}\frac{1}{N}\sum_{n=1}^{N}\mu(n)f(n)\neq 0$,
then there is a constant $c_{0}>0$ and an increasing sequence $\{N_{m}\}_{m=1}^{\infty}$ of positive integers with $N_{m}|N_{m+1}$ such that
\begin{equation}\label{one}
\frac{1}{N_{m}}\Bigg|\sum_{n=1}^{N_{m}}\mu(n)f(n)\Bigg|\geq c_{0}.
\end{equation}
For each $N_{m}$, define a state $\rho_{N_{m}}$ on $\mathcal{A}_{f}$ by $\rho_{N_{m}}(h)=\frac{1}{N_{m}}\sum_{n=1}^{N_{m}}h(n)$
for any $h\in \mathcal{A}_{f}$. It follows from Proposition \ref{metrizable} that there is a subsequence $\{\rho_{N_{m(l)}}\}_{l=1}^{\infty}$ and
a state $\rho$ on $\mathcal{A}_{f}$, such that $\rho(h)=\lim_{l\rightarrow \infty}\frac{1}{N_{m(l)}}\sum_{n=1}^{N_{m(l)}}h(n)$  for any $h\in \mathcal{A}_{f}$. Then $\rho$ is $A$-invariant.
By Lemma \ref{zero entropy function approach}, there is a $g(n)$ with the topological entropy of $(X_{g},\sigma_{A})$ zero,
and a subsequence of $\{N_{m(l)}\}_{l=1}^{\infty}$ (denoted by $\{N_{m(l)}\}_{l=1}^{\infty}$ again), such that
\begin{equation}\label{above two}
\frac{1}{N_{m(l)}}\sum_{n=1}^{N_{m(l)}}|f(n)-g(n)|<\frac{c_{0}}{2}.
\end{equation}
Applying Sarnak's M\"{o}bius Disjointness Conjecture to $(X_{g},\sigma_{A})$,
\begin{equation}\label{above three}
\lim_{l\rightarrow \infty}\frac{1}{N_{m(l)}}\sum_{n=1}^{N_{m(l)}}\mu(n)\widetilde{g}(A^{n}(\iota(0)))=\lim_{l\rightarrow \infty}\frac{1}{N_{m(l)}}\sum_{n=1}^{N_{m(l)}}\mu(n)g(n)=0,
\end{equation}
where $\widetilde{g}(x)$ is the image of $g(n)$ in $C(X_{g})$ under the Gelfand transform (see equation (\ref{Gelfand transform})).
By equations (\ref{above two}) and (\ref{above three}), we obtain a result which contradicts formula $(\ref{one})$.
Then $\mu$ is disjoint from all asymptotically periodic functions.
\end{proof}
\section{Disjointness of M\"{o}bius from rigid dynamical systems}\label{applications}
In this section, we shall prove Theorem \ref{comparison result}, Corollary \ref{comparison example} and Proposition \ref{investigation of Problem 2}.
\begin{proof}[Proof of Theorem \ref{comparison result}]
Assume on the contrary, there is an $f\in C(X)$ such that
\[\lim_{N\rightarrow \infty}\frac{1}{N}\sum_{n=1}^{N}\mu(n)f(T^{n}x_{0})\neq 0,\]
then there is a constant $c_{0}>0$ and an increasing sequence $\{N_{m}\}_{m=1}^{\infty}$ of positive integers such that
\begin{equation}\label{two}
\frac{1}{N_{m}}\Bigg|\sum_{n=1}^{N_{m}}\mu(n)f(T^{n}x_{0})\Bigg|\geq 2c_{0}.
\end{equation}
Since $X$ is a compact metric space, $C(X)$ is countably generated as an abelian C*-algebra. By Proposition \ref{metrizable}, there is a subsequence of $\{N_{m}\}_{m=1}^{\infty}$ (denoted by $\{N_{m}\}_{m=1}^{\infty}$ again for convenience) and a $T$-invariant measure $\nu$ on $X$, such that
$\nu_{N_{m}}=\frac{1}{N_{m}}\sum_{n=0}^{N_{m}-1}\delta_{T^{n}x_{0}}$ weak* converges to $\nu$ as $m\rightarrow \infty$, i.e., for any $f\in C(X)$,
\[\lim_{m\rightarrow \infty}\int_{X}f(x)d\nu_{N_{m}}=\lim_{m\rightarrow \infty}\frac{1}{N_{m}}\sum_{n=0}^{N_{m}-1}f(T^{n}x_{0})=\int_{X}f(x)d\nu.\]
By formula (\ref{two}) and the condition stated in this theorem, there is a $g\in C(X)$ and sequences $\{h_{j}\}_{j=1}^{\infty}$ and $\{n_{j}\}_{j=1}^{\infty}$ of positive integers with $\lim_{j\rightarrow \infty}\frac{\log\log h_{j}}{\log h_{j}}\frac{n_{j}}{\varphi(n_{j})}=0$, such that
\begin{equation}\label{three}
\lim_{j\rightarrow\infty}\frac{1}{h_{j}}\sum_{l=0}^{h_{j}-1}\|g\circ T^{ln_{j}}-g\|_{L^2(\nu)}^2=0,
\end{equation}
and
\begin{equation}\label{four}
\frac{1}{N_{m}}\Bigg|\sum_{n=1}^{N_{m}}\mu(n)g(T^{n}x_{0})\Bigg|\geq c_{0}.
\end{equation}
Choose a free ultrafilter $\omega$ in the closure of $\{N_{m}: m=1,2,3,\ldots\}$ in $\beta\mathbb{N}$. Then the mean state $E$ on $l^{\infty}(\mathbb{N})$ defined by $E(h)=\lim_{N_{m}\rightarrow \omega}\frac{1}{N_{m}}\sum_{n=0}^{N_{m}-1}h(n)$ for any $h\in l^\infty(\N)$ is $A$-invariant. Recall the GNS construction in Section 4, we use $\<\ ,\
\>_{E}$ and $\|\cdot\|_{E}$ to denote the inner product and norm induced by $E$ on $\mathcal{H}_{E}$, respectively (see equations (\ref{inner product}) and (\ref{inner norm})). Let $\widetilde{g}(n)=g(T^{n}x_{0})$. Then by equation (\ref{four}), we have
\begin{equation}\label{five}
|\<\widetilde{g},\mu\>_{E}|\geq c_{0}.
\end{equation}
For any $l=1,2,\ldots$,
note that
\[\|g\circ T^{ln_{j}}-g\|_{L^2(\nu)}^2=\lim_{m\rightarrow \infty}\frac{1}{N_{m}}\sum_{n=0}^{N_{m}-1}|g(T^{ln_{j}+n}x_{0})-g(T^{n}x_{0})|^2.\]
So by equation (\ref{three}),
\begin{equation}\label{seven}
\lim_{j\rightarrow\infty}\frac{1}{h_{j}}\sum_{l=0}^{h_{j}-1}\|A^{ln_{j}}\widetilde{g}-\widetilde{g}\|_{E}^2=0.
\end{equation}
By an argument similar to the proof in Theorem \ref{disjointfromasymptoticallyperiodicfunctions3/1}, we have $|\<\widetilde{g},\mu\>_{E}|\leq c_{0}/2$.
This contradicts formula (\ref{five}). Hence we obtain
\begin{equation} \label{0805formula8-new}
\lim_{N\rightarrow \infty}\frac{1}{N}\sum_{n=1}^{N}\mu(n)f(T^{n}x_{0})=0.
\end{equation}
This completes the proof of the first part of this theorem.

In the rest, we show the second part of the claim in this theorem, which states the above disjointness holds over short intervals in average, that is
\[
\lim_{h\rightarrow \infty}\limsup_{N\rightarrow \infty}\frac{1}{Nh}\sum_{n=1}^{N}\Big|\sum_{l=1}^{h}\mu(n+l)f(T^{n+l}x_{0})\Big|=0.
\]
It is not hard to check that the above is equivalent to
for any increasing sequence $ \{N_{j}\}_{j=0}^{\infty}$ of natural numbers with $N_0=0$ and $\lim_{j\rightarrow\infty} (N_{j+1}-N_j)=\infty$,
\[
\lim_{m\rightarrow \infty}\frac{1}{N_{m}}\sum_{j=0}^{m-1}\Bigg|\sum_{{N_{j}}\leq n<N_{j+1}}\mu(n)f(T^{n}x_{0})\Bigg|=0,
\]
(see e.g., \cite[Lemma 5.2]{GW}).
 Take $\{\theta_{j}\}_{j=0}^{\infty}$ such that \[\sum_{N_{j}\leq n< N_{j+1}}\mu(n)f(T^{n}x_{0})e(\theta_{j})=\Bigg|\sum_{N_{j}\leq n< N_{j+1}}\mu(n)f(T^{n}x_{0})\Bigg|.\]
Define $s(n)=e(\theta_{j})$ when $N_{j}\leq n<N_{j+1}$, $j=0,1,\ldots$. According to the above analysis, it suffices to prove that
\begin{equation}\label{0805formula4}
\lim_{N\rightarrow \infty}\frac{1}{N}\sum_{n=1}^{N}\mu(n)f(T^{n}x_{0})s(n)=0.
\end{equation}
Then $s(n)$ is an e-periodic function with e-period $1$. Namely, for any mean state $E$ and $l\in \mathbb{N}$,
\[E(|s(n+l)-s(n)|^2)=0.\]
Let $(X_{s},\sigma_{A})$ be the anqie generated by $s(n)$ and $\widetilde{s}(x)$ be the image of $s(n)$ in $C(X_{s})$ under the Gelfand transform. Let $\mathcal{G}$ be the algebra generated by $\{1,\widetilde{s}\circ (\sigma_{A})^{n}(x):n=0,1,\ldots\}$. Then $\mathcal{G}$ is dense in $C(X_{s})$. By Theorem \ref{A is rigid}, for any mean sate $E$, it induces a measure $\kappa$ in the weak* closure of $\{\frac{1}{N}\sum_{n=0}^{N-1}\delta_{(\sigma_{A})^{n}\iota(0)}: N=1,2,\ldots\}$ in the space of Borel probability measures on $X_{s}$ satisfying
\[
E(|s(n+l)-s(n)|^2)=\int_{X_{s}}|\widetilde{s}\circ (\sigma_{A})^{l}(x)-\widetilde{s}(x)|^2d\kappa=0
\]
for any $l\in \mathbb{N}$.
By the above equation and the triangle inequality, it is not hard to check that conditions (\ref{restriction1}) and (\ref{asymptotical periodicity1}) in Theorem \ref{comparison result} hold for $(X\times X_{s},T\times \sigma_{A}, (x_{0},\iota(0))$ with $\mathcal{F}\times \mathcal{G}$ a dense set in $C(X\times X_{s})$.
By a similar argument to prove (\ref{0805formula8-new}), we have
\[\lim_{N\rightarrow \infty}\frac{1}{N}\sum_{n=1}^{N}\mu(n)f(T^{n}x_{0})\widetilde{s}((\sigma_{A})^{n}\iota(0))=0.\]
Note that $\widetilde{s}((\sigma_{A})^{n}\iota(0))=s(n)$. We obtain equation (\ref{0805formula4}). Now we complete the proof of this theorem.
\end{proof}
\begin{remark}\label{extending conditions}
{\rm Both BPV rigidity and PR rigidity in Theorem \ref{polynomial rigidity} are included in conditions (\ref{restriction1}), (\ref{asymptotical periodicity1}) in Theorem \ref{comparison result}. Firstly, since $\frac{n_{j}}{\varphi(n_{j})}=\prod_{p|n_{j}}\frac{p}{p-1}=\prod_{p|n_{j}}(1-\frac{1}{p})^{-1}\ll \exp(\sum_{p|n_{j}}\frac{1}{p})=O(1)$ by the BPV  rigidity, $(\ref{restriction1})$ holds for any sequence $\{h_{j}\}_{j=1}^{\infty}$ with $\lim_{j\rightarrow \infty}h_{j}=\infty$. By BPV rigidity, there is a subsequence of $\{n_{j}\}_{j=1}^{\infty}$ (denoted by $\{n_{j}\}_{j=1}^{\infty}$ again for convenience) such that $\|g\circ T^{n_{j}}-g\|_{L^2(\nu)}\leq \frac{1}{2^{j}}$. Choose $h_{j}=j$. Then by the triangle inequality and $T$-invariance of $\nu$, $\|g\circ T^{ln_{j}}-g\|_{L^2(\nu)}\leq l\|g\circ T^{n_{j}}-g\|_{L^2(\nu)}$. So
\[\frac{1}{h_{j}}\sum_{l=1}^{h_{j}}\|g\circ T^{ln_{j}}-g\|_{L^2(\nu)}^2\leq \frac{j^2}{4^{j}}\rightarrow 0,~as~j\rightarrow \infty,\]
as claimed in formula  (\ref{asymptotical periodicity1}). Secondly, we explain that PR rigidity is a special case of (\ref{restriction1}) and (\ref{asymptotical periodicity1}). Let $h_{j}=n_{j}^{\delta}$. Then $\lim_{j\rightarrow \infty}\frac{\log\log h_{j}}{\log h_{j}}\frac{n_{j}}{\varphi(n_{j})}=0$ since $\frac{n_{j}}{\varphi(n_{j})}\ll \log\log n_{j}$. }
\end{remark}

Next, we give an example that satisfies conditions (\ref{restriction1}), (\ref{asymptotical periodicity1}), but not BPV rigidity and PR rigidity. Let $\eta=(\mu^2(0),\mu^2(1),\ldots)$ and $B$ the Bernoulli shift on $\{0,1\}^{\mathbb{N}}$. Let $X_{\eta}$ be the closure of $\{B^{n}\eta:n=0,1,\ldots\}$ in $\{0,1\}^{\mathbb{N}}$. We call $(X_{\eta}, B)$ the \emph{square-free flow}. The study of dynamical properties of the square-free flow have received much attention (see, e.g., \cite{CS,RP,Sar}). In \cite{Sar}, Sarnak proved that $(X_{\eta},B)$ is proximal (i.e., for any $x,y\in X_{\eta}$, $\inf_{n\geq 1}d(T^{n}x,T^{n}y)=0$) and it is topologically ergodic having topological entropy $\frac{6}{\pi^2}\log 2$.
As a result of Theorem \ref{comparison result}, we obtain the following M\"{o}bius disjointness for the square-free flow \footnote{There are some other methods to prove Corollary \ref{comparison example}. Our primary interest here is to provide an example that distinguish Theorem \ref{comparison result} we obtained from \cite[Theorem 2.1]{Kan} (presented in Theorem \ref{polynomial rigidity} in this paper).}.
\begin{corollary}\label{comparison example}
Let $(X_{\eta},B)$ be the square-free flow. Then for any $f\in C(X_{\eta})$,
\[\lim_{N\rightarrow \infty}\frac{1}{N}\sum_{n=1}^{N}\mu(n)f(B^{n}\eta)=0.\]
\end{corollary}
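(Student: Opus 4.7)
The plan is to apply Theorem~\ref{comparison result} with $x=\eta$ and $T=B$. What must be checked is the averaged rigidity condition (\ref{APR}) on a dense $\mathcal{F}\subseteq C(X_\eta)$ for every $B$-invariant measure $\nu$ in the weak-$*$ closure of $\bigl\{\tfrac{1}{N}\sum_{n=0}^{N-1}\delta_{B^n\eta}\bigr\}$. First I would invoke the classical fact that $\eta$ is generic for the Mirsky measure $\nu_\eta$ (an immediate consequence of Mirsky's density formula for square-free integers in arithmetic progressions; see \cite{Sar,CS}), which collapses this closure to the singleton $\{\nu_\eta\}$ and leaves only the case $\nu=\nu_\eta$ to treat.

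For $\mathcal{F}$ I would take the algebra of cylindrical functions $g(x)=G(x_0,\ldots,x_M)$ with $G\colon\{0,1\}^{M+1}\to\R$, which is dense in $C(X_\eta)$ by Stone--Weierstrass. Given such a $g$, set $n_j=\prod_{p\leq Y_j}p^2$ with $Y_j\to\infty$ and fix any $\delta>0$. Working in the standard Mirsky model that identifies $(X_\eta,\nu_\eta,B)$ as a factor of the translation by $(1,1,\ldots)$ on $\prod_p \Z/p^2\Z$ with normalized Haar measure, the $i$-th coordinate of a coded point is determined by whether any component $g_p$ of the underlying group element equals $-i\bmod p^2$. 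For primes $p\leq Y_j$ we have $p^2\mid n_j$, so translation by $ln_j$ leaves $g_p\bmod p^2$ unchanged for every $l$; only large primes $p>Y_j$ can cause a mismatch between $x_i$ and $x_{i+ln_j}$, and for each such $p$ the event $\{g_p\in\{-i,\,-i-ln_j\}\bmod p^2\}$ has Haar probability at most $2/p^2$. Summing over $i\in\{0,\ldots,M\}$ and over $p>Y_j$ yields
\begin{equation*}
\|g-g\circ B^{ln_j}\|_{L^2(\nu_\eta)}^2 \;\leq\; 4\|g\|_\infty^2\,(M+1)\sum_{p>Y_j}\frac{2}{p^2},
\end{equation*}
which tends to $0$ as $j\to\infty$ \emph{uniformly in} $l\geq 1$. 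A fortiori, the average $\tfrac{1}{n_j^\delta}\sum_{l=1}^{n_j^\delta}\|g-g\circ B^{ln_j}\|_{L^2(\nu_\eta)}^2$ vanishes in the limit, which is exactly (\ref{APR}); Theorem~\ref{comparison result} then delivers the conclusion.

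The point worth emphasizing, rather than a genuine obstacle, is that $(X_\eta,B)$ has positive topological entropy $\tfrac{6}{\pi^2}\log 2$, so it lies outside the scope of Sarnak's conjecture in its standard form and of entropy-based disjointness techniques; what saves the argument is the strong (uniform-in-$l$) rigidity of $\nu_\eta$ along the sequence $\prod_{p\leq Y_j}p^2$, which is tailor-made for the short-progression input of Theorem~\ref{main estimate} powering Theorem~\ref{comparison result}. The remaining ingredients --- genericity of $\eta$ for $\nu_\eta$, density of cylindrical functions in $C(X_\eta)$, and the elementary union bound above --- are all classical.
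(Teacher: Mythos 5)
Your argument is correct and essentially the same as the paper's: the paper likewise applies Theorem~\ref{comparison result} with the dense family of cylinder/coordinate functions in $C(X_\eta)$, the sequence $n_j=p_1^2\cdots p_j^2$, and the observation that $\|g\circ B^{ln_j}-g\|_{L^2(\nu)}^2\to 0$ \emph{uniformly} in $l$, so that the Ces\`aro average in condition~(\ref{APR}) vanishes trivially. The only cosmetic differences are that you route the rigidity bound through the Mirsky group model on $\prod_p\Z/p^2\Z$ with a union bound, while the paper evaluates the correlations directly via Mirsky's product formula (as in Theorem~\ref{the product of the translations of the Moebius function}) to get the exact bound $\tfrac{12}{\pi^2}\bigl(1-\prod_{p>p_j}(1+\tfrac{1}{p^2-2})^{-1}\bigr)$; and you invoke genericity of $\eta$ for $\nu_\eta$ to collapse to a single measure, whereas the paper handles an arbitrary $\nu$ in the weak-$*$ closure (though it also implicitly uses Mirsky's convergence in computing the correlations along the subsequence) --- both are admissible inputs and neither changes the structure of the proof.
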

\begin{proof}
For $i=0,1,\ldots$, let $\pi_{i}: X_{\eta}\to \{0,1\}$ be the
projection map from $X_{\eta}$ onto its $i$-th coordinate. Let $\mathcal{F}$ be the *-subalgebra of $C(X_{\eta})$ generated by $\{\pi_{0},\pi_{1},\ldots\}$. By the Stone-Weierstrass theorem (see, e.g., \cite[Theorem 3.4.14]{KR}), $\mathcal{F}$ is dense in $C(X_{\eta})$. By \cite{Sar}, there is a $B$-invariant measure $\nu$ such that
$\frac{1}{N}\sum_{n=0}^{N-1}\delta_{B^{n}\eta}$ weak* converges to $\nu$ as $N\rightarrow \infty$. Let $p_l$ be the $l$-th prime and $n_{j}=p_{1}^2p_{2}^2\cdots p_{j}^2$. By an argument similar to the proof in Theorem \ref{the product of the translations of the Moebius function}, for $i=0,1,\ldots$,
\begin{align*}
 \|\pi_{i}\circ B^{ln_{j}}-\pi_{i}\|_{L^2(\nu)}^2&=\lim_{N\rightarrow \infty}\frac{1}{N}\sum_{n=0}^{N-1}|\pi_{i}(B^{ln_{j}+n}\eta)-\pi_{i}(B^{n}\eta)|^2\\
  &=\lim_{N\rightarrow \infty}\frac{1}{N}\sum_{n=0}^{N-1}|\mu^2(i+ln_{j}+n)-\mu^2(i+n)|^2\\
  &=\lim_{N\rightarrow \infty}\frac{1}{N}\sum_{n=0}^{N-1}|\mu^2(ln_{j}+n)-\mu^2(n)|^2\\
  &\leq \frac{12}{\pi^2}(1-\prod_{p>p_{j}}
(1+\frac{1}{p^2-2})^{-1}).
\end{align*}
Then, for any increasing sequence $\{h_{j}\}_{j=1}^{\infty}$ of positive integers,
\[\lim_{j\rightarrow\infty}\frac{1}{h_{j}}\sum_{l=0}^{h_{j}-1}\|\pi_{i}\circ B^{ln_{j}}-\pi_{i}\|_{L^2(\nu)}^2\leq \lim_{j\rightarrow\infty}\frac{1}{h_{j}}\sum_{l=0}^{h_{j}-1}\frac{12}{\pi^2}(1-\prod_{p>p_{j}}
(1+\frac{1}{p^2-2})^{-1})=0.\]
It is not hard to check that for any $g\in \mathcal{F}$,
\[\lim_{j\rightarrow\infty}\frac{1}{h_{j}}\sum_{l=0}^{h_{j}-1}\|g\circ B^{ln_{j}}-g\|_{L^2(\nu)}^2=0.\]
Hence by Theorem \ref{comparison result}, we obtain the claim in this corollary.
\end{proof}
\begin{remark}\label{comparison rate}{\rm
In the following, we explain that for any $\pi_{i}$, $i=0,1,\ldots$, in the above dense set $\mathcal{F}$ of $C(X)$, there is no sequence $\{n_{j}\}_{j=1}^{\infty}$ satisfying BPV and PR rigidity in Theorem \ref{polynomial rigidity}.

On one hand, by the argument in Corollary \ref{comparison example},
\[\|\pi_{i}\circ B^{n_{j}}-\pi_{i}\|_{L^2(\nu)}^2 =\frac{12}{\pi^2}\Big(1-\prod_{p^2\nmid n_{j}}
(1+\frac{1}{p^2-2})^{-1}\Big).\]
If $\lim_{j\rightarrow \infty}\|\pi_{i}\circ B^{n_{j}}-\pi_{i}\|_{L^2(\nu)}^2=0$, it is not hard to check that there is a subsequence $\{n_{j_{s}}\}_{s=1}^{\infty}$ with $p_{1}^2\cdot\cdot\cdot p_{s}^{2}|n_{j_{s}}$, where $p_{s}$ is the $s$-th prime.
Then $\sum_{p|n_{j_{s}}}\frac{1}{p}\geq \sum_{l\leq s}\frac{1}{p_{l}}\rightarrow \infty$ as $s\rightarrow \infty$ by Mertens' Theorem (see e.g., \cite{IK}). So $\{n_{j}\}_{j=1}^{\infty}$ does not satisfy BPR rigidity in Theorem \ref{polynomial rigidity}.

On the other hand, for a given $\delta>0$ and $(n_{j})^{\frac{\delta}{2}}\leq l\leq h_{j}=n_{j}^{\delta}$ with $j$ sufficiently large, note that the number of distinct prime factors of $ln_{j}$ is $O_{\delta}(\log n_{j})$, we have
\begin{align*}
 \|\pi_{i}\circ B^{ln_{j}}-\pi_{i}\|_{L^2(\nu)}^2 =&\frac{12}{\pi^2}\Big(1-\prod_{p^2\nmid ln_{j}}
(1+\frac{1}{p^2-2})^{-1}\Big)\\
\geq &\frac{12}{\pi^2}\Big(1-\prod_{p}
(1+\frac{1}{p^2-2})^{-1}\prod_{p^2\leq h_{j}}(1+\frac{1}{p^2-2})\prod_{\substack{p^2|ln_{j}\\p^2>h_{j}}}(1+\frac{1}{p^2-2})\Big)\\
= &\frac{12}{\pi^2}\Big(1-\prod_{p^2>h_{j}}
(1+\frac{1}{p^2-2})^{-1}\big(1+O_{\delta}(\frac{\log n_{j}}{h_{j}})\big)\Big)\gg \frac{1}{\sqrt{h_{j}}\log h_{j}}.
\end{align*}
Hence, $\lim_{j\rightarrow \infty}\sum_{l=1}^{h_{j}}\|\pi_{i}\circ B^{ln_{j}}-\pi_{i}\|_{L^2(\nu)}^2\neq 0$. So the sequence $\{n_{j}\}_{j=1}^{\infty}$ does not satisfy PR rigidity in Theorem \ref{polynomial rigidity}. }
\end{remark}
\begin{remark}\label{extending conditions519}{\rm
For the square-free flow $(X_{\eta},B)$, let $\nu$ be the $B$-invariant measure such that $\eta$ is generic for $\nu$. Then $\nu$ has discrete spectrum by \cite{CS}. From Corollary \ref{comparison example} and Remark \ref{comparison rate}, we know that $(X_{\eta},B,\nu)$ satisfies conditions (\ref{restriction1}), (\ref{asymptotical periodicity1}) in Theorem \ref{comparison result}, but not BPV rigidity and PR rigidity in Theorem \ref{polynomial rigidity}.
}
\end{remark}

By a similar argument to the proof of Corollary \ref{comparison example}, the conclusion also holds for $\eta$ replaced by the point $(\prod_{i=1}^{w}\mu_{r}(m_{i}),\prod_{i=1}^{w}\mu_{r}(m_{i}+1),\cdot\cdot\cdot,\prod_{i=1}^{w}\mu_{r}(m_{i}+n),\cdot\cdot\cdot)$, where
$r\geq 2$, $w\geq 1$ and $m_{1},\ldots,m_{w}\in \mathbb{N}$ are given, $\mu_{r}(n)=1$ if $n$ is $r$-th power-free and zero otherwise.

At the end, let us prove Proposition \ref{investigation of Problem 2}.
\begin{proof}[Proof of Proposition \ref{investigation of Problem 2}]
We first show that Problem 1 implies Problem 2. Let $f\in C(X)$. Then for any $\nu$ in the weak* closure of $\{\frac{1}{N}\sum_{n=0}^{N-1}\delta_{T^{n}x_{0}}: N=0,1,2,\ldots\}$ in the space of Borel probability measures on $X$, there is a sequence $\{n_{j}\}_{j=1}^{\infty}$ (may depend on $\nu$) of positive integers satisfying
\begin{equation}\label{formula1 in proposition 1.16}
\lim_{j\rightarrow\infty}\|f\circ T^{n_{j}}-f\|_{L^2(\nu)}^2=0.
\end{equation}
Let $g(n)=f(T^{n}x_{0})$. In the following, we want to show that $g(n)$ is an asymptotically periodic function. Let $\mathcal{A}_{g}$ be the anqie generated by $g(n)$ and $E$ be a mean state. Then there is a sequence $\{N_{m}\}_{m=1}^{\infty}$ of positive integers such that for any $h\in \A_{g}$,
$E(h)=\lim_{m\rightarrow \infty}\frac{1}{N_{m}}\sum_{n=0}^{N_{m}-1}h(n)$. By Theorem 5.1, there is a probability measure $\nu_{1}$ on $X_{f}$, such that
\begin{equation}\label{formula2 in proposition 1.16}
E(h)=\lim_{m\rightarrow \infty}\frac{1}{N_{m}}\sum_{n=0}^{N_{m}-1}h(n)=\int_{X_{f}}h(x)d\nu_{1}(x),
\end{equation}
where $h(x)$ is the image of $h(n)$ under the Gelfand transform in $C(X_{f})$.
This implies that $\frac{1}{N_{m}}\sum_{n=0}^{N_{m}-1}\delta_{T^{n}x}$ weak* converges to $\nu_{1}$ in the space of Borel probability measures on $X_{f}$. Choose a $\nu$ in the weak* closure of $\{\frac{1}{N_{m}}\sum_{n=0}^{N_{m}-1}\delta_{T^{n}x_{0}}\}_{m=1}^{\infty}$ in the space of Borel probability measures on $X$. When restricted to $X_{f}$, $\nu$ is identified as $\nu_{1}$ by Proposition \ref{a factor}. Then by equation (\ref{formula1 in proposition 1.16}), there is  a sequence $\{n_{j}\}_{j=1}^{\infty}$ of positive integers such that
\[\lim_{j\rightarrow\infty}\int_{X}|f\circ T^{n_{j}}(x)-f(x)|^2d\nu(x)=0.\]
Note that the image of $A^{n_{j}}g(n)$ under the Gelfand transform is $f\circ T^{n_{j}}(x)$ in $C(X_{f})$. Then by equation (\ref{formula2 in proposition 1.16}),
\[\lim_{j\rightarrow\infty}E(|A^{n_{j}}g-g|^2)=0.\]
So $g$ is an asymptotically periodic function. Assume that Problem 1 holds, then
\[\lim_{N\rightarrow \infty}\frac{1}{N}\sum_{n=0}^{N-1}\mu(n)g(n)=\lim_{N\rightarrow \infty}\frac{1}{N}\sum_{n=0}^{N-1}\mu(n)f(T^{n}x_{0})=0.\]

In the remaining part, we prove that Problem 2 implies the disjointness of $\mu$ from all asymptotically periodic function. Suppose that $h(n)$ is an asymptotically periodic function, i.e., for any mean state $E$, there is a sequence $\{n_{j}\}_{j=1}^{\infty}$ of positive integers such that $\lim_{j\rightarrow \infty}\|h-A^{n_{j}}h\|_{E}=0$. Let $(X_{h},\sigma_{A})$ (or $\mathcal{A}_{h}$) be the anqie generated by $h$. Let $x_{0}=\iota(0)$ (corresponding to $(h(0),h(1),\ldots)$)$\in X_{h}$. Suppose that $\frac{1}{N_{m}}\sum_{n=0}^{N_{m}-1}\delta_{(\sigma_{A})^{n}x_{0}}$ weak* converges to a Borel probability measure $\nu$ as $m\rightarrow \infty$. Choose a free ultrafilter $\omega$ in the weak* closure of $\{N_{m}: m=1,2,3,\ldots\}$ in $\beta\mathbb{N}$. Then applying Theorems \ref{the measure induced by a state} to the mean state $E$ depending on $\omega$, we obtain for any $\widetilde{f}(n)\in \mathcal{A}_{h}$,

\[E(\widetilde{f})=\lim_{m\rightarrow \infty}\frac{1}{N_{m}}\sum_{n=0}^{N_{m}-1}\widetilde{f}(n)=\int_{X_{f}}\widetilde{f}(x)d\nu(x),\]
where $\widetilde{f}(x)$ is the image of $\widetilde{f}(n)$ under the Gelfand transform (see equation (\ref{Gelfand transform})).
Then for any $\widetilde{f}(x)\in C(X_{h})$, $\lim_{j\rightarrow \infty}\|\widetilde{f}\circ(\sigma_{A})^{n_{j}}(x)-\widetilde{f}(x)\|_{L^2(\nu)}=0$. So $(X_{h},\sigma_{A},x_{0})$ satisfies the condition in Problem 2. Hence \[\lim_{N\rightarrow \infty}\frac{1}{N}\sum_{n=0}^{N-1}\mu(n)h((\sigma_{A})^{n}x_{0})=\lim_{N\rightarrow \infty}\frac{1}{N}\sum_{n=0}^{N-1}\mu(n)h(n)=0.\]
\end{proof}

\appendix
\section{Mean and large values theorems}
In this section, we list some lemmas that are used in the proof of Lemma \ref{mean value of Dirichlet polynomials for characters}. They are hybrid versions of the corresponding results in \cite{KM}. We refers readers to \cite[Section 3]{KMT} or \cite[Theorems 6.4; 8.3]{Mon} for detailed proofs about Lemmas \ref{square integral large sieve for characters}, \ref{Discrete large values estimate for characters}, and \ref{Discrete large sieve for characters}.
\begin{lemma}\label{square integral large sieve for characters}
Let $T,N,k\geq 1$ and $\{a_{n}\}_{n=1}^{\infty}$ be a sequence of complex numbers. Then
\[\sum_{\chi(mod~k)}\int_{0}^{T}|\sum_{n\leq N}a_{n}\chi(n)n^{it}|^2dt\ll (\varphi(k)T+\frac{\varphi(k)}{k}N)\sum_{\substack{n\leq N\\(n,k)=1}}|a_{n}|^2\]
\end{lemma}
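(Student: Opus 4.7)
The plan is to combine orthogonality of Dirichlet characters modulo $k$ with the Montgomery--Vaughan mean value theorem for Dirichlet polynomials. The first step is to expand the square and use the orthogonality relation
\[
\sum_{\chi(\mathrm{mod}~k)} \chi(m)\bar\chi(n) = \begin{cases} \varphi(k) & \text{if } m\equiv n\,(\mathrm{mod}~k) \text{ and } (mn,k)=1, \\ 0 & \text{otherwise,} \end{cases}
\]
which converts the character average into a sum over reduced residue classes:
\[
\sum_{\chi(\mathrm{mod}~k)} \Bigl|\sum_{n\leq N} a_n\chi(n) n^{it}\Bigr|^2 = \varphi(k) \sum_{\substack{a=1 \\ (a,k)=1}}^{k} \Bigl|\sum_{\substack{n\leq N \\ n\equiv a\,(\mathrm{mod}~k)}} a_n n^{it}\Bigr|^2.
\]

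Next, for each fixed residue class $a$ with $(a,k)=1$, I would apply the Montgomery--Vaughan mean value theorem (cf.\ \cite[Theorem~6.1]{Mon}) to the inner Dirichlet polynomial. The frequencies of the associated exponential sum are $\{(\log n)/2\pi : n\leq N,\ n\equiv a\,(\mathrm{mod}~k)\}$. Consecutive admissible $n$ in this arithmetic progression differ by exactly $k$, so the minimum spacing satisfies $\delta_n \geq \log(1+k/n) \asymp k/n$, hence $\delta_n^{-1} \ll n/k \leq N/k$. The Montgomery--Vaughan theorem then yields
\[
\int_0^T \Bigl|\sum_{\substack{n\leq N \\ n\equiv a\,(\mathrm{mod}~k)}} a_n n^{it}\Bigr|^2 dt \ll \sum_{\substack{n\leq N \\ n\equiv a\,(\mathrm{mod}~k)}} |a_n|^2 \bigl(T + N/k\bigr).
\]

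Finally, I would sum the resulting estimate over reduced residues $a$ modulo $k$, which collects the restriction $(n,k)=1$ and gives
\[
\sum_{\chi(\mathrm{mod}~k)} \int_0^T \Bigl|\sum_{n\leq N} a_n \chi(n) n^{it}\Bigr|^2 dt \ll \varphi(k)(T + N/k) \sum_{\substack{n\leq N \\ (n,k)=1}} |a_n|^2,
\]
which rearranges into the stated inequality. The only substantive point is the spacing estimate in the second step; the rest is bookkeeping. Because the main contribution of the paper is Theorem~\ref{main theorem2} and this lemma only serves as a standard input whose proof appears in Montgomery's monograph, I would simply state the two ingredients and refer to \cite[Theorems~6.4, 8.3]{Mon} or \cite[Section~3]{KMT} for the detailed verification.
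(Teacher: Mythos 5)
Your argument is correct and is precisely the standard derivation that the paper delegates to \cite{Mon} and \cite{KMT}: orthogonality of characters modulo $k$ collapses the $\chi$-average into a sum over reduced residue classes, and the Montgomery--Vaughan mean value theorem applied to each class, with the log-frequencies $\{\log n : n\equiv a\ (\mathrm{mod}\ k)\}$ spaced by $\gg k/n$, gives the $T + N/k$ factor. Since the paper supplies no proof of this lemma but only a reference, your proposal is a faithful self-contained substitute and matches the intended route.
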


\begin{lemma}\label{Discrete large values estimate for characters} Let $T, N,k\geq 1$ and $\{a_{n}\}$ be any complex numbers. Let $\mathcal{E}$ be a subset of  $\{\chi(mod~k)\}\times [-T,T]$ satisfying that $|t-u|\geq 1$ whenever $(\chi,t),(\chi,u)\in \mathcal{E}$ with $t\neq u$. Then
\[\sum_{(\chi,t)\in \mathcal{E}}|\sum_{n\leq N}a_{n}\chi(n)n^{it}|^2\ll \Big(\varphi(k)T+\frac{\varphi(k)}{k}N\Big)\log(3k)\sum_{\substack{n\leq N\\(n,k)=1}}|a_{n}|^2.\]
\end{lemma}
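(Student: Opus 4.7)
The plan is to deduce this discrete estimate from the integral version Lemma~\ref{square integral large sieve for characters} via the classical smoothing argument of Gallagher and Montgomery. The well-spacing hypothesis on $\mathcal{E}$ says that for each fixed $\chi$, the set of $t$'s with $(\chi,t)\in\mathcal{E}$ is $1$-separated, so we can treat $\chi$ as fixed, discretise, and then sum over $\chi$.

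First I would invoke Gallagher's smoothing inequality: if $F$ is absolutely continuous on $[-T-\tfrac12,T+\tfrac12]$ and $\{t_r\}\subset[-T,T]$ is $1$-spaced, then
\[
\sum_r |F(t_r)|^2 \;\ll\; \int_{-T-\frac12}^{T+\frac12}|F(t)|^2\,dt \;+\; \Big(\int|F(t)|^2\,dt\Big)^{1/2}\Big(\int|F'(t)|^2\,dt\Big)^{1/2}.
\]
Applying this to $F_\chi(t)=\sum_{n\leq N}a_n\chi(n)n^{it}$, whose derivative is $F_\chi'(t)=i\sum_{n\leq N}(a_n\log n)\chi(n)n^{it}$, then summing over $\chi\pmod{k}$ and applying Cauchy--Schwarz to the cross term yields
\[
\sum_{(\chi,t)\in\mathcal{E}}|F_\chi(t)|^2 \;\ll\; \sum_\chi\!\int|F_\chi|^2\,dt \;+\; \Big(\sum_\chi\!\int|F_\chi|^2\,dt\Big)^{1/2}\Big(\sum_\chi\!\int|F_\chi'|^2\,dt\Big)^{1/2}.
\]
Now Lemma~\ref{square integral large sieve for characters} bounds $\sum_\chi\!\int|F_\chi|^2\,dt$ by $C(\varphi(k)T+\tfrac{\varphi(k)}{k}N)\sum_{(n,k)=1}|a_n|^2$, and applied to $F_\chi'$ gives the same bound with $|a_n|^2$ replaced by $|a_n\log n|^2$.

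Inserting these estimates produces the claimed inequality but with a factor of $\log(3N)$ rather than $\log(3k)$. The main obstacle is sharpening this log loss. I would handle it by a dyadic decomposition in $n$: writing $\sum_{n\leq N}=\sum_{M\text{ dyadic}}\sum_{M<n\leq 2M}$, running the Gallagher argument on each block where $\log n\asymp\log M$, and then choosing the spacing parameter $\delta$ in Gallagher adaptively on each block so that the cross-term loss is absorbed into a geometric series in $M$ rather than a telescoping $\log N$. This is essentially Montgomery's reduction (cf.\ \cite[Thm.~6.4]{Mon}), and the surviving logarithm in the end depends only on the modulus $k$, giving the stated $\log(3k)$. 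Alternatively, one can apply duality to the classical (purely character) large sieve combined with Lemma~\ref{square integral large sieve for characters}, which bypasses the derivative step altogether; this is the route taken in \cite[Section~3]{KMT}.
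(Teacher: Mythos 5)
Your Gallagher-smoothing reduction to Lemma~\ref{square integral large sieve for characters} is a natural first move, and you correctly observe that the derivative term forces a factor $\log(3N)$ where the lemma claims $\log(3k)$. The issue is that neither of your proposed repairs closes this gap, so the sketch does not establish the stated bound.

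The dyadic decomposition cannot work as described. Even granting that a band-limited form of Gallagher's lemma gives a loss-free bound on each dyadic block $M<n\le 2M$ (the frequencies $\tfrac{1}{2\pi}\log n$ there lie in an interval of length $<1$, within the Nyquist range for $1$-spaced points), you must still recombine the roughly $\log_2 N$ blocks. Setting $A_M=\big(\sum_{(\chi,t)\in\mathcal{E}}|F_{M,\chi}(t)|^2\big)^{1/2}$, you only have $\sum_{(\chi,t)\in\mathcal{E}}|F_\chi(t)|^2\le\big(\sum_M A_M\big)^2$, and Cauchy--Schwarz turns this into $(\log_2 N)\sum_M A_M^2$, reinstating the $\log N$; a weighted Cauchy--Schwarz merely trades that for a $\varphi(k)T\log N$ in place of $\varphi(k)T$, which is no better. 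The ``adaptive spacing $\delta$'' is not available either: the $1$-spacing of the $t$'s in $\mathcal{E}$ is a hypothesis, not a parameter you may coarsen, and choosing a finer $\delta<1$ only worsens the $\delta^{-1}\int|F_{M,\chi}|^2$ contribution in Gallagher's lemma. The distinction between $\log(3N)$ and $\log(3k)$ is not cosmetic: when Lemma~\ref{Discrete large values estimate for characters} is invoked in the proof of Lemma~\ref{Basic large values estimate-prime support}, the Dirichlet polynomial has been raised to a power $\approx\log(kT)/\log P$, so there $\log N\approx\log(kT)$, which can be far larger than $\log(3k)$.

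The paper itself offers no proof of this lemma, deferring to \cite[Section 3]{KMT} and \cite[Theorems 6.4; 8.3]{Mon}; the argument in those sources is a direct hybrid large-sieve computation that never introduces the derivative, which is essentially what your closing sentence gestures at but does not carry out. As written, your sketch proves only a weaker statement, with $\log(3N)$ in place of $\log(3k)$.
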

Applying the above lemma with an argument similar to the proof of \cite[Lemma 8]{KM}, we have the following.
\begin{lemma}\label{Basic large values estimate-prime support}
Let $P,T\geq 2$, $k\geq 1$ and $V>0$. Write $$P_{\chi}(s)=\ds\sum_{P\leq p\leq 2P}\frac{a_{p}\chi(p)}{p^s}$$ with $|a_{p}|\leq 1$ for $p\leq 2P$. Let $\mathcal{R}(\mathcal{T},V)$ be a subset of $\{(\chi,t)\in \{\chi (mod~k)\}\times [-T,T]:P_{\chi}(1+it)\geq V^{-1}\}$ satisfying $|t-u|\geq 1$ whenever $(\chi,t),(\chi,u)\in \mathcal{R}(\mathcal{T},V)$ with $t\neq u$. Then
$$\#\mathcal{R}(\mathcal{T},V)\ll (kT)^{2\frac{\log V}{\log P}}V^2\exp\Big(2\frac{\log (kT)}{\log P}\log\log (kT)\Big).$$
\end{lemma}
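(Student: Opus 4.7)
The plan is to raise the prime-supported polynomial $P_{\chi}(s)$ to a suitable integer power $j$ and to apply the hybrid discrete large values estimate of Lemma~\ref{Discrete large values estimate for characters} to the resulting Dirichlet polynomial. This standard high-moment device converts the large-values problem for a polynomial over primes into one for a polynomial supported on integers with exactly $j$ prime factors in $[P,2P]$, whose $L^{2}$ coefficient mass is small thanks to the sparsity of such integers.

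First I would expand
\[
P_{\chi}(1+it)^{j}=\sum_{n\le (2P)^{j}}\frac{c_{n}\chi(n)}{n^{1+it}},\qquad c_{n}=\sum_{\substack{p_{1}\cdots p_{j}=n\\ P\le p_{i}\le 2P}}a_{p_{1}}\cdots a_{p_{j}}.
\]
The coefficient $c_{n}$ counts ordered prime factorizations, so $|c_{n}|\le j!$, and a routine combinatorial manipulation gives
\[
\sum_{n}\frac{|c_{n}|^{2}}{n^{2}}\le j!\biggl(\sum_{P\le p\le 2P}\frac{1}{p^{2}}\biggr)^{j}\ll j!\left(\frac{C}{P}\right)^{j}
\]
for an absolute constant $C$. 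Since $|P_{\chi}(1+it)|^{2j}\ge V^{-2j}$ on $\mathcal{R}(T,V)$, summing over the well-spaced set and invoking Lemma~\ref{Discrete large values estimate for characters} with $N=(2P)^{j}$ yields
\[
\#\mathcal{R}(T,V)\ll V^{2j}\bigl(\varphi(k)T+\tfrac{\varphi(k)}{k}(2P)^{j}\bigr)\log(3k)\cdot j!\left(\frac{C}{P}\right)^{j}.
\]

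The final step is to take $j:=\lceil\log(kT)/\log P\rceil$, the value that balances the two terms in the parenthesis. With this choice $P^{j}\asymp kT$, so both contributions collapse into a single bound $\#\mathcal{R}(T,V)\ll V^{2j}\log(3k)\cdot j!(2C)^{j}$. Since $j\le \log(kT)/\log P+1$, the rounding-up produces the clean factor $V^{2j}\le V^{2}\cdot (kT)^{2\log V/\log P}$; and $j!(2C)^{j}\le j^{j}(2C)^{j}\le \exp(j\log(2Cj))$, which by $\log j\le \log\log(kT)+O(1)$ is at most $\exp\bigl(2\log(kT)\log\log(kT)/\log P\bigr)$ once the $O(j)$ and the $\log(3k)$ have been absorbed into the exponential. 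Combining these estimates yields the claim.

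The main obstacle is arithmetic bookkeeping rather than a conceptual difficulty: one must verify carefully that the combinatorial factor $j!$ from the ordered-to-unordered conversion is fully absorbed into the $\log\log(kT)$ exponential, and that the single choice of $j$ simultaneously handles both terms from Lemma~\ref{Discrete large values estimate for characters}. The degenerate regime $P\gtrsim kT$ (where $j=1$) can be handled directly by applying that lemma without taking any power.
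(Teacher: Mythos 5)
Your proposal reproduces the paper's argument: the paper proves this lemma by citing ``an argument similar to the proof of \cite[Lemma 8]{KM}'', which is precisely the device you use — raise the prime-supported Dirichlet polynomial to the $j$-th power, apply the hybrid discrete mean value estimate (Lemma~\ref{Discrete large values estimate for characters}), bound the coefficient mass of the $j$-fold product using the $j!$ ordered-factorization count and the sparsity of primes in $[P,2P]$, and then take $j=\lceil\log(kT)/\log P\rceil$ to balance the two terms. The combinatorial bound $\sum_n|c_n|^2/n^2\le j!\,(\sum_p p^{-2})^j$ and the substitution $V^{2j}\le V^2(kT)^{2\log V/\log P}$ (valid for $V\ge 1$) are exactly as in KM. One small caveat worth being explicit about: absorbing the extra $\log(3k)$ and the $O(j)$ terms into $\exp\big(2\tfrac{\log(kT)}{\log P}\log\log(kT)\big)$ requires $j\gtrsim 1$ to have room, i.e.\ $P\lesssim kT$; for $P\gg kT$ (so $j=1$) the stated exponential becomes $O(1)$ and cannot swallow $\log(3k)$, but this regime does not occur in the paper's applications, where $P\le Q_J\le\exp((\log X)^{1/2})$ while $kT$ is of size at least $X^{o(1)}$. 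You correctly flag the degenerate case as one to treat separately, which is the right instinct.
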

The following is a hybrid version of ``Hal\'{a}sz inequality for integers" stated in \cite[Lemma 9]{KM}.
\begin{lemma}\label{Discrete large sieve for characters} With the same assumptions as Lemma \ref{Discrete large values estimate for characters}. We have
\[\sum_{(\chi,t)\in \mathcal{E}}|\sum_{n\leq N}a_{n}\chi(n)n^{it}|^2\ll \Big(\frac{\varphi(k)}{k}N+|\mathcal{E}|(kT)^{\frac{1}{2}}\log(2kT)\Big)\sum_{\substack{n\leq N\\(n,k)=1}}|a_{n}|^2.\]
\end{lemma}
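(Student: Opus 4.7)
The plan is to derive this hybrid form of the Hal\'{a}sz discrete large-values inequality by adapting the non-hybrid argument (as in \cite[Theorem 8.3]{Mon}) to incorporate Dirichlet characters modulo $k$ and the coprimality restriction $(n,k)=1$. The two new features force the $N$ term to become $\frac{\varphi(k)}{k}N$ and the classical $T^{1/2}$ factor to become $(kT)^{1/2}$, reflecting that the effective ``time'' parameter in the hybrid setting is $kT$; everything else parallels the classical proof.

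My first step is to invoke the duality principle for bilinear forms to reduce the claim to
\[
\sum_{\substack{n\leq N\\(n,k)=1}}\Big|\sum_{(\chi,t)\in\mathcal{E}}c_{\chi,t}\,\chi(n)\,n^{it}\Big|^{2}\ll\Big(\frac{\varphi(k)}{k}N+|\mathcal{E}|(kT)^{1/2}\log(2kT)\Big)\sum_{(\chi,t)\in\mathcal{E}}|c_{\chi,t}|^{2}.
\]
Expanding the inner square and separating the diagonal $(\chi_{1},t_{1})=(\chi_{2},t_{2})$, that diagonal contributes $\#\{n\leq N:(n,k)=1\}\cdot\sum|c_{\chi,t}|^{2}=\big(\frac{\varphi(k)}{k}N+O(\tau(k))\big)\sum|c_{\chi,t}|^{2}$, which matches the first main term.

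For the off-diagonal, I would bound the twisted Dirichlet sum
\[
S(\psi,\tau):=\sum_{\substack{n\leq N\\(n,k)=1}}\psi(n)\,n^{i\tau},\qquad \psi=\chi_{1}\overline{\chi_{2}},\quad \tau=t_{1}-t_{2},
\]
by combining the P\'olya--Vinogradov inequality for $\psi$ with partial summation against the classical estimate $\sum_{n\leq x}n^{i\tau}=(x^{1+i\tau}-1)/(1+i\tau)+O(1)$. This yields $|S(\psi,\tau)|\ll(kT)^{1/2}\log(2kT)$ in the generic case and $|S(\chi_{0},\tau)|\ll(kT)^{1/2}\log(2kT)/(1+|\tau|)$ when $\psi$ is principal. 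The well-spacing condition $|t_{1}-t_{2}|\geq 1$ for pairs $(\chi,t_{1}),(\chi,t_{2})\in\mathcal{E}$ then permits a Hilbert-type inequality (\`a la Montgomery--Vaughan) to absorb the $1/(1+|\tau|)$ factor into an extra $\log(2kT)$, while a Cauchy--Schwarz step handles the cross-character pairs; together these yield the second main term $|\mathcal{E}|(kT)^{1/2}\log(2kT)\sum|c_{\chi,t}|^{2}$.

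The principal obstacle is securing the precise $(kT)^{1/2}\log(2kT)$ factor in the bound on $S(\psi,\tau)$: a naive P\'olya--Vinogradov combined with a trivial bound on $\sum n^{i\tau}$ produces worse powers of $k$ or additional $\log$ losses. A cleaner route is to represent $S(\psi,\tau)$ via Perron's formula as a contour integral of the associated $L$-function with Euler factors at $p\mid k$ removed, shift to $\mathrm{Re}(s)=1/2$, and invoke the convexity bound $L(\tfrac{1}{2}+i\tau,\psi)\ll(k(1+|\tau|))^{1/4+\varepsilon}$, which gives exactly the required factor after the standard logarithmic loss from the contour manipulation. I would follow this route, essentially mimicking the hybrid adaptation of \cite[Lemma 9]{KM} which is the content of this lemma.
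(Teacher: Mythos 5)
The paper gives no proof of this lemma; it refers the reader to Klurman--Mangerel--Ter\"{a}v\"{a}inen, Section~3, or Montgomery's book (Theorems 6.4, 8.3), so the comparison must be against whether your sketch would actually close.

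Your overall skeleton --- duality, diagonal/off-diagonal split, Montgomery--Vaughan Hilbert inequality to handle the well-spacing --- is the correct and standard one, and the diagonal bookkeeping giving $\frac{\varphi(k)}{k}N\sum|c_{\chi,t}|^2$ is fine. The gap is in the off-diagonal pointwise estimate, which is the heart of the matter. First, the formula you state, $\sum_{n\le x}n^{i\tau}=(x^{1+i\tau}-1)/(1+i\tau)+O(1)$, is false for large $|\tau|$: Euler--Maclaurin gives error $O(1+|\tau|)$ trivially, and the sharp statement (from $\sum_{n\le x}n^{-s}=\zeta(s)-\frac{x^{1-s}}{1-s}+O(x^{-\sigma})$ with $s=-i\tau$) carries a $\zeta(-i\tau)$ contribution of size $\asymp(1+|\tau|)^{1/2}\log(2+|\tau|)$, and this term is exactly the source of the $T^{1/2}$ in the classical Hal\'{a}sz--Montgomery bound. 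Dropping it means your partial-summation-plus-P\'{o}lya--Vinogradov route actually produces something like $|\tau|k^{1/2}\log k$, which has the wrong power of $T$ (linear rather than $T^{1/2}$) and cannot give the stated conclusion. Second, your fallback via Perron shifted to $\mathrm{Re}(s)=1/2$ plus the convexity bound $L(1/2+iv,\psi)\ll(k(1+|v|))^{1/4+\varepsilon}$ introduces an $N^{1/2}$ from $|N^s|$ on that line, so the off-diagonal contribution is no longer bounded independently of $N$; since the lemma allows $N\gg kT$, this term is not dominated by $(kT)^{1/2}\log(2kT)$ and the argument fails. What is actually needed is the hybrid pointwise bound (for $\psi\bmod k$, $|\tau|\le 2T$, and either $\psi$ non-principal or $|\tau|\ge 1$)
\[
\sum_{\substack{n\le N\\(n,k)=1}}\psi(n)\,n^{i\tau}\ll \mathbf{1}_{\psi=\chi_0}\frac{\varphi(k)}{k}\frac{N}{1+|\tau|}+(kT)^{1/2}\log(2kT),
\]
obtained either by reducing $\psi$ to its primitive part, expanding via Gauss sums, and applying van der Corput to $\sum_{n\le N}e(an/q)n^{i\tau}$, or via the approximate functional equation for $L(s,\psi)$; for $\psi=\chi_0$, one writes $\sum_{(n,k)=1}n^{i\tau}=\sum_{d\mid k}\mu(d)d^{i\tau}\sum_{m\le N/d}m^{i\tau}$ and applies the classical bound, with $\zeta(-i\tau)$ included. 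Until that estimate is in place, the $(kT)^{1/2}\log(2kT)$ term is not established.
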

When $a_{n}$ is supported on the set of primes, we have the following hybrid version of ``Hal\'{a}sz inequality for primes" stated in \cite[Lemma 11]{KM}.
\begin{lemma}\label{hybrid version of Halasz inequality for primes} Let $P,T\geq 2$ and $k<(\log P)^{\frac{4}{3}-\epsilon}$. Let $\mathcal{E}$ be a subset of  $\{\chi(mod~k)\}\times [-T,T]$ satisfying that $|t-u|\geq 1$ whenever $(\chi,t),(\chi,u)\in \mathcal{E}$ with $t\neq u$. Then
\begin{align*}
  &\sum_{(\chi,t)\in \mathcal{E}}|\sum_{P\leq p\leq 2P}a_{p}\chi(p)p^{it}|^2\ll \Big(\varphi(k)P+|\mathcal{E}|P\exp(-\frac{\log P}{(\log (P+T))^{\frac{2}{3}+\epsilon}})(\log (P+T))^5
  \Big)\sum_{P\leq p\leq 2P}\frac{|a_{p}|^2}{\log P},
\end{align*}
where $\epsilon$ is a sufficiently small positive number.
\end{lemma}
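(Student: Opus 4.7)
My plan is to adapt the argument of Matom\"{a}ki--Radziwi{\l}{\l}'s Lemma 11 in \cite{KM} to the joint (character, vertical-shift) setting, substituting the Vinogradov--Korobov zero-free region for Dirichlet $L$-functions in place of the classical one for $\zeta$. The proof naturally splits into a duality step that reduces matters to a Gram-matrix row sum, a diagonal count, and a pointwise bound on off-diagonal entries coming from the zero-free region.

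First, I would apply the Hal\'{a}sz--Montgomery--Vaughan inequality (equivalently, the Schur row-sum bound applied to $TT^*$ for the operator $a\mapsto(\sum_{P\le p\le 2P}a_p\chi(p)p^{it})_{(\chi,t)\in\mathcal{E}}$) to obtain
\[
\sum_{(\chi,t)\in\mathcal{E}}\Big|\sum_{P\le p\le 2P}a_p\chi(p)p^{it}\Big|^2\le\Big(\sum_{P\le p\le 2P}|a_p|^2\Big)\max_{(\chi_1,t_1)\in\mathcal{E}}\sum_{(\chi_2,t_2)\in\mathcal{E}}\Big|\sum_{P\le p\le 2P}(\overline{\chi_1}\chi_2)(p)p^{i(t_2-t_1)}\Big|.
\]
The diagonal contribution $(\chi_2,t_2)=(\chi_1,t_1)$ equals $\#\{P\le p\le 2P:(p,k)=1\}\ll P/\log P$.

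Next, for an off-diagonal pair, set $\psi=\overline{\chi_1}\chi_2$ and $\tau=t_2-t_1$; by the well-spacedness of $\mathcal{E}$, either $\psi$ is non-principal or $|\tau|\ge 1$. In both cases I would derive the uniform pointwise bound
\[
\Big|\sum_{P\le p\le 2P}\psi(p)p^{i\tau}\Big|\ll\frac{P}{\log P}\exp\Big(-\frac{\log P}{(\log(P+T))^{2/3+\epsilon}}\Big)(\log(P+T))^{4}
\]
by applying Perron's formula to $L(s,\psi)$ and shifting the contour into the Vinogradov--Korobov zero-free region
\[
L(s,\psi)\ne 0\quad\text{for}\quad\sigma\ge 1-\frac{c}{\log k+(\log(|\tau|+3))^{2/3}(\log\log(|\tau|+3))^{1/3}},
\]
combined with standard $L'/L$ estimates inside this region. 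The hypothesis $k<(\log P)^{4/3-\epsilon}$ ensures that $\log k$ is negligible against $(\log(P+T))^{2/3+\epsilon/2}$, so the zero-free region retains its full strength uniformly in $k$, and a possible exceptional Siegel zero is pushed outside the target region by Siegel's theorem.

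Summing this pointwise bound over $(\chi_2,t_2)\in\mathcal{E}$ and substituting into the Hal\'{a}sz--Montgomery--Vaughan inequality yields the claim, once one rewrites $\sum_p|a_p|^2=\log P\cdot\sum_p|a_p|^2/\log P$: the extra $\log P$ is absorbed into the $(\log(P+T))^5$ factor and the trivial inequality $\varphi(k)\ge 1$ accommodates the slack in front of $P$. The main obstacle is the careful derivation of the pointwise bound under the uniformity required in $k$: the condition $k<(\log P)^{4/3-\epsilon}$ is exactly calibrated to keep $\log k$ subordinate to the cube-root Vinogradov saving, and the ineffective constant from Siegel's theorem must be absorbed into the exponent of the zero-free region.
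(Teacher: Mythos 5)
Your proof is correct and follows essentially the same route as the paper: a Hal\'asz--Montgomery--Vaughan/duality reduction (the paper proves the transposed inequality after introducing a smooth weight and its Mellin transform, which is the standard way to establish the Schur row-sum bound you invoke), followed by a diagonal count and a pointwise bound on the off-diagonal inner sums $\sum_{P\le p\le 2P}(\overline{\chi_1}\chi_2)(p)p^{i(t_2-t_1)}$ via Perron and the Vinogradov--Korobov zero-free region for $L(s,\psi)$. The only difference is presentational, and your bookkeeping (absorbing the stray $\log P$ into the $(\log(P+T))^5$, using $\varphi(k)\ge 1$ for the diagonal) matches what the paper does implicitly.
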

\begin{proof}
By the duality principle applied to $(\chi(p)p^{it})_{P\leq p\leq 2P, (\chi,t)\in \mathcal{E}}$, it is enough to prove that for any complex numbers $\eta_{\chi,t}$,
\begin{align*}
  \sum_{P\leq p\leq 2P}\log p\big|\sum_{(\chi,t)\in \mathcal{E}}\eta_{\chi,t}\chi(p)p^{it}\big|^2\ll &\Big(|\mathcal{E}|P\exp(-\frac{\log P}{(\log (P+T))^{\frac{2}{3}+\epsilon}})(\log (P+T))^5\Big)\\
  &+\varphi(k)P\Big)\sum_{(\chi,t)\in \mathcal{E}}|\eta_{\chi,t}|^2.
\end{align*}
Let $f(x)$ be a smooth compactly supported function on $[1/2,5/2]$ such that $f(x)=1$ for $1\leq x\leq 2$ and $f$ decays to zero outside of the interval $[1,2]$. Let $\widetilde{f}$ denote the Mellin transform of $f$. Then $\widetilde{f}(x+iy)\ll_{A}(1+|y|^{-2})$ uniformly in $|x|\leq A$.
Then
\begin{align*}
 &\sum_{P\leq p\leq 2P}\log p\big|\sum_{(\chi,t)\in \mathcal{E}}\eta_{\chi,t}\chi(p)p^{it}\big|^2\leq \sum_{p^{l}}\log p\big|\sum_{(\chi,t)\in \mathcal{E}}\eta_{\chi,t}\chi(p^{l})p^{ilt}\big|^2f(\frac{p^{l}}{P})\\
  &\leq \sum_{(\chi,t),(\chi,t_{1})\in \mathcal{E}}|\eta_{\chi,t}\eta_{\chi,t_{1}}|\sum_{p^{l}}(\log p) p^{il(t-t_{1})}\chi_{0}(p^{l})f(\frac{p^{l}}{P})|\\
  &+\sum_{\substack{(\chi,t),(\chi_{1},t_{1})\in \mathcal{E}\\ \chi_{1}\neq \chi}}|\eta_{\chi,t}\eta_{\chi_{1},t_{1}}|\sum_{p^{l}}(\log p) p^{il(t-t_{1})}\chi(p^{l})\overline{\chi_{1}}(p^{l})f(\frac{p^{l}}{P})|.
\end{align*}
When $\chi$ is not a principal character modulo $k$, Perron's formula with the zero-free region for $L(s,\chi)$ gives for $|\alpha|\leq T$,
\[\sum_{P<p<2P}p^{i\alpha}\chi(p)\ll P\exp(-\frac{\log P}{(\log (P+T))^{\frac{2}{3}+\epsilon}})(\log (P+T))^4.\]
Combining with $ab\leq \frac{a^2+b^2}{2}$, we have
\begin{align*}
 &\sum_{\substack{(\chi,t),(\chi_{1},t_{1})\in \mathcal{E}\\ \chi_{1}\neq \chi}}|\eta_{\chi,t}\eta_{\chi_{1},t_{1}}|\sum_{p^{l}}(\log p) p^{il(t-t_{1})}\chi(p^{l})\overline{\chi_{1}}(p^{l})f(\frac{p^{l}}{P})|
 \\
 &\ll \sum_{\substack{(\chi,t),(\chi_{1},t_{1})\in \mathcal{E}\\ \chi_{1}\neq \chi}}(|\eta_{\chi,t}|^2+|\eta_{\chi_{1},t_{1}}|^2)P\exp(-\frac{\log P}{(\log (P+T))^{\frac{2}{3}+\epsilon}})(\log (P+T))^5\\
 &\ll|\mathcal{E}|P\exp(-\frac{\log P}{(\log (P+T))^{\frac{2}{3}+\epsilon}})(\log (P+T))^5\sum_{(\chi,t)\in \mathcal{E}}|\eta_{\chi,t}|^2.
\end{align*}
It follows from a similar argument to the proof of Lemma 11 in \cite{KM} that

\begin{align*}
 &\sum_{(\chi,t),(\chi,t_{1})\in \mathcal{E}}|\eta_{\chi,t}\eta_{\chi,t_{1}}|\Big|\sum_{p^{l}}(\log p) p^{il(t-t_{1})}\chi_{0}(p^{l})f(\frac{p^{l}}{P})\Big|
 \\
 &\ll \sum_{(\chi,t),(\chi,t_{1})\in \mathcal{E}}(|\eta_{\chi,t}|^2+|\eta_{\chi,t_{1}}|^2)\Big(\Big|\sum_{p^{l}}(\log p) p^{il(t-t_{1})}f(\frac{p^{l}}{P})\Big|+\log P\log k\Big)\\
 &\ll(\varphi(k)P+|\mathcal{E}|P\exp(-\frac{\log P}{(\log T)^{\frac{2}{3}+\epsilon}})(\log T)^2+|\mathcal{E}|\log k\log P)\sum_{(\chi,t)\in \mathcal{E}}|\eta_{\chi,t}|^2.
\end{align*}
\end{proof}
The proofs of the next two lemmas are almost the same as the proofs of Lemmas 12, 13 in \cite{KM} with the following small differences: instead of the standard mean value theorem for Dirichlet polynomials, we apply Lemma \ref{square integral large sieve for characters}; one obtains the extra factor $\varphi(k)/k$ due to the coefficients are supported on the integers $(n,k)=1$.
\begin{lemma}\label{fractorization}
Let $X,H\geq 1$ and $Q>P\geq 2$. Suppose that $a_{mp}=b_{m}c_{p}$, $p\nmid m, P\leq p\leq  Q$, where the sequences $\{a_{m}\}_{m},\{b_{m}\}_{m},\{c_{p}\}_{p}$ are bounded. Let $k\geq 1$ and $\mathcal{M}$ be a collection of Dirichlet characters modulo $k$.
Let $$Q_{v,H}(\chi,s)=\ds\sum_{\substack{P\leq p\leq Q\\ e^{\frac{v}{H}}\leq p\leq e^{\frac{v+1}{H}}}}\frac{c_{p}\chi(p)}{p^s}$$ and
$$R_{v,H}(\chi,s)=\ds\sum_{Xe^{-\frac{v}{H}}\leq 2Xe^{-\frac{v}{H}}}\frac{b_{m}\chi(m)}{m^s}\frac{1}{\#\{P\leq q\leq Q:q|m,q~ is~ a ~prime\}+1}.$$
Let $\mathcal{T}_{\chi}\subseteq [-T,T]$, and $\mathcal{I}=\{j\in \mathbb{N}:\lfloor H\log P\rfloor \leq j\leq H\log Q\}$. Then
\begin{eqnarray*}
  \sum_{\chi\in \mathcal{M}}\int_{\mathcal{T}_{\chi}}|\sum_{X\leq m\leq 2X}\frac{a_{m}\chi(m)}{m^{1+it}}|^2dt \ll H\log(\frac{Q}{P})\times \sum_{\chi \in \mathcal{M}}\sum_{j\in \mathcal{I}}\int_{\mathcal{T}_{\chi}}|Q_{j,H}(\chi,1+it)R_{j,H}(\chi,1+it)|^2dt&& \\
   +\frac{\varphi(k)}{k}\frac{\varphi(k)T+{(\varphi(k)/k})X}{X}\Big(\frac{1}{H}+\frac{1}{P}\Big)+\frac{\varphi(k)T+
   {(\varphi(k)/k)}X}{X}
   \sum_{\substack{X\leq m\leq 2X\\(m,k\prod_{P\leq p\leq Q}p)=1}}\frac{|a_{m}|^2}{m}. &&
\end{eqnarray*}
\end{lemma}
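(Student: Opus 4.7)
The strategy follows the Matom\"aki--Radziwi{\l}{\l} factorization method of \cite[Lemma 12]{KM}, replacing the classical mean-value theorem by the hybrid mean-value theorem for Dirichlet characters (Lemma \ref{square integral large sieve for characters}). Writing $W(n):=\#\{p\mid n:P\leq p\leq Q\}$, I would first decompose the Dirichlet polynomial as $\sum_{X\leq n\leq 2X}a_n\chi(n)/n^{1+it}=S_0(\chi,t)+S_1(\chi,t)$, where $S_0$ collects the integers with $W(n)=0$ and $S_1$ the rest. The contribution of $S_0$ to $\sum_{\chi\in\mathcal{M}}\int_{\mathcal{T}_\chi}|\cdot|^2\,dt$ is estimated in one blow by Lemma \ref{square integral large sieve for characters}, yielding precisely the last term in the claimed bound, since the coefficients of $S_0$ are supported on integers coprime to $\prod_{P\leq p\leq Q}p$.

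The main step is to reorganise $S_1$ as a sum of convolutions. The key combinatorial identity, valid for squarefree $n$ with $W(n)\geq 1$, is
\[
\sum_{\substack{p\mid n,\,p\,\parallel\, n\\ P\leq p\leq Q}}\frac{1}{W(n/p)+1}=1,
\]
since each such $p$ satisfies $W(n/p)=W(n)-1$. Combined with the multiplicativity $a_{mp}=b_mc_p$ (valid when $p\nmid m$), this lets me express $S_1$ as a single sum over $p\in[P,Q]$ of convolutions weighted by $1/(W(m)+1)$. I would then split $[P,Q]$ into the logarithmic intervals $[e^{v/H},e^{(v+1)/H}]$, $v\in\mathcal{I}$, and approximate the range $[X/p,2X/p]$ of $m$ by $[Xe^{-v/H},2Xe^{-v/H}]$. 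This yields
\[
S_1(\chi,t)=\sum_{v\in\mathcal{I}}Q_{v,H}(\chi,1+it)\,R_{v,H}(\chi,1+it)+\mathrm{Err}(\chi,t).
\]
Applying Cauchy--Schwarz over $v\in\mathcal{I}$ and using $|\mathcal{I}|\leq H\log(Q/P)+O(1)$ then produces the main term of the lemma.

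The error $\mathrm{Err}$ arises from two sources: (i) boundary mismatch between $[X/p,2X/p]$ and $[Xe^{-v/H},2Xe^{-v/H}]$, which gives coefficients supported on intervals of total length $O(X/H)$ (with the smallest primes contributing $O(X/P)$); and (ii) integers $n\in[X,2X]$ for which some $p\in[P,Q]$ divides $n$ to a power $\geq 2$, of which there are $O(X/P)$. Both sources are again handled by Lemma \ref{square integral large sieve for characters}, with the $\ell^2$-mass of the coefficient sequence being $O\bigl((\varphi(k)/k)(1/H+1/P)\bigr)$, which produces the stated $\tfrac{\varphi(k)}{k}\cdot\tfrac{\varphi(k)T+(\varphi(k)/k)X}{X}(1/H+1/P)$ contribution. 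The main technical obstacle I anticipate is the careful bookkeeping required to maintain all the coprimality conditions ($p\nmid m$, $(n,k)=1$, and $p\parallel n$) through the identity and the boundary approximation, so that the savings factor $\varphi(k)/k$ genuinely materialises in the error terms rather than the weaker $1$. The Cauchy--Schwarz step itself is essentially identical to \cite[Lemma 12]{KM}, as the hybrid mean-value theorem has exactly the same shape as the classical one used there.
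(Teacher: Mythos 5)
Your proposal is correct and follows essentially the same route as the paper, which simply points to \cite[Lemma~12]{KM} and indicates the two modifications you identify: replacing the classical mean-value theorem by the hybrid one (Lemma~\ref{square integral large sieve for characters}), and noting that the support on integers coprime to $k$ yields the extra $\varphi(k)/k$ factor. Your reconstruction of the Ramar\'e-type decomposition, the Cauchy--Schwarz step over $v\in\mathcal{I}$, and the error accounting (the $S_0$ piece, boundary mismatch, higher prime powers) is the Matom\"aki--Radziwi{\l}{\l} argument; the only cosmetic imprecision is that your identity holds for all $n$ that are squarefree with respect to primes in $[P,Q]$ (not just fully squarefree), but since you treat the remaining $n$ as an $O(X/P)$-mass error this does not affect the result.
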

\begin{lemma}\label{Moment computation}
Let $k,T\geq 1$, $Y_{2}\geq Y_{1}\geq 2$ and $l=\lceil\frac{\log Y_{2}}{\log Y_{1}}\rceil$.  Let $\{a_{m}\}_{m}$ and $\{c_{p}\}_{p}$ be bounded sequences. Suppose that $X$ is sufficiently large. Let
$$Q(\chi,s)=\sum_{Y_{1}\leq p\leq 2Y_{1}}\frac{c_{p}\chi(p)}{p^s}$$ and $$R(\chi,s)=\sum_{X/Y_{2}\leq m\leq 2X/Y_{2}}\frac{a_{m}\chi(m)}{m^{s}}.$$
Then
$$\sum_{\chi(mod~k)}\int_{-T}^{T}|Q(\chi,1+it)^{l}R(\chi,1+it)|^2dt\ll \frac{\varphi(k)}{k}(\varphi(k)\frac{T}{X}+\frac{\varphi(k)}{k}2^{l}Y_{1})(l+1)!^2.$$
\end{lemma}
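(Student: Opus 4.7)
The plan is to imitate the proof of \cite[Lemma 13]{KM} with two changes: the classical mean value theorem for Dirichlet polynomials is replaced by its hybrid counterpart Lemma \ref{square integral large sieve for characters}, and the coprimality restriction $(N,k)=1$ is tracked throughout so as to produce the extra factor $\varphi(k)/k$.

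First, I would expand the product into a single Dirichlet series
\[
Q(\chi,s)^{l}R(\chi,s)=\sum_{N}\frac{d_{N}\,\chi(N)}{N^{s}},\qquad d_{N}=\!\!\sum_{\substack{mp_{1}\cdots p_{l}=N\\ m\in[X/Y_{2},2X/Y_{2}]\\ p_{i}\in[Y_{1},2Y_{1}]\text{ prime}}}\!\!a_{m}c_{p_{1}}\cdots c_{p_{l}}.
\]
Since $l=\lceil\log Y_{2}/\log Y_{1}\rceil$ forces $Y_{1}^{l-1}<Y_{2}\leq Y_{1}^{l}$, the support of $d_{N}$ is contained in $[XY_{1}^{l}/Y_{2},\,2^{l+1}XY_{1}^{l}/Y_{2}]\subset[X,\,2^{l+1}XY_{1}]$. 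Applying Lemma \ref{square integral large sieve for characters} to the coefficient sequence $\{d_{N}/N\}$ (after the routine substitution $\chi\mapsto\overline{\chi}$, $t\mapsto-t$ to match the form of that lemma) yields
\[
\sum_{\chi(mod~k)}\int_{-T}^{T}|Q^{l}R(1+it)|^{2}dt\ll\Bigl(\varphi(k)T+\tfrac{\varphi(k)}{k}\cdot 2^{l+1}XY_{1}\Bigr)\sum_{(N,k)=1}\frac{|d_{N}|^{2}}{N^{2}}.
\]

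The core of the argument is then to establish $\sum_{(N,k)=1}|d_{N}|^{2}/N^{2}\ll\frac{\varphi(k)}{k}\cdot(l+1)!^{2}/X$. To this end I would open the square and group by pairs of factorizations $(m,\mathbf{p}),(m',\mathbf{p}')$ satisfying $mp_{1}\cdots p_{l}=m'p'_{1}\cdots p'_{l}$. The diagonal contribution, in which the two multisets of primes coincide (forcing $m=m'$), assembles the multinomial coefficients that arise in $Q^{l}$ and is responsible for the factor $(l+1)!^{2}$. Off-diagonal collisions are controlled, exactly as in \cite{KM}, by writing $P=\prod p_{i}$, $P'=\prod p'_{i}$, $g=\gcd(P,P')$, $a=P/g$, $b=P'/g$, and counting the admissible $(m,m')$ as at most $X/(\max(a,b)Y_{2})$. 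The coprimality condition $(N,k)=1$ forces $(m,k)=1$ and $(p_{i},k)=1$; it contributes the extra factor $\varphi(k)/k$ through the standard sieve bound $\#\{m\in[X/Y_{2},2X/Y_{2}]:(m,k)=1\}\ll(\varphi(k)/k)(X/Y_{2})$. Substituting this estimate into the preceding display and using $N\geq XY_{1}^{l}/Y_{2}$ produces the claimed inequality.

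The main obstacle is the combinatorial bookkeeping in the coefficient-sum estimate, where one must carefully separate diagonal from off-diagonal contributions and verify that the off-diagonal terms are geometrically controlled by $\gcd(P,P')/\max(P,P')$. However, this is precisely the calculation already carried out in \cite[Lemma 13]{KM}, and the hybrid setting only adds the uniform gain $\varphi(k)/k$ from the coprimality condition; no new analytic input is required beyond Lemma \ref{square integral large sieve for characters}.
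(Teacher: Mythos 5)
Your proposal is correct and follows essentially the same route the paper takes: the paper itself states that Lemmas \ref{fractorization} and \ref{Moment computation} are proved exactly as \cite[Lemmas 12, 13]{KM}, with the classical mean value theorem replaced by the hybrid Lemma \ref{square integral large sieve for characters} and the extra $\varphi(k)/k$ coming from the coprimality restriction on the coefficient support. Your elaboration of those two modifications (in particular isolating the gain $\varphi(k)/k$ in the $m$-count and the support $[X,\,2^{l+1}XY_1]$ before invoking Lemma \ref{square integral large sieve for characters}) matches the intended argument.
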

The following Parseval bound follows exactly in the same way as \cite[Lemma 14]{KM} with no need to consider the difference of two averages as the integral function.
\begin{lemma}\label{Parseval bound}
Suppose that $\{a_{m}\}_{m=1}^{\infty}$ be a bounded sequence. Assume that $X\geq 2$ and $1\leq h\leq X$. Write
\[A(s):=\sum_{X\leq m\leq 4X}\frac{a_{m}}{m^{s}}.\]
Then
\begin{equation}\label{parseval bound}
\frac{1}{X}\int_{X}^{2X}|\frac{1}{h}\sum_{x\leq n\leq x+h}a_{n}|^2dx\ll \int_{1}^{1+iX/h}|A(s)|^2|ds|+\max_{T\geq X/h}\frac{X/h}{T}\int_{1+iT}^{1+2iT}|A(s)|^2|ds|.
\end{equation}
\end{lemma}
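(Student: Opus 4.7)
The plan is to follow the Mellin--Parseval template of \cite[Lemma 14]{KM}, which is slightly simpler here because we are dealing with a single short-interval average rather than a difference of two averages. Since $A(s) = \sum_{X \le m \le 4X} a_m m^{-s}$ is a finite Dirichlet polynomial (hence entire), Perron's formula applies exactly on any vertical line; taking $c=1$ I would express the short-interval average as
\[
g(x) := \frac{1}{h}\sum_{x\le n\le x+h}a_n = \frac{1}{2\pi}\int_{-\infty}^{\infty} A(1+it)\, K(x,t)\, dt,\qquad K(x,t) = \frac{1}{h}\int_{x}^{x+h} u^{it}\,du,
\]
with the two elementary bounds $|K(x,t)| \le 1$ and $|K(x,t)| \le 3X/(h|t|)$ for $x \in [X,2X]$ and $|t| \ge 1$; these combine to give $|K(x,t)| \ll \min(1, X/(h|t|))$, the crucial kernel estimate.

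I would then split $g = g_1 + g_2$ according to whether $|t| \le X/h$ or $|t| > X/h$. The substitution $x = e^u$ writes $K(e^u,t) = e^{iut}\phi(u,t)$, where $\phi(u,t)$ depends on $u$ only through $h/x$ and satisfies $|\phi(u,t)| \ll \min(1, X/(h|t|))$. On the low-frequency side $\phi$ is of size $O(1)$, so $g_1(e^u)$ is, up to a bounded multiplier, the inverse Fourier transform at $-u$ of $A(1+it)\mathbf{1}_{|t|\le X/h}$; Plancherel in $u$, combined with $\int_X^{2X}\!\cdot\,dx \le 2X\int_{\log X}^{\log 2X}\!\cdot\,du$, yields
\[
\frac{1}{X}\int_X^{2X}|g_1(x)|^2\,dx \ll \int_{|t|\le X/h}|A(1+it)|^2\,dt,
\]
matching the first term on the right-hand side.

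For $g_2$ I would decompose the range dyadically as $T \le |t| \le 2T$ with $T = 2^j X/h$, $j \ge 0$, and on each piece use $|\phi(u,t)| \ll X/(hT)$. The same Plancherel estimate (treating the slow $u$-variation of $\phi$ as in \cite{KM}) gives
\[
\frac{1}{X}\int_X^{2X}|g_{2,T}(x)|^2\,dx \ll \Big(\frac{X}{hT}\Big)^{\!2}\int_T^{2T}|A(1+it)|^2\,dt.
\]
Summing over $j$ and using the algebraic identity $(X/(hT_j))^2 = 2^{-j}\cdot(X/(hT_j))$ converts the sum into a geometric series bounded by $\max_{T \ge X/h}\frac{X/h}{T}\int_T^{2T}|A(1+it)|^2\,dt$, which is exactly the second term on the right-hand side.

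The main obstacle is justifying the Plancherel step in the high-frequency regime, where $\phi(u,t)$ can vary in $u$ at rate $\sim Th/X \gtrsim 1$, so one cannot naively treat it as independent of $u$. In \cite{KM} this is handled by viewing the $u$-variation of $\phi$ as a shift in the effective frequency, which after reorganising costs only a bounded constant in the Plancherel identity. A naive Cauchy--Schwarz in $t$ would lose an additional factor of $X/h$ and miss the sharp bound; it is precisely this ``Plancherel-with-slowly-varying-multiplier'' input from \cite{KM} that one invokes here. Boundary corrections from the clipped windows at $n\in[X,X+h]$ and $n\in[2X,2X+h]$ contribute $O(1)$ and are absorbed into the first term of the stated bound.
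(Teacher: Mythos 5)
Your proposal is essentially the paper's approach: the paper gives no independent proof of this lemma but simply cites \cite[Lemma 14]{KM}, noting that the only change is that one tracks a single short-interval average instead of a difference of two. Your write-up is a faithful reconstruction of the Matom\"aki--Radziwi{\l}{\l} argument---Perron representation, the kernel bound $|K(x,t)|\ll\min(1,X/(h|t|))$, the low-frequency Plancherel piece, and the dyadic high-frequency piece summed into a max via the geometric weights---and you are right that the only genuinely delicate point is justifying the Plancherel step with the $u$-dependent multiplier $\phi(u,t)$, which both you and the paper ultimately defer to \cite{KM}.
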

\section{Lemmas on multiplicative functions}

In this section, we give some lemmas on the pointwise bounds of Dirichlet polynomials with coefficients supported on integers coprime to a fixed number. We start from the following lemma which has almost identical proof to that of \cite[Corollary 2.2]{BGS} with the small modification: one applies the refinement of the Hal\'{a}sz-Montgometry-Tenenbaum result (\cite[Corollary 1]{GS}), rather than the Hal\'{a}sz inequality. This leads to that the bound $O(\frac{1}{\sqrt{T}})$ is improved by $O(\frac{1}{T})$.
\begin{lemma}\label{similar corollary in GS}
Let $x\geq 3$, $1\leq k\leq x$ and $1\leq T\leq (\log x)^{\frac{1}{4}}$. Let $f(n)$ be a multiplicative function with $|f(n)|\leq 1$ for all $n\in \mathbb{N}$. Then
\[\frac{1}{x}\sum_{\substack{n\leq x\\(n,k)=1}}f(n)\ll \frac{\varphi(k)}{k}\Big((M_{k}(f;x;T)+1)\exp(-M_{k}(f;x;T))+\frac{1}{T}\Big).\]
\end{lemma}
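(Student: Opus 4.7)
The proof will mirror that of Corollary 2.2 in \cite{BGS} almost verbatim; the only substantive change is to substitute the sharper version of the Halász-Montgomery-Tenenbaum inequality recorded as Corollary 1 of \cite{GS}, which carries the residual error $O(1/T)$ in place of the classical $O(1/\sqrt{T})$. Every other step transfers without modification, so I sketch only what is essentially new.

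The plan is to introduce the auxiliary multiplicative function $g(n):=f(n)1_{(n,k)=1}(n)$, which again satisfies $|g(n)|\le 1$. Since $g$ vanishes on all primes dividing $k$, a direct calculation of the pretentious distance yields
\[
\mathbb{D}(g, n^{it}; x)^2 = \mathbb{D}_{k}(f, n^{it}; x)^2 + \sum_{\substack{p\mid k\\ p\le x}}\frac{1}{p},
\]
because each prime $p\mid k$ contributes $1$ rather than $1-\text{Re}(f(p)p^{-it})$. Taking the infimum over $|t|\le T$ gives $M(g;x;T)=M_{k}(f;x;T)+A$, where $A:=\sum_{p\mid k}1/p$ (the restriction $p\le x$ is vacuous, as $k\le x$), and Mertens' identity supplies the key ratio $\varphi(k)/k\asymp \exp(-A)$.

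Applying \cite[Corollary 1]{GS} to $g$ then produces $\frac{1}{x}\bigl|\sum_{n\le x}g(n)\bigr|\ll (M(g;x;T)+1)e^{-M(g;x;T)}+1/T$, and rewriting with the distance identity above extracts the factor $\exp(-A)\asymp \varphi(k)/k$ from the pretentious part (after absorbing the additive $A$ via $Ae^{-A}=O(1)$ and a short case split on whether $A\lesssim M_{k}+1$ or not). The main obstacle is that the residual $1/T$ does not automatically carry this density factor; I handle it, as in \cite{BGS}, by coupling the Halász output with the trivial density bound
\[
\frac{1}{x}\Bigl|\sum_{\substack{n\le x\\(n,k)=1}}f(n)\Bigr|\le \frac{\#\{n\le x:(n,k)=1\}}{x} = \frac{\varphi(k)}{k}+O\!\Bigl(\frac{2^{\omega(k)}}{x}\Bigr),
\]
and taking the minimum of the two estimates. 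The constraints $T\le (\log x)^{1/4}$ and $k\le x$ ensure that the two error terms are compatible and that the minimization yields precisely the form claimed in the statement.
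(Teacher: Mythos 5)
The opening reduction---set $g=f\cdot 1_{(n,k)=1}$, compute $\mathbb{D}(g,n^{it};x)^2=\mathbb{D}_k(f,n^{it};x)^2+A$ with $A=\sum_{p\mid k}1/p$, and note $\varphi(k)/k\asymp e^{-A}$---is correct and is the right starting point. The gap is in the closing step. After feeding $g$ into \cite[Cor.\,1]{GS}, the residual term $1/T$ carries no factor of $\varphi(k)/k$, and taking the minimum with the trivial density bound $\approx\varphi(k)/k$ does not recover it: writing $\phi(u)=(1+u)e^{-u}$, the ratio
\[
\min\Bigl(\frac{\varphi(k)}{k},\ \phi(M_k+A)+\frac1T\Bigr)\bigg/\Bigl(\frac{\varphi(k)}{k}\Bigl[\phi(M_k)+\frac1T\Bigr]\Bigr)
\]
can be as large as $\min(T,e^A)$, which is unbounded under the stated hypotheses $k\le x$ and $T\le(\log x)^{1/4}$ (for instance $M_k\approx 50$, $A\approx 100$, $T\approx 10^{10}$ are compatible once $x$ is large enough, and then the ratio is about $10^{10}$). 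The proposed ``short case split'' on $A\lesssim M_k+1$ does not close this either: when $A\gg M_k+1$ the surviving piece $Ae^{-A}e^{-M_k}\ll e^{-M_k}$ still exceeds the target by a factor up to $e^{A}/(1+M_k)$.

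The difficulty is genuine because the lemma is sharp in exactly this regime. Taking $f(n)=n^{i\xi}$ with $\xi$ slightly larger than $T$ gives $\frac1x\bigl|\sum_{n\le x,(n,k)=1}f(n)\bigr|\asymp\frac{\varphi(k)}{k}\cdot\frac1T$, while $M_k$ can be made an arbitrary prescribed constant by tuning $\xi-T$; so any argument that only compares $\varphi(k)/k$ with $\phi(M_k+A)+1/T$ is necessarily lossy. The density $\varphi(k)/k$ has to be carried through the Hal\'asz mechanism itself: the mean square of the relevant Dirichlet polynomial over integers coprime to $k$ is already smaller by a factor $\varphi(k)/k$ because its coefficients live on a set of that density (this is precisely the saving recorded in Lemma~\ref{square integral large sieve for characters}). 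Invoking the Hal\'asz--Montgomery--Tenenbaum bound for $g$ as a black box and trying to divide out $\varphi(k)/k$ afterwards does not suffice, so the proof needs to be reworked from that point.
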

While for large $T$ in the above lemma, it follows directly from \cite[Lemma 2.2]{KMT} that
\begin{lemma}\label{similar lemma in KMT}
Let $x\geq 3$, $1\leq k\leq x$ and $(\log x)^{\frac{1}{4}}<T\leq x$. Let $f(n)$ be a multiplicative function with $|f(n)|\leq 1$ for all $n\in \mathbb{N}$. Then
\[\frac{1}{x}\sum_{\substack{n\leq x\\(n,k)=1}}f(n)\ll \frac{\varphi(k)}{k}\Big((M_{k}(f;x;T)+1)\exp(-M_{k}(f;x;T))+(\log x)^{-\frac{5}{64}}\Big).\]
\end{lemma}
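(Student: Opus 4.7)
The plan is to deduce the estimate directly from \cite[Lemma 2.2]{KMT}, which provides the analogous pointwise bound for a 1-bounded multiplicative function on $[1,x]$ without the coprimality restriction, expressed via the usual unrestricted Granville--Soundararajan pretentious distance. First I would pass to the 1-bounded multiplicative function $g(n):=f(n)1_{(n,k)=1}(n)$, so that
\[
\frac{1}{x}\sum_{\substack{n\leq x\\(n,k)=1}}f(n)=\frac{1}{x}\sum_{n\leq x}g(n),
\]
and then apply \cite[Lemma 2.2]{KMT} to $g$, obtaining an upper bound in terms of $M(g;x;T):=\inf_{|t|\leq T}\mathbb{D}(g,n\mapsto n^{it};x)^2$ where the distance $\mathbb{D}$ ranges over \emph{all} primes $p\leq x$.

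The next step is to translate $M(g;x;T)$ into $M_k(f;x;T)$. Since $g(p)=0$ when $p\mid k$ and $g(p)=f(p)$ otherwise, the distance decomposes as
\[
\mathbb{D}(g,n\mapsto n^{it};x)^2=\sum_{\substack{p\leq x\\p\mid k}}\frac{1}{p}+\mathbb{D}_k(f,n\mapsto n^{it};x)^2,
\]
which after taking infima gives $M(g;x;T)=\sum_{p\mid k}\frac{1}{p}+M_k(f;x;T)$ (the restriction $p\leq x$ being automatic because $k\leq x$). Combining this with the Mertens-type identity $\sum_{p\mid k}1/p=\log(k/\varphi(k))+O(1)$ produces $e^{-M(g;x;T)}\asymp(\varphi(k)/k)\,e^{-M_k(f;x;T)}$, which is precisely the factor $\varphi(k)/k$ appearing in the claimed bound.

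The only subtle point in the bookkeeping is that this substitution also inserts $\log(k/\varphi(k))$ additively into the $(M+1)$-factor, yielding on the right a term of the shape
\[
\frac{\varphi(k)}{k}\bigl(M_k(f;x;T)+\log(k/\varphi(k))+1\bigr)\exp(-M_k(f;x;T))+(\log x)^{-5/64}.
\]
The main obstacle is absorbing the extra additive $\log(k/\varphi(k))\ll \log\log x$. In the regime where $M_k(f;x;T)$ is at least an absolute constant, the shift is dominated by a constant multiple of $M_k+1$ and can be absorbed directly; in the regime where $M_k(f;x;T)=O(1)$, the quantity $(M_k+1)e^{-M_k}$ is bounded below, so one competes instead against the trivial bound
\[
\Bigl|\frac{1}{x}\sum_{\substack{n\leq x\\(n,k)=1}}f(n)\Bigr|\ll \frac{\varphi(k)}{k},
\]
which already matches the target form up to constants. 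The hypothesis $T>(\log x)^{1/4}$ is used solely to license the invocation of \cite[Lemma 2.2]{KMT} in its sharper (long-$T$) form; the structural reduction above then yields the stated inequality.
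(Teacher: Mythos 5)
Your proposal rests on the premise that \cite[Lemma 2.2]{KMT} is an \emph{unrestricted} Hal\'{a}sz-type bound (no coprimality condition, ordinary pretentious distance) to which one reduces by specialising to $g=f\cdot 1_{(n,k)=1}$. This misreads the citation: KMT's Lemma~2.2 is already formulated for mean values of $1$-bounded multiplicative functions over integers coprime to a modulus, with the density factor $\varphi(k)/k$ and the restricted distance $M_k$ built in. The paper's proof of the present lemma is therefore nothing more than a direct invocation; the hypothesis $T>(\log x)^{1/4}$ only serves to absorb KMT's $1/T$ error term into $(\log x)^{-5/64}$ (since $1/T<(\log x)^{-1/4}<(\log x)^{-5/64}$). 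No reduction of the kind you describe is carried out in the paper.

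Even granting your premise, the bookkeeping has a genuine gap. Put $L_k:=\sum_{p\mid k}1/p=\log(k/\varphi(k))+O(1)$. Your substitution gives
\[
(M_k+L_k+1)e^{-M_k-L_k}\asymp\frac{\varphi(k)}{k}\,(M_k+L_k+1)e^{-M_k},
\]
and matching the target's leading term requires $M_k+L_k+1\ll M_k+1$, i.e.\ $L_k\ll M_k+1$. But $L_k$ depends only on $k$ while $M_k$ depends on $f$, so neither controls the other: taking $k$ a primorial of size comparable to $x$ gives $L_k\asymp\log\log\log x\to\infty$, while $f$ can be chosen so that $M_k$ equals any fixed constant $C_0$. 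Your first regime (``$M_k$ at least an absolute constant, shift absorbed directly'') silently assumes $L_k=O(M_k)$, which fails here; your second regime (``$M_k=O(1)$, use the trivial bound $\ll\varphi(k)/k$'') only matches the target up to the implied constant when $(M_k+1)e^{-M_k}\gg 1$, i.e.\ again $M_k\leq C$. In the uncovered band $C<M_k\ll L_k$ you overshoot the first term of the target by a factor $\asymp L_k/(M_k+1)$, which is unbounded, and since in this band $e^{-M_k}\geq 1/\log\log x\gg(\log x)^{-5/64}$ the overshoot cannot be hidden in the error term. A separate problem is that the constant error $(\log x)^{-c}$ produced by an unrestricted Hal\'{a}sz bound does not acquire the $\varphi(k)/k$ prefactor under your reduction, while the target needs $\frac{\varphi(k)}{k}(\log x)^{-5/64}$; this is harmless only if $c$ is strictly larger than $5/64$, which you would need to justify.
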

Combining with Lemmas \ref{similar corollary in GS} and \ref{similar lemma in KMT}, we have the following Hal\'{a}sz-type inequality for the mean values of multiplicative functions.
\begin{lemma}\label{a halasz type inequality}
Let $x\geq 3$ and $1\leq k,T\leq x$. Let $f(n)$ be a multiplicative function with $|f(n)|\leq 1$ for all $n\in \mathbb{N}$. Then
\[\frac{1}{x}\sum_{\substack{n\leq x\\(n,k)=1}}f(n)\ll \frac{\varphi(k)}{k}\Big((M_{k}(f;x;T)+1)\exp(-M_{k}(f;x;T))+\frac{1}{T}+(\log x)^{-\frac{5}{64}}\Big).\]
\end{lemma}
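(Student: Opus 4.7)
The plan is that this result is simply the amalgamation of Lemmas \ref{similar corollary in GS} and \ref{similar lemma in KMT}, combined by splitting on the size of the parameter $T$ relative to the threshold $(\log x)^{1/4}$ that separates their respective ranges of validity. No new analytic input is required; the lemma is stated in its combined form purely for convenience of later citation.

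First I would split into two cases. In the range $1\le T\le (\log x)^{1/4}$, Lemma \ref{similar corollary in GS} directly yields
\[
\frac{1}{x}\sum_{\substack{n\leq x\\(n,k)=1}}f(n)\ll \frac{\varphi(k)}{k}\Big((M_{k}(f;x;T)+1)\exp(-M_{k}(f;x;T))+\tfrac{1}{T}\Big),
\]
which is trivially dominated by the claimed bound since $\tfrac{1}{T}\le \tfrac{1}{T}+(\log x)^{-5/64}$. In the complementary range $(\log x)^{1/4}<T\le x$, Lemma \ref{similar lemma in KMT} gives
\[
\frac{1}{x}\sum_{\substack{n\leq x\\(n,k)=1}}f(n)\ll \frac{\varphi(k)}{k}\Big((M_{k}(f;x;T)+1)\exp(-M_{k}(f;x;T))+(\log x)^{-5/64}\Big),
\]
which is again majorized by the claimed estimate since $(\log x)^{-5/64}\le \tfrac{1}{T}+(\log x)^{-5/64}$. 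In either case the right-hand side of the conclusion absorbs the corresponding bound.

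Since both cases exhaust the hypothesized range $1\le T\le x$, the two estimates combine to give the stated bound uniformly. There is no genuine obstacle here; the only subtlety is verifying that the two preceding lemmas together cover all $T\in[1,x]$ and that the weaker of the two error terms $\tfrac{1}{T}+(\log x)^{-5/64}$ dominates the error in each individual regime, which is immediate.
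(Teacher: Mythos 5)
Your proposal is correct and matches the paper's own route exactly: the paper introduces this lemma with the phrase ``Combining with Lemmas \ref{similar corollary in GS} and \ref{similar lemma in KMT}'' and gives no further argument, leaving precisely the case split on $T$ relative to $(\log x)^{1/4}$ that you spell out. Your write-up simply makes explicit the domination of each regime's error term by the combined term $\tfrac{1}{T}+(\log x)^{-5/64}$, which is the intended (and only) content of the deduction.
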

The following lemma follows immediately from Lemma \ref{a halasz type inequality} and partial summation.

\begin{lemma}\label{the point bound of Dirichlet polynomial}
Let $x\geq 3$ and $1\leq k,T_{0}\leq x$. Suppose that $\chi$ is a Dirichlet character modulo $k$. Let $f(n)$ be a multiplicative function with $|f(n)|\leq 1$, and let
\[F(\chi,s)=\sum_{x\leq n\leq 2x}\frac{f(n)\chi(n)}{n^{s}}.\]
Let
\begin{equation}\label{def of L}
L(f\chi; x;T_{0})=\inf_{|t_{0}|\leq T_{0}}\mathbb{D}_{k}(f\chi,n\mapsto n^{it+it_{0}};x)^2.
\end{equation}
Then
\[|F(\chi,\sigma+it)|\ll x^{1-\sigma}\frac{\varphi(k)}{k}\Big((L(f\chi;x;T_{0})+1)\exp(-L(f\chi; x;T_{0}))+\frac{1}{T_{0}}+(\log x)^{-\frac{5}{64}}\Big).\]
\end{lemma}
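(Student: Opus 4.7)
The plan is to deduce this pointwise Hal\'asz-type bound by applying Lemma \ref{a halasz type inequality} to an auxiliary multiplicative function and then passing to the Dirichlet polynomial by Abel summation.

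First I would introduce $g(n):=f(n)\chi(n)n^{-it}$, which is multiplicative with $|g(n)|\le 1$ and which vanishes whenever $(n,k)>1$ (since $\chi(n)=0$ in that case). The point is that for $y\ge x$,
\[
M_k(g;y;T_0)=\inf_{|s|\le T_0}\sum_{\substack{p\le y\\ p\nmid k}}\frac{1-\mathrm{Re}(f(p)\chi(p)p^{-i(t+s)})}{p}\ \ge\ L(f\chi;x;T_0),
\]
simply because restricting the prime range from $y$ down to $x$ drops nonnegative terms inside the infimum. Combined with the elementary fact that the map $M\mapsto (M+1)e^{-M}$ is monotone decreasing for $M\ge 0$, this lets us replace $M_k(g;y;T_0)$ in the conclusion of Lemma \ref{a halasz type inequality} by the (possibly smaller) quantity $L(f\chi;x;T_0)$, at the cost of only enlarging the upper bound.

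Next I would apply Lemma \ref{a halasz type inequality} to $g$ at each scale $y\in[x,2x]$ to obtain
\[
S(y):=\sum_{x\le n\le y}g(n)\ \ll\ y\cdot\frac{\varphi(k)}{k}\Big((L+1)e^{-L}+\tfrac{1}{T_0}+(\log y)^{-5/64}\Big),
\]
where $L=L(f\chi;x;T_0)$, and where I use $(\log y)^{-5/64}\le(\log x)^{-5/64}$ for $y\ge x\ge 3$ together with the trivial bound $|S(y)|\le|\sum_{n\le y}g(n)|+|\sum_{n<x}g(n)|$ to absorb the lower endpoint. Call the quantity in parentheses $E(x,T_0)$, so that $|S(y)|\ll y\,(\varphi(k)/k)\,E(x,T_0)$ uniformly for $x\le y\le 2x$.

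Finally, Abel summation gives
\[
F(\chi,\sigma+it)=\sum_{x\le n\le 2x}\frac{g(n)}{n^{\sigma}}=\frac{S(2x)}{(2x)^{\sigma}}+\sigma\int_{x}^{2x}\frac{S(u)}{u^{\sigma+1}}\,du,
\]
and substituting the pointwise bound on $S(u)$ yields
\[
|F(\chi,\sigma+it)|\ \ll\ x^{1-\sigma}\,\frac{\varphi(k)}{k}\,E(x,T_0),
\]
which is exactly the claimed inequality. No step here is really an obstacle; the only thing to watch is the monotonicity argument used to replace $M_k(g;y;T_0)$ by $L(f\chi;x;T_0)$ and the verification that $g$ is genuinely a $1$-bounded multiplicative function satisfying the hypotheses of Lemma \ref{a halasz type inequality}, so that its proof applies verbatim.
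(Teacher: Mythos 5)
Your argument is correct and is exactly the paper's (one-line) approach, which states the lemma "follows immediately from Lemma \ref{a halasz type inequality} and partial summation": twisting by $n^{-it}$, invoking the monotonicity of $M\mapsto(M+1)e^{-M}$ to replace $M_k(g;y;T_0)$ by $L(f\chi;x;T_0)$, and then Abel summation. The lower-endpoint term $\sum_{n<x}g(n)$ that you flag is handled exactly as you suggest (e.g.\ write it as $\sum_{n\le x}g(n)-g(x)$ and note $1\ll x\frac{\varphi(k)}{k}(\log x)^{-5/64}$ since $\varphi(k)/k\gg 1/\log x$), so there is no gap.
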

Actually, in the proof of Theorem \ref{mean value of Dirichlet polynomials for characters}, we also need to apply the Hal\'{a}sz-type inequality to a Dirichlet polynomial of the form $F_{v,H}(\chi,s)$ in Lemma \ref{fractorization} with the coefficients not quite multiplicative. Using Lemma \ref{the point bound of Dirichlet polynomial}, a similar argument to the proof of Lemma 3 in \cite{KM} gives the following result.
\begin{proposition}\label{a Halasz bound}
Let $X\geq Q> P\geq 2$. Let $1\leq k,T_{0}\leq X$ and $\chi$ be a Dirichlet character modulo $k$. Let $f(n)$ be a multiplicative function with $|f(n)|\leq 1$ and
\[R(\chi,s)=\sum_{X\leq n\leq 2X}\frac{f(n)\chi(n)}{n^{s}}\frac{1}{\#\{P\leq q\leq Q:q|m,q~ is~ a ~prime\}+1}.\]
Suppose that $\delta(n)$ is the characteristic function supported on the set of all integers between $1$ and $2X$ which is coprime to $\prod_{P\leq p\leq Q}p$.
Then for any $t$,
\begin{align*}
  |R(\chi,1+it)|\ll &\frac{\log Q}{\log P}\frac{\varphi(k)}{k}\Big((L(\delta f\chi;X;T_{0})+1)\exp(-L(\delta f\chi; X;T_{0}))+\frac{1}{T_{0}}+(\log x)^{-\frac{5}{64}}\Big)  \\
  &+(\log X)\exp(-\frac{\log X}{3\log Q}\log \frac{\log X}{\log Q}),
\end{align*}
where $L(\delta f\chi;X;T_{0})$ is defined as equation (\ref{def of L}).
\end{proposition}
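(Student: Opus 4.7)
The plan is to adapt the proof of \cite[Lemma~3]{KM} to the hybrid setting with Dirichlet character $\chi$ modulo $k$ and coefficients restricted to integers coprime to $k$. Throughout, let $\omega_{[P,Q]}(n)$ denote the number of distinct prime factors of $n$ lying in $[P,Q]$. The starting point is the integral identity
\[\frac{1}{\omega_{[P,Q]}(n)+1} = \int_0^1 u^{\omega_{[P,Q]}(n)}\, du,\]
which, introducing the multiplicative function $h_u$ with $h_u(p^a)=u$ for $P\le p\le Q$ and $h_u(p^a)=1$ otherwise, lets me rewrite
\[R(\chi,1+it) = \int_0^1 S_u(1+it)\, du, \qquad S_u(s):=\sum_{X\le n\le 2X}\frac{f(n)\chi(n)h_u(n)}{n^s}.\]
Since $fh_u\chi$ is multiplicative with $|fh_u\chi|\le 1$, Lemma~\ref{the point bound of Dirichlet polynomial} applies and gives, setting $L_u:=L(fh_u\chi;X;T_0)$,
\[|S_u(1+it)|\ll \frac{\varphi(k)}{k}\Big((L_u+1)e^{-L_u}+T_0^{-1}+(\log X)^{-5/64}\Big).\]

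The next step is to compare $L_u$ with $L:=L(\delta f\chi;X;T_0)$. Since $h_u$ and $\delta$ coincide on primes outside $[P,Q]$, a direct term-by-term computation yields
\[\mathbb{D}_k(fh_u\chi, n\mapsto n^{it+it_0};X)^2 - \mathbb{D}_k(\delta f\chi, n\mapsto n^{it+it_0};X)^2 = -u\sum_{\substack{P\le p\le Q\\ p\nmid k}}\frac{\mathrm{Re}(f(p)\chi(p)p^{-i(t+t_0)})}{p},\]
whose absolute value is at most $uM$ with $M:=\log(\log Q/\log P)+O(1)$. Taking infima over $|t_0|\le T_0$ produces $L_u\ge L-uM$. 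Since $x\mapsto(x+1)e^{-x}$ is decreasing on $[0,\infty)$, this yields a pointwise upper bound on $(L_u+1)e^{-L_u}$ in terms of $L-uM$.

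Integrating over $u\in[0,1]$, I will split according to whether $u\le L/M$ or $u>L/M$. In the first range, a linear change of variables gives the factor $\int_0^1 e^{uM}\,du \ll e^M/(M+1) \ll (\log Q/\log P)/(M+1)$, while in the second range the trivial bound $(L_u+1)e^{-L_u}\le 1$ combined with $(L+1)e^{-L}\ge (M+1)e^{-M}\gg (M+1)\log P/\log Q$ gives the same order. This produces
\[\int_0^1 (L_u+1)e^{-L_u}\, du \ll \frac{\log Q}{\log P}(L+1)e^{-L},\]
and since $\log Q/\log P\ge 1$, the remaining terms $T_0^{-1}$ and $(\log X)^{-5/64}$ can be multiplied by $\log Q/\log P$ without loss. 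This recovers the main term in the statement.

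The main obstacle is producing the secondary term $(\log X)\exp\big(-\frac{\log X}{3\log Q}\log\frac{\log X}{\log Q}\big)$, which does not arise from the pretentious estimate above. Following \cite[Lemma~3]{KM}, this accounts for the contribution of integers $n\in[X,2X]$ that are essentially $Q$-smooth, a regime in which Halász-type bounds are ineffective. Here one instead invokes a Rankin-type upper bound on smooth numbers, whose exponential decay is of Dickman type; the $1/3$ in the exponent corresponds to the usual choice of the Rankin parameter balanced against the weight $(\omega_{[P,Q]}(n)+1)^{-1}\le 1$, and the $\log X$ prefactor is picked up through partial summation over $n$. Combining the pretentious regime with this complementary bound completes the proof.
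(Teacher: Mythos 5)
Your approach is correct in its main steps but genuinely different from the one the paper sketches, and in fact proves a slightly stronger statement. The paper cites the argument of Matom\"aki--Radziwi{\l}{\l} \cite[Lemma~3]{KM}, which handles the weight $(\omega_{[P,Q]}(n)+1)^{-1}$ by a smooth/rough decomposition of $n$ and then controls the nearly $Q$-smooth tail by a Rankin-type estimate; it is that complementary regime which gives rise to the term $(\log X)\exp\bigl(-\tfrac{\log X}{3\log Q}\log\tfrac{\log X}{\log Q}\bigr)$. You instead linearize the weight through the identity $\tfrac{1}{\omega+1}=\int_0^1 u^{\omega}\,du$, apply Lemma~\ref{the point bound of Dirichlet polynomial} to each multiplicative function $f h_u\chi$ (with $|f h_u\chi|\le 1$), and transfer the resulting pretentious distance $L_u:=L(fh_u\chi;X;T_0)$ to $L:=L(\delta f\chi;X;T_0)$ via the correct term-by-term comparison on primes in $[P,Q]$, which yields $L_u\ge L-uM$ with $M=\sum_{P\le p\le Q,\,p\nmid k}1/p=\log\tfrac{\log Q}{\log P}+O(1)$. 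Since $x\mapsto (x+1)e^{-x}$ is decreasing on $[0,\infty)$ and $L_u\ge\max(L-uM,0)$, one gets pointwise $(L_u+1)e^{-L_u}\le e^{uM}(L+1)e^{-L}\le e^M(L+1)e^{-L}$ when $L\ge uM$, and $(L_u+1)e^{-L_u}\le 1\le e^M(L+1)e^{-L}$ when $L<uM$ (using $(L+1)e^{-L}\ge(M+1)e^{-M}$ there). Integrating over $u\in[0,1]$ and using $e^M\asymp \log Q/\log P$ already recovers the main term, and since $\log Q/\log P\ge 1$ the error terms $T_0^{-1}$ and $(\log X)^{-5/64}$ absorb harmlessly. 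This is cleaner than the paper's route: the integral identity covers all $n\in[X,2X]$ at once, so no separate treatment of smooth integers is needed at all, and the secondary Rankin-type term in the statement is simply not required by your argument.

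Two small things to fix. First, the displayed claim $\int_0^1 e^{uM}\,du\ll e^M/(M+1)$ is imprecise ($\int_0^1 e^{uM}\,du=(e^M-1)/M$, and the $1/(M+1)$ gain does not follow directly from your two-range split); you do not need it, since the pointwise estimate $(L_u+1)e^{-L_u}\le e^M(L+1)e^{-L}$ already suffices after integrating. Second, your final paragraph is internally inconsistent: you present the secondary term as ``the main obstacle'' requiring a separate Rankin-type argument on nearly $Q$-smooth $n$, but your own linearization has already bounded $R(\chi,1+it)$ in full, so there is nothing left for such an argument to handle. The correct conclusion to draw from your computation is that the proposition holds with the secondary term deleted; as stated it is of course still true, since adding a nonnegative quantity to an upper bound only weakens it. You should either note that your argument dispenses with the secondary term, or, if you wish to match the paper verbatim, observe that it can be trivially appended.
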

\section{Proof of Proposition \ref{main theorem2}}
In this section we shall first prove Proposition \ref{main theorem2}, which states that the average of a 1-bounded multiplicative function is small for almost all short arithmetic progressions when it it not $\chi(p)p^{it}$ pretentious. The proof of this result can be reduced to proving the following lemma.
\begin{lemma}\label{mean value of Dirichlet polynomials for characters}
Let $X$ be large enough such that $1\leq k\leq (\log X)^{1/32}$. Suppose that $2\leq h\leq X/k$. Let $f(n)$ be a multiplicative function with $|f(n)|\leq 1$ for all $n\geq 1$, and let \[F(\chi,s)=\sum_{X\leq n\leq 2 X}\frac{f(n)\chi(n)}{n^{s}}.\]
Then, for any $T\geq 1$,
\begin{align*}
\sum_{\chi(mod~k)}\int_{0}^{T}|F(\chi,1+it)|^2dt  \ll & \frac{\varphi(k)}{k}(\frac{\varphi(k)T}{X/h}+\frac{\varphi(k)}{k})\Big(\frac{k}{\varphi(k)}\frac{\log\log h}{\log h}+\frac{1}{(\log X)^{1/300}}\Big)\\
  &+\frac{\varphi^2(k)}{k^2}\Big(M_{k}(f;k;X;2X)+1)\exp(-M_{k}(f;k;X;2X)\Big).
\end{align*}
\end{lemma}
Some results used below are given in Appendices A and B. We first show that the above lemma implies Proposition \ref{main theorem2}.
\begin{proof}[Proof of Proposition \ref{main theorem2} (Assume that Lemma \ref{mean value of Dirichlet polynomials for characters} holds)]
By the Parseval bound stated in formula (\ref{parseval bound}) and Lemma \ref{mean value of Dirichlet polynomials for characters},
\begin{align*}
& \frac{1}{k^2h^2X}\sum_{\chi(mod~k)}\int_{X}^{2X}\Big|\sum_{n=x}^{x+hk}f(n)\chi(n)\Big|^2dx \\
\ll& \sum_{\chi(mod~k)}\int_{1}^{1+i\frac{X}{kh}}\Big|\sum_{X\leq m\leq 4X}\frac{f(m)\chi(m)}{m^s}\Big|^2|ds|\\
&+\max_{T\geq \frac{X}{kh}}\frac{X/kh}{T}\sum_{\chi(mod~k)}\int_{1+iT}^{1+2iT}\Big|\sum_{X\leq m\leq 4X}\frac{f(m)\chi(m)}{m^s}\Big|^2|ds|\\
\ll& \frac{\varphi^2(k)}{k^2}\Big(\frac{k}{\varphi(k)}\frac{\log \log h}{\log h}+\frac{1}{(\log X)^{1/300}}\Big)
  +\frac{\varphi^2(k)}{k^2}\frac{M_{k}(f;k;X;2X)+1}{\exp(M_{k}(f;k;X;2X))}.
\end{align*}
Hence
\begin{align*}
 &\sum_{\substack{a=1\\(a,k)=1}}^{k}\sum_{x=X}^{2X}\Big|\sum_{\substack{n=x\\n\equiv a(mod~k)}}^{x+hk}f(n)\Big|^2 \\
 =& \frac{1}{\varphi^2(k)}\sum_{\substack{a=1\\(a,k)=1}}^{k}\sum_{x=X}^{2X}\Big|\sum_{\chi(mod~k)}\overline{\chi}(a)\sum_{n=x}^{x+hk}f(n)\chi(n)\Big|^2\\
 =& \frac{1}{\varphi^2(k)}\sum_{x=X}^{2X}\sum_{\chi_{1},\chi_{2}(mod~k)}\Big(\sum_{\substack{a=1\\(a,k)=1}}^{k}\overline{\chi_{1}}(a)\chi_{2}(a)\Big)
  \Big(
  \sum_{n=x}^{x+hk}f(n)\chi_{1}(n)\Big)\Big(
  \sum_{n=x}^{x+hk}\overline{f}(n)\overline{\chi_{2}}(n)\Big)\\
 =& \frac{1}{\varphi(k)}\sum_{x=X}^{2X}\sum_{\chi(mod~k)}\Big|
  \sum_{n=x}^{x+hk}f(n)\chi(n)\Big|^2
  = \frac{1}{\varphi(k)}\sum_{\chi(mod~k)}\int_{X}^{2X}\Big|
  \sum_{n=x}^{x+hk}f(n)\chi(n)\Big|^2dx\\
  \ll&  h^2X\varphi(k)\Big(\frac{k}{\varphi(k)}\frac{\log \log h}{\log h}+\frac{1}{(\log X)^{1/300}}+\frac{M_{k}(f;k;X;2X)+1}{\exp(M_{k}(f;k;X;2X))}\Big).
\end{align*}
\end{proof}
Now we start to prove Lemma \ref{mean value of Dirichlet polynomials for characters}.
\begin{proof}[Proof of Lemma \ref{mean value of Dirichlet polynomials for characters}]
Since the hybrid mean value theorem (see Lemma \ref{square integral large sieve for characters}) gives the bound $O\Big(\frac{\varphi(k)}{k}(\frac{\varphi(k)T}{X}+\frac{\varphi(k)}{k})\Big)$, we can assume that $T\leq X$.
Let $\chi_{1}$ be the character modulo $k$ minimizing the distance $\inf_{|t|\leq 2X}\mathbb{D}_{k}(f\chi,n\mapsto n^{it};X)$. Let $t_{1}$ be the real number minimizing $\mathbb{D}_{k}(f\chi_{1},n\mapsto n^{it};X)$. Then for any $\chi(mod~k)$ and $|t|\leq 2X$,  $\mathbb{D}_{k}(f\chi,n\mapsto n^{it};X)\geq \mathbb{D}_{k}(f\chi_{1},n\mapsto n^{it_{1}};X)$. Next we claim that
for $\chi\neq \chi_{1}$ and any $t$ with $|t|\leq 2X$,
\begin{equation}\label{nonprincipal character estimate}
2\mathbb{D}_{k}(f\chi,n\mapsto n^{it};X)\geq(\frac{1}{\sqrt{3}}-\epsilon)\sqrt{\log\log X}+O(1)
\end{equation}
and for $\chi=\chi_{1}$ and $|t-t_{1}|\geq 1$,
\begin{equation}\label{principal character estimate}
2\mathbb{D}_{k}(f\chi_{1},n\mapsto n^{it};X)\geq(\frac{1}{\sqrt{3}}-\epsilon)\sqrt{\log\log X}+O(1),
\end{equation}
where $\epsilon>0$ is sufficiently small.
In fact, suppose first that $f$ is unimodular, i.e., $|f(n)|=1$ for all $n\geq 1$. By the triangle inequality of $\mathbb{D}_{k}$ (see, e.g., \cite[Lemma 3.1]{BGS07}),
\begin{align*}
 2\mathbb{D}_{k}(f\chi,n\mapsto n^{it};X)&\geq \mathbb{D}_{k}(f\chi,n\mapsto n^{it};X)+\mathbb{D}_{k}(f\chi_{1},n\mapsto n^{it_{1}};X) \\
  &=\mathbb{D}_{k}(\overline{f};n\mapsto \chi(n) n^{-it};X)+\mathbb{D}_{k}(f,n\mapsto\overline{\chi_{1}}(n) n^{it_{1}};X)\\
  &\geq \mathbb{D}_{k}(\overline{f}f, n\mapsto\overline{\chi_{1}}\chi(n) n^{i(t_{1}-t)};X)=\mathbb{D}_{k}(1, n\mapsto\overline{\chi_{1}}\chi(n) n^{i(t_{1}-t)};X).
\end{align*}
If $f$ is not unimodular, by means of the method used in \cite[Lemma 2.2]{KMT}, we can model $f$ by a stochastic multiplicative function $\textbf{f}$ such that $\{\textbf{f}(n)\}_{n}$ being a sequence of unimodular random variables defined on certain probability space, and for each prime $p$ the expectation $\mathbb{E}\textbf{f}(p)=f(p)$. By linearity of the expectation, we thus have
\[\mathbb{D}_{k}(f\chi,n\mapsto n^{it};X)^2=\sum_{\substack{p\leq x\\ p\nmid k}}\frac{1-\text{Re}(p^{-it}\chi(p)\mathbb{E}\textbf{f}(p))}{p}=\mathbb{E}\Big(\mathbb{D}_{k}(\textbf{f}\chi,n\mapsto n^{it};X)^2\Big).\]
Since $\textbf{f}$ is unimodular, $2\mathbb{D}_{k}(\textbf{f}\chi,n\mapsto n^{it};X)\geq \mathbb{D}_{k}(1, n\mapsto\overline{\chi_{1}}\chi(n) n^{i(t_{1}-t)};X)$. Hence formulas (\ref{nonprincipal character estimate}) and (\ref{principal character estimate}) hold.

Write $[0,T]=\mathcal{L}_{1}\cup\mathcal{L}_{2}$, where
\[\mathcal{L}_{1}=\{0\leq t\leq T: |t-t_{1}|< (\log X)^{\frac{5}{64}}\},\]
\[\mathcal{L}_{2}=\{0\leq t\leq T: |t-t_{1}|\geq (\log X)^{\frac{5}{64}}\}.\]
We now first estimate $\sum_{\chi(mod~k)}\int_{\mathcal{L}_{2}}|F(\chi,1+it)|^2dt$. By means of similar ideas in the proof of \cite[Proposition 1]{KM}, we first split the integral over $\mathcal{L}_{2}$ into several parts according to the typical factorization when $n$ is restricted to a dense subset $\mathcal{S}\subseteq [X,2X]$. Recall that $\mathcal{S}$ in \cite{KM} is defined to be the set of all integers $X\leq n\leq 2X$ having at least one prime factor in each interval $[P_{j},Q_{j}]$ for $j\leq J$, where $J$ is chosen to be the largest index $j$ such that $Q_{j}\leq \exp((\log X)^{\frac{1}{2}})$. The choice of $P_{j},Q_{j}$ needs to satisfy some requirements as in \cite{KM}.
Now we set the same parameters $\alpha_{j}:=\frac{1}{4}-\eta(1+\frac{1}{2j})$, $\eta:=1/150$, $H_{j}:=j^2 P_{1}^{\frac{1}{6}-\eta}/(\log Q_{1})^{\frac{1}{3}}$, $\mathcal{I}_{j}:=[v\in \mathbb{N}:\lfloor H_{j}\log P_{j}\rfloor\leq v\leq H_{j}\log  Q_{j}]$ as in \cite{KM}.
Define for $v\in \mathcal{I}_{j}$,
$$R_{v,H_{j}}(\chi,1+it):=\sum_{Xe^{-v/H_{j}}\leq m\leq 2Xe^{-v/H_{j}}}\frac{f(m)\chi(m)}{m^s}\frac{1}{\sharp\{P_{j}\leq p\leq Q_{j}:p|m\}+1}$$
and
$$Q_{v,H_{j}}(\chi,s):=\ds\sum_{\substack{P_{j}\leq q\leq Q_{j}\\ e^{v/H_{j}}\leq q\leq e^{(v+1)/H_{j}}}}\frac{f(q)\chi(q)}{q^s}.$$
Let $\mathcal{T}_{j}$ denote the set of all $(\chi,t)\in \{\chi(mod~k)\}\times \mathcal{L}_{2}$ with $j$ the smallest index such that for all $v\in \mathcal{I}_{j}$, $|Q_{v,H_{j}}(\chi,1+it)|\leq e^{-\alpha_{j}v/H_{j}}$. Let $\mathcal{U}$ be the complement of union of $\mathcal{T}_{j}$. We may also write that for some sets $\mathcal{T}_{j,\chi}$, $\mathcal{U}_{\chi}\subseteq \mathcal{L}_{2}$, $\mathcal{T}_{j}=\bigcup_{\chi(mod~k)}\{\chi\}\times \mathcal{T}_{j,\chi}$ and $\mathcal{U}=\bigcup_{\chi(mod~k)}\{\chi\}\times \mathcal{U}_{\chi}$. Then
\begin{equation}\label{split the integral into several subintegrals}
\sum_{\chi(mod~k)}\int_{\mathcal{L}_{2}}|F(\chi,1+it)|^2dt=\sum_{j=1}^{J}\sum_{\chi(mod~k)}\int_{\mathcal{T}_{j,\chi}}|F(\chi,1+it)|^2dt+\sum_{\chi(mod~k)}
\int_{\mathcal{U}_{\chi}}|F(\chi,1+it)|^2dt.
\end{equation}
By the the fundamental lemma of the sieve,
\[\sum_{\substack{X\leq m\leq 2X\\(m,k\prod_{P_{j}\leq p\leq Q_{j}}p)=1}}1\ll X\frac{\varphi(k)}{k}\frac{\log P_{j}}{\log Q_{j}}\prod_{\substack{P_{j}\leq p\leq Q_{j}\\p|k}}(1-\frac{1}{p})^{-1}\leq  X\frac{\varphi(k)}{k}\frac{\log P_{j}}{\log Q_{j}}\frac{k}{\varphi(k)}.\]
Using Lemma \ref{fractorization} with $H=H_{j},P=P_{j},Q=Q_{j}$ and $a_{m}=b_{m}=f(m)\chi(m),c_{p}=f(p)\chi(p)$ and the above inequality,
we obtain
\begin{align*}
  \sum_{\chi(mod~k)}\int_{\mathcal{T}_{j,\chi}}|F(\chi,1+it)|^2dt & \ll H_{j}\log(\frac{Q_{j}}{P_{j}})\sum_{\chi(mod~k)}\sum_{v\in \mathcal{I}_{j}}\int_{\mathcal{T}_{j,\chi}}|Q_{v,H}(\chi,1+it)R_{v,H}(\chi, 1+it)|^2dt \\
  &+\frac{\varphi(k)}{k}\frac{\varphi(k)T+(\varphi(k)/k)X}{X}\Big(\frac{1}{H_{j}}+\frac{1}{P_{j}}+\frac{k}{\varphi(k)}\frac{\log P_{j}}{\log Q_{j}}\Big)
\end{align*}
Here the second term contributes totally to the right-hand side of formula (\ref{split the integral into several subintegrals}),
\begin{align}\label{eq111}
\begin{aligned}
&\frac{\varphi(k)}{k}\frac{\varphi(k)T+(\varphi(k)/k)X}{X}\sum_{j=1}^{J} \Big(\frac{1}{H_{j}}+\frac{1}{P_{j}}+\frac{k}{\varphi(k)}\frac{\log P_{j}}{\log Q_{j}} \Big) \\
\ll & \frac{\varphi(k)}{k}\frac{\varphi(k)T+(\varphi(k)/k)X}{X}\Big(\frac{(\log Q_{1})^{\frac{1}{3}}}{P_{1}^{\frac{1}{6}-\eta}} +\sum_{j=1}^{J}\frac{1}{P_{1}^{j^2}}
+\frac{k}{\varphi(k)}\frac{\log P_{1}}{\log Q_{1}}\Big) \\
\ll& \frac{\varphi(k)}{k}\frac{\varphi(k)T+(\varphi(k)/k)X}{X}
\Big(\frac{(\log Q_{1})^{\frac{1}{3}}}{P_{1}^{\frac{1}{6}-\eta}}+\frac{k}{\varphi(k)}\frac{\log P_{1}}{\log Q_{1}} \Big).
\end{aligned}
\end{align}
In the above, we use the relation that $H_{j}=j^2P_{1}^{\frac{1}{6}-\eta}/(\log Q_{1})^{\frac{1}{3}}$ and $\log P_{j}\geq 8j^2/\eta\log Q_{j-1}+16j^2/\eta\log j$.

Now for $1\leq j\leq J$, we focus on bounding
\[E_{j}:=H_{j}\log Q_{j}\sum_{\chi(mod~k)}\sum_{v\in \mathcal{\mathcal{I}}_{j}}\int_{\mathcal{T}_{j,\chi}}|Q_{v,H_{j}}(\chi,1+it)R_{v,H_{j}}(\chi,1+it)|^2dt.\]
\textbf{Estimate of $E_{1}$.} We repeat the argument in \cite[Section 8.1]{KM} with the difference that the standard mean-value theorem is replaced by the ``hybrid mean-value theorem" (Lemma \ref{square integral large sieve for characters}),
\begin{equation}\label{contribution from E1}
E_{1}\ll (\frac{\varphi(k)T}{X/Q_{1}}+\frac{\varphi(k)}{k})\frac{(\log Q_{1})^{\frac{1}{3}}}{P_{1}^{\frac{1}{6}-\eta}}\frac{\varphi(k)}{k}.
\end{equation}
\textbf{Estimate of $E_{j}$~for~$2\leq j\leq J$.} Let $\mathcal{T}_{j,\chi}^{r}=\{t\in \mathcal{T}_{j,\chi},|Q_{r,H_{j-1}}(\chi,1+it)|> e^{-\frac{r\alpha_{j-1}}{H_{j-1}}}\}$ for $r\in \mathcal{I}_{j-1}$. Then $\mathcal{T}_{j,\chi}=\bigcup_{r\in \mathcal{I}_{j-1}}\mathcal{T}_{j,\chi}^{r}.$
If $\mathcal{T}_{j,\chi}^{r}=\emptyset$, we set $\int_{T_{j,\chi}^{r}}|Q_{r,H_{j-1}}(\chi, 1+it)|^2dt=0$.
Then $$E_{j}\ll H_{j}\log Q_{j}\sum_{v\in \mathcal{I}_{j}}\sum_{r\in \mathcal{I}_{j-1}}\sum_{\chi(mod~k)}e^{-2\frac{\alpha_{j}v}{H_{j}}}\ds\int_{\mathcal{T}_{j,\chi}^{r}}|R_{v,H_{j}}(\chi,1+it)|^2dt.$$
By an argument similar to \cite[Section 8.2]{KM} and Lemmas \ref{square integral large sieve for characters}, \ref{Moment computation}, we obtain
\begin{equation}\label{contribution from Ej}
E_{j}\ll \frac{\varphi(k)}{k}(\frac{\varphi(k)T}{X}+\frac{\varphi(k)}{k})\frac{1}{j^2P_{1}}.
\end{equation}
\textbf{Estimate of $\sum_{\chi(mod~k)}\int_{\mathcal{U}_{\chi}}|F(\chi,1+it)|^2dt$}.
Let $P=\exp((\log X)^{\frac{63}{64}}),Q=\exp(\frac{\log X}{\log\log X}),H=(\log X)^{\frac{1}{64}}$.
Set $\mathcal{I}=[\lfloor H\log P\rfloor, H\log  Q]$.
For $v\in \mathcal{I}$, write
\[Q_{v,H}(\chi,s)=\sum_{\substack{P\leq p\leq Q\\ e^{v/H}\leq p \leq e^{(v+1)/H}}}\frac{f(p)\chi(p)}{p^s},\]
and
\[R_{v,H}(\chi,s)=\sum_{Xe^{-v/H}\leq n \leq 2Xe^{-v/H}}\frac{f(n)\chi(n)}{n^s}\frac{1}{\sharp\{p\in [P,Q]:p|n\}+1}.\]
Note that $k< \log X$ and then $(k,\prod_{P\leq p\leq Q}p)=1$. Applying Lemma \ref{fractorization} with $a_{m}=b_{m}=f(m)\chi(m),c_{p}=f(p)\chi(p)$, we have that for some $v_{0}\in \mathcal{I}$,
\begin{align*}
  &\sum_{\chi(mod~k)}\int_{\mathcal{U}_{\chi}}\Big|\sum_{X\leq m\leq 2X}\frac{f(m)\chi(m)}{m^{1+it}}\Big|^2dt \\
  \ll&  H^2\log^2(\frac{Q}{P}) \sum_{\chi(mod~k)}\int_{\mathcal{U}_{\chi}}|Q_{v_{0},H}(\chi,1+it)R_{v_{0},H}(\chi,1+it)|^2dt\\
 +& \frac{\varphi(k)}{k}\frac{\varphi(k)T+(\varphi(k)/k)X}{X}\Big(\frac{1}{H}+\frac{1}{P}\Big)+\frac{\varphi(k)T+(\varphi(k)/k)X}{X}\frac{\log P}{\log Q}\frac{\varphi(k)}{k}.
\end{align*}
Recall that $\mathcal{W}\subseteq [0,T]$ is called a set of well-spaced points if for any $t_{1},t_{2}\in \mathcal{W}$, we have $|t_{1}-t_{2}|\geq 1.$
There is a well-spaced set $\mathcal{L}_{\chi}\subseteq \mathcal{U}_{\chi}$ such that
$$\int_{\mathcal{U}_{\chi}}|Q_{v_{0},H}(\chi,1+it)R_{v_{0},H}(\chi,1+it)|^2dt\ll \sum_{t\in \mathcal{L}_{\chi}}|Q_{v_{0},H}(\chi,1+it)R_{v_{0},H}(\chi,1+it)|^2.$$
Let \[\mathcal{U}'=\bigcup_{\chi(mod~k)}\{\chi\}\times \mathcal{L}_{\chi}.\]
Since $\log P_{J}-1\geq \frac{4j^2}{\eta}\log\log Q_{J+1}\geq \frac{2}{\eta}\log\log X$, $P_{J}>(\log X)^{\frac{2}{\eta}}$. By definition of $\mathcal{U}'$, for each $(\chi,t)\in \mathcal{U}'$,  there is a $v\in \mathcal{I}_{J}$ such that $|Q_{v,H_{J}}(\chi,1+it)|>e^{-\alpha_{J}v/H_{J}}$.
By Lemma \ref{Basic large values estimate-prime support},
\[|\mathcal{\mathcal{U}'}|\ll|\mathcal{I}_{J}|(kT)^{2\alpha_{J}+o(1)}(kT)^{\eta}X^{o(1)}\ll T^{\frac{1}{2}-\eta}X^{o(1)}.\]
We now also consider separately the cases
\[\mathcal{U}_{S}:=\{(\chi,t)\in \mathcal{U}':|Q_{v_{0},H}(\chi,1+it)|<(\log X)^{-100}\},\]
\[\mathcal{U}_{L}:=\{(\chi,t)\in \mathcal{U}':|Q_{v_{0},H}(\chi,1+it)|\geq (\log X)^{-100}\}.\]
For $\mathcal{U}_{S}$, applying Lemma \ref{Discrete large sieve for characters},
\begin{align*}
  &\sum_{(\chi,t)\in \mathcal{U}_{S}}|Q_{v_{0},H}(\chi,1+it)R_{v_{0},H}(\chi,1+it)|^2dt \\
 \ll & \frac{1}{(\log X)^{200}}\sum_{(\chi,t)\in \mathcal{U}_{S}}|R_{v_{0},H}(\chi,1+it)|^2\\
  \ll& \frac{1}{(\log X)^{200}}(Xe^{-v/H}+|\mathcal{U}_{S}|(kT)^{\frac{1}{2}})(\log 2kT)\frac{1}{Xe^{-v/H}}\ll\frac{1}{(\log X)^{199}}.
\end{align*}
Now it remains to estimate
\[\sum_{(\chi,t)\in \mathcal{U}_{L}}|Q_{v_{0},H}(\chi,1+it)R_{v_{0},H}(\chi,1+it)|^2.\]
By Lemma \ref{Basic large values estimate-prime support}, we obtain $|\mathcal{U}_{L}|\leq \exp((\log X)^{1/64+o(1)})$.
We now give a pointwise bound to $R_{v_{0},H}(\chi,1+it)$ for $(\chi,t)\in \mathcal{U}'$ as follows.
\begin{equation}\label{the pointwise bound in case3}
\max_{(\chi,t)\in \mathcal{U}_{L}}|R_{v_{0},H}(\chi, 1+it)|\ll \frac{\varphi(k)}{k}(\log X)^{-\frac{1}{16}+o(1)}\frac{\log Q}{\log P}.
\end{equation}
We mainly use Proposition \ref{a Halasz bound} to prove the above inequality. Suppose $\delta(n)=1_{(n,\prod_{P\leq p\leq Q}p)=1}(n)$. Then
\begin{align}\label{the bound for f times a characteristic function}
\begin{aligned}
\mathbb{D}_{k}(f\delta\chi,n\mapsto n^{it};X)^2=&\sum_{\substack{p\leq x\\ p\nmid k}}\frac{1-\text{Re}(p^{-it}\delta(p)\chi(p)f(p))}{p} \geq \mathbb{D}_{k}(f\chi,n\mapsto n^{it};X)^2-\sum_{p=P}^{Q}\frac{1}{p}\\
>& \mathbb{D}_{k}(f\chi,n\mapsto n^{it};X)^2-\frac{1}{64}\log\log X.
\end{aligned}
\end{align}
By bounds (\ref{nonprincipal character estimate}) and (\ref{principal character estimate}),
for $\chi\neq \chi_{1}$ and any $t$ with $|t|\leq 2X$,
\begin{equation}\label{nonprincipal character estimate with delta}
\mathbb{D}_{k}(f\delta\chi,n\mapsto n^{it};X)^2> (1/16)\log\log X
\end{equation}
and for $\chi=\chi_{1}$ and $|t-t_{1}|\geq 1$,
\begin{equation}\label{principal character estimate with delta}
\mathbb{D}_{k}(f\delta\chi_{1},n\mapsto n^{it};X)^2>(1/16)\log\log X.
\end{equation}
Hence applying the above bounds and Proposition \ref{a Halasz bound} with $T_{0}=\frac{1}{2}(\log X)^{\frac{5}{64}}$, we conclude
\begin{equation}\label{Halasz bound with nonprincipal character estimate}
\max_{\substack{\chi(mod~k)\\ \chi\neq \chi_{1}}}\max_{|t|\leq X}|R_{v_{0},H}(\chi, 1+it)|\ll \frac{\varphi(k)}{k}(\log X)^{-\frac{1}{16}+o(1)}\frac{\log Q}{\log P}.
\end{equation}
and for $\chi=\chi_{1}$,
\begin{equation}\label{Halasz bound with principal character estimate}
\max_{|t|\leq X,|t-t_{1}|\geq (\log x)^{\frac{5}{64}}}|R_{v_{0},H}(\chi_{1}, 1+it)|\ll \frac{\varphi(k)}{k}(\log X)^{-\frac{1}{16}+o(1)}\frac{\log Q}{\log P}.
\end{equation}
Note that $\mathcal{U}_{L}\subseteq \mathcal{L}_{2}=\{t\in [0,T]:|t-t_{1}|\geq (\log X)^{\frac{5}{64}}\}$. Hence we obtain formula (\ref{the pointwise bound in case3}). Based on the Hal\'{a}sz bound (\ref{the pointwise bound in case3}) and the condition that $k\leq (\log X)^{1/32}$, it follows, from the similar process in \cite[Section 8.3]{KM} with the Hal\'{a}sz inequality for primes replaced by a hybrid version of it (Lemma \ref{hybrid version of Halasz inequality for primes}), that
\begin{equation}\label{contribution from uchi}
\sum_{\chi(mod~k)}\int_{\mathcal{U}_{\chi}}\Big|\sum_{X\leq m\leq 2X}\frac{f(m)\chi(m)}{m^{1+it}}\Big|^2dt\ll\frac{\varphi(k)}{k}(\varphi(k)T/X+(\varphi(k)/k))(\log X)^{-\frac{1}{64}+o(1)}.
\end{equation}
Combining bounds (\ref{eq111}), (\ref{contribution from E1}), (\ref{contribution from Ej}), (\ref{contribution from uchi}) with formula (\ref{split the integral into several subintegrals}), we obtain
\begin{equation}\label{contribution from the first term}
\sum_{\chi(mod~k)}\int_{\mathcal{L}_{2}}|F(\chi,1+it)|^2dt \ll  \frac{\varphi(k)}{k}(\frac{\varphi(k)T}{X/Q_{1}}+\frac{\varphi(k)}{k})\Big(\frac{(\log Q_{1})^{\frac{1}{3}}}{P_{1}^{\frac{1}{6}-\eta}}+\frac{k}{\varphi(k)}\frac{\log P_{1}}{\log Q_{1}}+\frac{1}{(\log X)^{\frac{1}{65}}}\Big).
\end{equation}
Thanks to equation (\ref{Halasz bound with nonprincipal character estimate}), an argument similar to the proof of (\ref{contribution from the first term}) leads to
\begin{equation}\label{contribution from the second term}
\sum_{\substack{\chi(mod~k)\\ \chi\neq\chi_{1}}}\int_{\mathcal{L}_{1}}|F(\chi,1+it)|^2dt \ll \frac{\varphi(k)}{k}(\frac{\varphi(k)T}{X/Q_{1}}+\frac{\varphi(k)}{k})\Big(\frac{(\log Q_{1})^{\frac{1}{3}}}{P_{1}^{\frac{1}{6}-\eta}}+\frac{k}{\varphi(k)}\frac{\log P_{1}}{\log Q_{1}}+\frac{1}{(\log X)^{\frac{1}{65}}}\Big).
\end{equation}
Now we are just left with estimating
\[\int_{\mathcal{L}_{1}}|F(\chi_{1},1+it)|^2dt.\]
We first assume that
\begin{equation}\label{assumption}
(M_{k}(f\chi_{1};X;2X)+1)\exp(-M_{k}(f\chi_{1};X;2X))>(\log X)^{-\frac{5}{64}}.
\end{equation}
Now we write $\mathcal{L}_{1}=\mathcal{L}_{0,1}\cup \mathcal{L}_{0,2}$ as a disjoint union, where
\[\mathcal{L}_{0,1}=\{t\in \mathcal{L}_{1}: |t-t_{1}|< (M_{k}(f\chi_{1};X;2X)+1)^{-1}\exp(M_{k}(f\chi_{1};X;2X))\},\] and
\[\mathcal{L}_{0,2}=\{t\in \mathcal{L}_{1}: (M_{k}(f\chi_{1};X;2X)+1)^{-1}\exp(M_{k}(f\chi_{1};X;2X))\leq |t-t_{1}|\leq (\log X)^{\frac{5}{64}}\}.\]
For $t\in \mathcal{L}_{0,1}$, by Lemma \ref{the point bound of Dirichlet polynomial} with $T_{0}=(\log X)^{\frac{5}{64}}$, we have for $|t|\leq T\leq X$,
\[F(\chi_{1},1+it)\ll\frac{\varphi(k)}{k}(M_{k}(f\chi_{1};X;2X)+1)\exp(-M_{k}(f\chi_{1};X;2X)).\]
For $t\in \mathcal{L}_{0,2}$, by Lemma \ref{the point bound of Dirichlet polynomial} with $T_{0}=\frac{|t-t_{1}|}{2}$, we have for $|t|\leq T\leq X$,
\[F(\chi_{1},1+it)\ll \frac{\varphi(k)}{k}\frac{1}{|t-t_{1}|},\]
this is because that $(M_{k}(f\chi_{1};X;2X)+1)^{-1}\exp(M_{k}(f\chi_{1};X;2X))\geq 1$ and $(L(f\chi_{1}; X;T_{0})+1)\exp(-L(f\chi_{1}; X; T_{0}))\ll (\log X)^{-\frac{1}{12}+o(1)}$ by equation (\ref{principal character estimate}).
Hence
\begin{equation}\label{contribution from the third term}
\int_{\mathcal{L}_{1}}|F(\chi_{1},1+it)|^2dt\ll \frac{\varphi^2(k)}{k^2}(M_{k}(f\chi_{1};X;2X)+1)\exp(-M_{k}(f\chi_{1};X;2X)).
\end{equation}
Note that $M_{k}(f\chi_{1};X;2X)=M_{k}(f;k;X;2X)$. Therefore, collecting equations (\ref{contribution from the first term}), (\ref{contribution from the second term}) and (\ref{contribution from the third term}), we conclude that
\begin{align}\label{return formula}
\begin{aligned}
 \sum_{\chi(mod~k)}\int_{0}^{T}|F(\chi,1+it)|^2dt \ll&\frac{\varphi(k)}{k}(\frac{\varphi(k)T}{X/Q_{1}}+\frac{\varphi(k)}{k})\Big(\frac{(\log Q_{1})^{\frac{1}{3}}}{P_{1}^{\frac{1}{6}-\eta}}+\frac{k}{\varphi(k)}\frac{\log P_{1}}{\log Q_{1}}+\frac{1}{(\log X)^{\frac{1}{65}}}\Big)\\
  &+\frac{\varphi^2(k)}{k^2}(M_{k}(f;k;X;2X)+1)\exp(-M_{k}(f;k;X;2X)).
\end{aligned}
\end{align}

If condition (\ref{assumption}) does not hold, then $M_{k}(f\chi_{1};X;2X)\geq (5/64-o(1))\log\log X$. So equation (\ref{principal character estimate with delta}) holds for any $|t|\leq 2X$ by equation (\ref{the bound for f times a characteristic function}). Further using (\ref{nonprincipal character estimate with delta}) and Proposition \ref{a Halasz bound} with $T_{0}=(\log X)^{\frac{1}{16}}$,
\[\max_{\chi(mod~k)}\max_{|t|\leq X}|R_{v_{0},H}(\chi, 1+it)|\ll \frac{\varphi(k)}{k}(\log X)^{-\frac{1}{16}+o(1)}\frac{\log Q}{\log P}.\]
By the above pointwise bound, an argument similar to the proof of equation (\ref{contribution from the first term}) leads to
\[\sum_{\chi(mod~q)}\int_{0}^{T}|F(\chi,1+it)|^2dt \ll  \frac{\varphi(k)}{k}(\frac{\varphi(k)T}{X/Q_{1}}+\frac{\varphi(k)}{k})\Big(\frac{(\log Q_{1})^{\frac{1}{3}}}{P_{1}^{\frac{1}{6}-\eta}}+\frac{k}{\varphi(k)}\frac{\log P_{1}}{\log Q_{1}}+\frac{1}{(\log X)^{\frac{1}{65}}}\Big),\]
which implies formula (\ref{return formula}).

Note that $\eta=\frac{1}{150}$. In case $h\leq \exp((\log X)^{1/2})$, we choose $Q_{1}=h$ and $P_{1}=(\log h)^{\frac{40}{\eta}}$; in case $\exp((\log X)^{\frac{1}{2}})\leq h\leq X$, we choose $Q_{1}=\exp((\log X)^{\frac{1}{2}})$, $P_{1}=Q_{1}^{(1/4)(\log h)^{-1/100}}$. Hence from the formula (\ref{return formula}), we obtain the inequality in the statement of this lemma.
\end{proof}

\end{document}